\newtheorem{theorem}{Theorem}[section]
\newtheorem{lemma}[theorem]{Lemma}
\newtheorem{proposition}[theorem]{Proposition}
\newtheorem{corollary}[theorem]{Corollary}
\theoremstyle{definition}
\newtheorem{definition}[theorem]{Definition}
\newtheorem{remark}[theorem]{Remark}
\newtheorem{example}[theorem]{Example}
\newcommand{\R}{\mathbb{R}}
\newcommand{\IC}{\mathbb{C}}
\newcommand{\IN}{\mathbb{N}}
\newcommand{\Res}{\mathcal{R}}
\newcommand{\Ext}{\mathcal{E}}
\newcommand{\Pro}{\mathcal{P}}
\newcommand{\Proo}{\mathcal{Q}}
\newcommand{\cM}{\mathcal{M}}
\newcommand{\cL}{\mathcal{L}}
\newcommand{\cH}{\mathcal{H}}
\newcommand{\cE}{\mathcal{E}}
\let\SS\S	%Keeping the paragraph symbol
\renewcommand{\S}{\mathcal{S}}
\renewcommand{\L}{\mathrm{L}}
\newcommand{\C}{\mathrm{C}}
\newcommand{\B}{\mathrm{B}}
\renewcommand{\H}{\mathrm{H}}
\newcommand{\W}{\mathrm{W}}
\newcommand{\Sec}{\mathrm{S}}
\let\ii\i
\renewcommand{\i}{\mathrm{i}}
\renewcommand{\d}{\,\mathrm{d}}
\newcommand{\rad}{\mathrm{r}}
\newcommand{\len}{\ell}
\newcommand{\eps}{\varepsilon}
\renewcommand\Re{\operatorname{Re}}
\newcommand{\SP}{\, |\,}
\newcommand{\bO}{\boldsymbol{O}}
\newcommand{\bD}{\boldsymbol{D}}
\newcommand{\bN}{\boldsymbol{N}}
\newcommand{\bL}{\boldsymbol{L}}
\newcommand{\bA}{\boldsymbol{A}}
\newcommand{\bb}{\boldsymbol{b}}
\newcommand{\bc}{\boldsymbol{c}}
\newcommand{\bbd}{\boldsymbol{d}}
\newcommand{\bPiB}{\boldsymbol{\Pi_B}}
\newcommand{\bGamma}{{\boldsymbol{\Gamma}}}
\newcommand{\cl}[1]{\overline{#1}}
\renewcommand\div{\operatorname{div}}
\DeclareMathOperator{\bd}{\partial \!}
\DeclareMathOperator{\supp}{supp}
\DeclareMathOperator{\dist}{d}
\DeclareMathOperator{\diam}{diam}
\DeclareMathOperator{\Rg}{\mathsf{R}}
\DeclareMathOperator{\Ke}{\mathsf{N}}
\DeclareMathOperator{\dom}{\mathsf{D}}
\DeclareMathOperator{\codim}{codim}
\def\XXint#1#2#3{{\setbox0=\hbox{$#1{#2#3}{%
\int}$ }
\vcenter{\hbox{$#2#3$ }}\kern-.6\wd0}}
\title[The Kato square root problem on locally uniform domains] {The Kato square root problem on locally uniform domains}
\author{Sebastian Bechtel}
\author{Moritz Egert}
\author{Robert Haller-Dintelmann} 
\address{Fachbereich Mathematik, Technische Universit\"at Darmstadt, Schlossgartenstr. 7, 64289 Darmstadt, Germany}
\email{bechtel@mathematik.tu-darmstadt.de}
\email{haller@mathematik.tu-darmstadt.de}
\address{Universit\'e Paris-Saclay, CNRS, Laboratoire de Math\'{e}matiques d'Orsay, 91405 Orsay, France}
\email{moritz.egert@universite-paris-saclay.fr}
\subjclass[2010]{Primary: 47A60, 35J47. Secondary: 46E35, 26A33.}
\date{\today}
\dedicatory{}
\keywords{Kato square root problem, fractional Laplacian, functional calculus, locally uniform domains, Ahlfors--David regular sets}
\begin{document}
\begin{abstract}

We obtain the Kato square root estimate for second order elliptic operators in divergence form with mixed boundary conditions on an open and possibly unbounded set in $\R^d$ under two simple geometric conditions: The Dirichlet boundary part is Ahlfors--David regular and a quantitative connectivity property in the spirit of locally uniform domains holds near the Neumann boundary part. This improves upon all existing results even in the case of pure Dirichlet or Neumann boundary conditions. We also treat elliptic systems with lower order terms. As a side product we establish new regularity results for the fractional powers of the Laplacian with boundary conditions in our geometric setup.
\end{abstract}
\maketitle
%%%%%%%%%%%%%%%%%%%%%%%%%%%%%%%%%%%%%%%%%%%%%%%%%%%%%%%%%%%%%%%%%%%%%%%%%%%%%%%%%%%%%%%%%%%%%%%%%%%%%%%%%%%%%%%%%%%%%%%%%%%%%%%%%%%%%%%%%%%%%%%%%%%%%%%%%%%%%%%%%%%%
\section{Introduction and main results}
\label{Sec: Introduction}

Let $L$ be a second order elliptic operator in divergence form on an open, possibly unbounded set $O \subseteq \R^d$, $d \geq 2$, with bounded measurable complex coefficients, formally given by
\begin{align}
\label{L}
Lu = - \sum_{i,j=1}^d \partial_i (a_{ij} \partial_ju) - \sum_{i=1}^d \partial_i(b_{i} u) + \sum_{j=1}^d c_{j} \partial_ju + du.
\end{align}
Let $D$ be a closed, possibly empty, subset of the boundary $\bd O$. We complement $L$ with Dirichlet boundary conditions on $D$ and Neumann boundary conditions on $\bd O \setminus D$. More generally, $L$ can be an $(m \times m)$-system in which case $u$ takes its values in $\IC^m$ and each coefficient is valued in $\cL(\IC^m)$.

Let $V \coloneqq \W^{1,2}_D(O)^m$ be the $\W^{1,2}(O)^m$-closure of smooth functions that vanish in a neighborhood of $D$ (Definition~\ref{Def: W12D}). The superscript $m$ indicates that we consider $\IC^m$-valued functions. As usual, we interpret $L$ as the maximal accretive operator in $\L^2(O)^m$ associated with the sesquilinear form $a: V \times V \to \IC$ defined by
\begin{align}
\label{a}
 a(u,v) = \int_O \sum_{i,j=1}^d   a_{ij} \partial_j u \cdot \cl{\partial_i v} + \sum_{i=1}^d  b_{i} u \cdot \cl{\partial_i v} + \sum_{j=1}^d   c_{j} \partial_j u \cdot \cl{v} + d u \cdot \cl{v}  \d x,
\end{align}
which we assume to satisfy for some $\lambda > 0$ the (inhomogeneous) G\r{a}rding inequality
\begin{align}
\label{Garding}
\Re a(u,u) \geq  \lambda (\|u\|_2^2 + \|\nabla u\|_2^2) \qquad \mathrlap{(u \in V).}
\end{align}
Then $L$ is invertible and there is a unique maximal accretive operator $\sqrt{L}$ in $\L^2(O)^m$ that satisfies $(\sqrt{L})^2 = L$. We give a more detailed account in Section~\ref{Sec: Elliptic operator}. More generally, operators like $\sqrt{L}$ arise from functional calculus. We assume familiarity with this concept but for convenience we supply the necessary background for understanding this paper in Appendix~\ref{Sec: Functional calculus for (bi)sectorial operators}. Further (non-)standard notation is explained in a separate paragraph at the end of this introduction.

The Kato problem is to identify the domain of the square root operator as $\dom(\sqrt{L}) = V$ with equivalent norms. If $L$ is self-adjoint, then this essentially follows from the (formal) calculation
\begin{align}
\label{Kato sa calculation}
\|u\|_V^2 \approx a(u,u) = (Lu \SP u)_2 = (\sqrt{L} u \SP (\sqrt{L})^\ast u)_2  = (\sqrt{L} u \SP \sqrt{L^\ast} u)_2 = \|\sqrt{L} u\|_2^2,
\end{align}
where the first step uses \eqref{Garding}, the second step is the definition of $L$ and the final step uses self-adjointness in a crucial way. Indeed, this can be turned into a complete proof, Kato's so-called second representation theorem~\cite[VI.\SS 2.6]{Kato}. No such abstract argument can work when $L$ is not self-adjoint~\cite{Counterexample_McIntosh}. The problem becomes incomparably harder and tied to deep results in harmonic analysis. On $O = \R^d$ it was eventually solved by Auscher--Hofmann--Lacey--McIntosh--Tchamitchian in their 2001 breakthrough paper~\cite{Kato-Square-Root-Proof} and extended by four of them to systems~\cite{Kato-Square-Root-Proof-Systems}. For a historical account and connections to other fields of analysis the reader can refer to the introduction of~\cite{Kato-Square-Root-Proof}.

On general open sets $O$ the problem as posed above is wide open till this day. Applications on sets with ``rough'' geometry come from various fields including, with exemplary references, elliptic and parabolic regularity~\cite{RobertJDE, Bonifacius-Neitzel}, Lions' non-autonomous maximal regularity problem~\cite{Achache-Ouhabaz,Fackler} and boundary value problems~\cite{AAM-ArkMath, AE-mixed}. This motivates the search for minimal geometric assumptions that allow to solve the Kato problem. In this paper we improve on all available results (to be reviewed momentarily) through the following

\begin{theorem}
\label{Thm: main kato result}
Let $O \subseteq \R^d$ be an open set and $D \subseteq \bd O$ a closed subset of the boundary. Suppose that $D$ is Ahlfors--David regular (Definition~\ref{Def: AD}) and that $O$ is locally uniform near $\bd O \setminus D$ (Definition~\ref{Def: locally eps-delta}). Then $\dom(\sqrt{L}) = \W^{1,2}_D(O)^m$ holds with equivalence of norms
\begin{align}
\label{Kato estimate}
\|u\|_2 + \|\nabla u \|_2 \approx \|\sqrt{L} u\|_2 \qquad \mathrlap{(u \in \W^{1,2}_D(O)^m)}
\end{align}
and the implicit constants depend on the coefficients of $L$ only through the coefficient bounds.
\end{theorem}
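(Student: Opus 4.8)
The plan is to recast the Kato estimate~\eqref{Kato estimate} as a bounded holomorphic functional calculus for a first-order perturbed Dirac-type operator, following the Axelsson--Keith--McIntosh approach adapted to mixed boundary conditions, and then to prove the resulting quadratic estimate by harmonic analysis on a dyadic structure built from the geometry of $O$ and $D$. Concretely, on $\cH \coloneqq \L^2(O)^m \oplus \L^2(O)^{dm}$ I would introduce the nilpotent operator $\bGamma \coloneqq \left(\begin{smallmatrix} 0 & 0 \\ \nabla & 0 \end{smallmatrix}\right)$ with domain $\W^{1,2}_D(O)^m \oplus \L^2(O)^{dm}$, so that $\bGamma^*$ is the negative distributional divergence carrying the weak co-normal condition on $\bd O \setminus D$ in its domain. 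From the principal coefficients $(a_{ij})$ one builds a bounded, pointwise accretive multiplication operator $B$ and sets $\bGamma_B^* \coloneqq B\bGamma^*B$, $\bPiB \coloneqq \bGamma + \bGamma_B^*$; the lower-order terms and the passage to $(m\times m)$-systems are incorporated by augmenting $\bGamma$ with bounded zeroth-order perturbations as in~\cite{Kato-Square-Root-Proof-Systems}, which changes none of the functional-calculus conclusions. The G\r{a}rding inequality~\eqref{Garding} is exactly what produces the coercivity and invertibility needed, and the standard intertwining (available from the functional-calculus background) reduces~\eqref{Kato estimate}, as well as the fractional-power statements, to the single assertion that $\bPiB$ admits a bounded $\H^\infty$-calculus on $\cl{\Rg(\bPiB)}$; by the Axelsson--Keith--McIntosh theorem this follows once one proves the square function estimate $\int_0^\infty \|t\bPiB(1+t^2\bPiB^2)^{-1} u\|_{\cH}^2\, \frac{\d t}{t} \approx \|u\|_{\cH}^2$ on $\cl{\Rg(\bPiB)}$.

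The two hypotheses on $(O,D)$ feed precisely the inputs that this harmonic analysis needs. Ahlfors--David regularity of $D$ supplies a Christ-type dyadic decomposition of $D$ and the mass lower bounds from which one derives Hardy--Poincar\'e inequalities that see the vanishing trace on $D$; local uniformity near $\bd O \setminus D$ yields, after localising with a partition of unity to balls on which the corresponding constants are in force, a bounded Sobolev extension operator $\W^{1,2}_D(O) \to \W^{1,2}(\R^d)$ near the Neumann part, hence Poincar\'e and interpolation inequalities there. Splicing these local pieces with interior elliptic/Sobolev estimates produces a dyadic system on $O$ simultaneously adapted to $D$ and to $\bd O \setminus D$, scale-invariant Poincar\'e inequalities on the dyadic cubes, and the identification of $\W^{1,2}_D(O)^m$ with the real-interpolation scale underpinning the fractional-Laplacian regularity results. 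All constants are uniform in location and scale, which is what permits $O$ to be unbounded.

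For the quadratic estimate itself I would first record that the resolvents $R_t^B \coloneqq (1+\i t\bPiB)^{-1}$ and the operators $\Theta_t^B \coloneqq t\bGamma_B^* R_t^B$ satisfy $\L^2$ off-diagonal (Gaffney--Davies) decay of arbitrary polynomial order, which follows from coercivity and the boundedness of $B$ by the usual commutator computation and is insensitive to the shape of $O$. Writing $\mathcal{A}_t$ for the dyadic averaging operator of the previous paragraph, the estimate is then obtained from the principal-part splitting $\Theta_t^B = (\Theta_t^B - \Theta_t^B\mathcal{A}_t) + \Theta_t^B\mathcal{A}_t$: the ``error'' term is absorbed using off-diagonal decay, the scale-invariant Poincar\'e inequalities, and a Schur-type square function bound, while the ``principal'' term is reduced to the statement that the measure $|\Theta_t^B\ind(x)|^2\, \frac{\d x\, \d t}{t}$ is Carleson. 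That Carleson bound is the heart of the matter; I would prove it by a $T(b)$-type stopping-time argument, testing against functions $f_Q^w \coloneqq R_{\len(Q)}^B(w\,\ind_{5Q})$ that are close to the constant $w$ at scale $\len(Q)$ while lying in $\dom(\bGamma_B^*)$, so that the mixed boundary conditions are respected.

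I expect the genuine difficulty to lie in making the geometric and the analytic parts cooperate under the stated \emph{minimal} hypotheses. On the function-space side one must build the adapted dyadic decomposition and the uniform Poincar\'e/extension toolbox when $O$ is neither Ahlfors--David regular itself nor a uniform domain, only locally uniform near the Neumann part, and when $D$ may be a very thin ADR set; the ``collar'' where the local-uniformity balls overlap and the region near $D$ where no extension is available have to be handled by hand. On the harmonic-analysis side one must produce the test functions $f_Q^w$ inside $\dom(\bGamma_B^*)$ with control independent of the base cube and run the stopping-time scheme with constants independent of location and scale. Once the toolbox of the second paragraph is in place the stopping-time argument follows the by-now-standard template; thus the bulk of the work is the construction of that toolbox under the weakest possible geometric assumptions on $(O,D)$, which is precisely the novelty claimed in the introduction.
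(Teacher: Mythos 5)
Your framework — perturbed Dirac operator, reduction via McIntosh's theorem to a quadratic estimate, Carleson measure and stopping-time argument on a dyadic system — is indeed the one the paper ultimately relies on, but there is a genuine gap: your sketch silently assumes the interior thickness condition $|\B(x,r)\cap O|\gtrsim|\B(x,r)|$ for $x\in O$, $r\leq 1$, which the theorem's hypotheses do not imply. Local uniformity near $\bd O\setminus D$ together with $D$ being Ahlfors--David regular still permits $O$ to be arbitrarily thin near parts of $D$ lying away from $\bd O\setminus D$; the extension operator of Theorem~\ref{Thm: W12D extension} does not repair this measure-theoretic degeneracy. Every stage of the harmonic analysis you propose — the dyadic averaging operators, the scale-invariant Poincar\'e inequalities, the Carleson bound, the uniform construction of the test functions $f_Q^w$ — needs the Lebesgue measure restricted to $O$ to be doubling, and once that fails your argument stalls. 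The paper's main new idea, which is precisely why the statement improves on all prior results, is not to push the stopping-time analysis through on a thin set but to sidestep the obstruction entirely: one fattens $(O,D)$ to a configuration $(\bO,\bD)$ by attaching grid cubes near $D$ with new pure-Dirichlet boundary, so that $\bO$ is interior thick while $\bD$ remains Ahlfors--David regular and $\bO$ remains locally uniform near the unchanged Neumann part (Proposition~\ref{Prop: fat set embedding}); one then proves that the functional calculus of the associated operator $\bL$ on $\bO$ decomposes as the $\ell^2$-tensor product of that of $L$ on $O$ and of $-\Delta+1$ on $\bO\setminus O$ along the disjoint splitting (Proposition~\ref{Prop: Regularity decomposition for the square root}), and reads the Kato estimate for $L$ off the one for $\bL$.

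A secondary divergence: even on the fattened interior-thick set the paper does not re-run a $T(b)$ scheme from scratch. It invokes the quadratic-estimate result Theorem~\ref{Thm: EHT main result} of~\cite{Laplace-Extrapolation} as a black box (after weakening its boundary hypothesis from ADR to mere porosity, Lemma~\ref{Lem: bdO lemma from EHT}), so the whole burden reduces to verifying the structural hypotheses (H1)--(H7). The only substantive one is (H7), a fractional-regularity statement $\dom((1-\Delta_{\bD})^{(1+\gamma)/2})=\W^{1+\gamma,2}_{\bD}(\bO)$ for some $\gamma>0$, which is Theorem~\ref{Thm: Laplace}; its proof is purely interpolation-theoretic (Netrusov's spectral-synthesis theorem, the complex interpolation scales of~\cite{BE}, and \v{S}ne\u{\i}berg extrapolation) and involves no off-diagonal estimates or Carleson measures. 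Your instinct that extension, Poincar\'e and interpolation properties are the right place to feed in the geometry is correct, but in the paper they enter through this fractional-Laplacian theorem rather than through a bespoke dyadic toolbox. Finally, the paper's Dirac operator acts on three components $\L^2(O)^m\times\L^2(O)^{dm}\times\L^2(O)^m$ as in~\eqref{Special AKM operators}, not two, so as to accommodate the inhomogeneous G\r{a}rding inequality~\eqref{Garding} and the lower-order terms while keeping $L$ in the top-left block of $\bPiB^2$.
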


Here, \emph{coefficient bounds} refers to the lower bound $\lambda$ in \eqref{Garding} and a pointwise upper bound $\Lambda$ for the coefficients. The geometric framework in Theorem~\ref{Thm: main kato result} includes the one proposed by Brewster--Mitrea--Mitrea--Mitrea in their influential paper~\cite{TripleMitrea-Brewster} for treating various aspects of mixed boundary value problems in vast generality. It will be discussed in detail in Section~\ref{Sec: Geometry}. We do not require coordinate charts around $\bd O \setminus D$ in any sense and we do not assume that $O$ satisfies the \emph{interior thickness condition}
\begin{align}
\label{ITC}
\exists c>0 \quad \forall x\in O, \, r \leq 1 \; : \; |\B(x,r) \cap O| \geq c |\B(x,r)|.
\end{align}
Those are two main points compared to all earlier results. 

Indeed, for pure Neumann boundary conditions ($D=\emptyset$) the solution of the Kato problem was only known on Lipschitz domains~\cite{AT2, AKM}. Our assumption reduces to $O$ being an $(\eps,\delta)$-domain (with positive radius if it has infinitely many connected components, see Remark~\ref{Rem: consistency with BHT}). For example, $O$ could be the interior of the von Koch snowflake~\cite[Fig.~3.5]{Triebel-Wavelets}. Pure Dirichlet conditions ($D = \bd O$) were first treated in \cite{AT2} on Lipschitz domains. The Lions problem on mixed boundary conditions was solved in \cite{AKM} on a class of Lipschitz domains if $D$ is a Lipschitz submanifold of $\bd O$. An elaboration on their method of proof in \cite{Darmstadt-KatoMixedBoundary} yielded the solution on bounded interior thick sets with Ahlfors--David regular boundary that are locally Lipschitz regular around $\bd O \setminus D$. 

The proof of Theorem~\ref{Thm: main kato result} divides into three steps. They correspond to the three Sections~\ref{Sec: Laplace} - \ref{Sec: Fattening}. Here, we give an informal overview on the strategy of proof and to fix ideas it will be somewhat more convenient to reverse the order of Section~\ref{Sec: Laplace} and Section~\ref{Sec: AKM}.

Some parts of the paper require that $O$ satisfies \eqref{ITC} nonetheless. We avoid any ambiguity by the following convention. In a context where the underlying set is interior thick, we use bold letters for the relevant objects and write for example $\bO$ instead of $O$.

\subsection*{Step 1: Dirac operator framework}

As many before us, we cast the Kato problem in the abstract first order framework of perturbed Dirac operators that was introduced by Axelsson--Keith--McIntosh in their remarkable elaboration of the original proof of the Kato conjecture \cite{AKM-QuadraticEstimates}. More precisely, we use the refinement in \cite{Laplace-Extrapolation} that will allow us to work on interior thick sets with porous boundary (Definition~\ref{Def: Porous}). The latter holds true under our assumptions (Corollary~\ref{Cor: Boundary porous}) and for the moment let us assume in addition that $\bO$ is interior thick in the sense of~\eqref{ITC}. 

Consequently, we can use the Dirac operator framework as a black box. In Section~\ref{Sec: AKM} we explain what is going on ``behind the scenes'' in more detail, but what basically happens is that harmonic analysis (present due to the non-smooth coefficients) is decoupled from geometry (present due to the rough nature of $\bO$). The harmonic analysis is taken care of by the Axelsson--Keith--McIntosh framework. It then turns out that in order to prove our main theorem, still under the additional assumption \eqref{ITC}, we only need the following \emph{higher regularity result} for the Laplacian with boundary conditions on $\bO$.

\subsection*{Step 2: Higher regularity for the Laplacian}

In light of the growing interest in fractional Laplacians in other fields of analysis this might be of independent interest. The (componentwise) Laplacian $-\Delta_{\bD} + 1$ corresponds to $\boldsymbol{a}_{ij} = \delta_{ij}$, $\boldsymbol{b}_i = 0 = \boldsymbol{c}_j$, $\boldsymbol{d}=1$ in~\eqref{L}. Definitions of fractional Sobolev spaces $\W^{\alpha,2}(\bO)$ and $\W^{\alpha,2}_{\bD}(\bO)$ will be given in Section~\ref{Sec: Laplace}. 

\begin{theorem}
\label{Thm: Laplace}
Let $\bO\subseteq \R^d$ be open and interior thick, let $\bD \subseteq \bd \bO$ be closed and Ahlfors--David regular and assume that $\bO$ is locally uniform near $\bd\bO \setminus \bD$. Then there exists $\eps \in (0,\frac{1}{2})$ such that
\begin{align*}
-\Delta_{\bD} + 1 : \W^{1+s,2}_{\bD}(\bO)^m \to \W^{-1+s,2}_{\bD}(\bO)^m
\end{align*}
is an isomorphism for all $s \in (-\eps,\eps)$. Its fractional power domains in $\L^2(\bO)$ are given by
\begin{align*}
\dom((-\Delta_{\bD} + 1)^{\frac{\alpha}{2}}) 
= \begin{cases}
\W^{\alpha,2}_{\bD}(\bO)^m & \text{if $\alpha \in (\frac{1}{2},1+\eps)$,} \\
\W^{\alpha,2}(\bO)^m & \text{if $\alpha \in (0,\frac{1}{2})$}.
\end{cases}
\end{align*}
\end{theorem}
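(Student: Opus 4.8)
The plan is to establish the result in three linked stages: first the isomorphism property on the scale $\W^{-1+s,2}_{\bD} \leftrightarrow \W^{1+s,2}_{\bD}$, then the identification of fractional power domains via complex interpolation, and finally the delicate endpoint analysis near $\alpha = \tfrac12$. For the first stage, I would set up the operator $-\Delta_{\bD}+1$ as the bounded operator $\W^{1,2}_{\bD}(\bO)^m \to \W^{1,2}_{\bD}(\bO)^{m\,*} = \W^{-1,2}_{\bD}(\bO)^m$ induced by the Dirichlet form, which is an isomorphism for $s=0$ by the Lax--Milgram theorem together with the Gårding inequality \eqref{Garding}. The goal is then an \emph{extrapolation} statement: invertibility persists on a small symmetric interval $s \in (-\eps,\eps)$ around $0$. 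The natural tool is Šne\u{\i}berg's stability theorem (openness of the set of invertibility parameters on a complex interpolation scale), so the crux is to exhibit $\{\W^{1+s,2}_{\bD}(\bO)^m\}_s$ and $\{\W^{-1+s,2}_{\bD}(\bO)^m\}_s$ as complex interpolation scales; this in turn relies on the geometric hypotheses (Ahlfors--David regularity of $\bD$, local uniformity near $\bd\bO\setminus\bD$, and interior thickness) to produce a bounded Sobolev \emph{extension operator} and correspondingly nice intrinsic characterisations of $\W^{\alpha,2}_{\bD}(\bO)$, so that these spaces really do interpolate like their counterparts on $\R^d$. This should be assembled from the fractional Sobolev space theory developed in Section~\ref{Sec: Laplace} (which the excerpt promises) plus retraction/coretraction arguments transferring interpolation from $\R^d$.

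For the second stage, once we know $-\Delta_{\bD}+1$ is a positive self-adjoint (more generally sectorial) operator on $\L^2(\bO)^m$ with $\dom((-\Delta_{\bD}+1)^{1/2}) = \W^{1,2}_{\bD}(\bO)^m$ — the $\alpha=1$ case being the classical form-domain identity (Kato's second representation theorem, which \emph{is} available here since the Laplacian is self-adjoint) — the family $\{\dom((-\Delta_{\bD}+1)^{\alpha/2})\}_\alpha$ is a complex interpolation scale between $\L^2(\bO)^m$ and $\W^{1,2}_{\bD}(\bO)^m$, and more generally between $\W^{1+s,2}_{\bD}$ and $\W^{-1+s,2}_{\bD}$ by the isomorphism just proved. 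Interpolating gives $\dom((-\Delta_{\bD}+1)^{\alpha/2}) = [\L^2(\bO)^m, \W^{1+\eps',2}_{\bD}(\bO)^m]_\theta$ with $\alpha = \theta(1+\eps')$, and one identifies the right-hand side with $\W^{\alpha,2}_{\bD}(\bO)^m$ for $\alpha$ in the claimed range, again using the interpolation identities for the $\W^{\alpha,2}_{\bD}$ scale from Section~\ref{Sec: Laplace}.

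The third stage — and the point I expect to be the main obstacle — is the behaviour at and below $\alpha = \tfrac12$. The dichotomy in the statement ($\W^{\alpha,2}_{\bD}$ for $\alpha>\tfrac12$ versus $\W^{\alpha,2}$, with no boundary condition, for $\alpha<\tfrac12$) reflects the familiar fact that the trace on $\bD$ makes sense only for $\alpha>\tfrac12$, so that below the threshold $\W^{\alpha,2}_{\bD}(\bO) = \W^{\alpha,2}(\bO)$ as sets with equivalent norms; proving this equality quantitatively is where Ahlfors--David regularity of $\bD$ is genuinely used (one needs a Hardy-type inequality / the density of test functions vanishing near $\bD$ in all of $\W^{\alpha,2}$ for $\alpha<\tfrac12$, which is exactly an ADR-codimension-estimate phenomenon). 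Combined with the interpolation scale, this then pins down $\dom((-\Delta_{\bD}+1)^{\alpha/2})$ for $\alpha\in(0,\tfrac12)$. The genuinely non-routine work is therefore: (a) verifying the two Sobolev scales interpolate correctly under only these weak geometric hypotheses, and (b) the endpoint identification $\W^{\alpha,2}_{\bD}(\bO) = \W^{\alpha,2}(\bO)$ for $\alpha<\tfrac12$; the Šne\u{\i}berg extrapolation and the interpolation of power-domain scales are comparatively standard once (a) and (b) are in place. The value of $\eps$ is not explicit; it is whatever the Šne\u{\i}berg argument yields, intersected with $(0,\tfrac12)$ so that the $\alpha<\tfrac12$ regime stays clear of the trace threshold.
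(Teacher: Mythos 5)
Your three-stage plan matches the paper's strategy in broad strokes (Lax--Milgram plus {\u{S}}ne{\u{\ii}}berg for the isomorphism; complex interpolation of fractional power domains; the trace threshold at $\alpha = \tfrac12$), but Stage~2 contains a circularity that requires an additional idea to resolve. Reading $\alpha = \theta(1+\eps')$ off the abstract fractional-power-domain interpolation scale gives $\dom((1-\Delta_{\bD})^{\alpha/2}) = [\L^2, \dom((1-\Delta_{\bD})^{(1+\eps')/2})]_\theta$; to replace the right-hand endpoint with $\W^{1+\eps',2}_{\bD}(\bO)^m$ you must already know $\dom((1-\Delta_{\bD})^{(1+\eps')/2}) = \W^{1+\eps',2}_{\bD}(\bO)^m$, which is precisely the theorem at $\alpha = 1+\eps'$. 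Crucially, the {\u{S}}ne{\u{\ii}}berg extrapolation from Stage~1 furnishes the isomorphism $1-\Delta_{\bD}: \W^{1+s,2}_{\bD}(\bO) \to \W^{-1+s,2}_{\bD}(\bO)$, an operator between \emph{different} spaces, and this does not by itself identify the $\L^2$-fractional-power domain. The paper bridges this with a factorisation on a dense subset of $\L^2(\bO)$: for $\alpha \in (0,\eps)$ write $(1-\Delta_{\bD})^{-(1+\alpha)/2} = (1-\Delta_{\bD})^{-1}\,(1-\Delta_{\bD})^{(1-\alpha)/2}$, observe that the low-order factor extends (duality and self-adjointness, using the already-established $\alpha \in [0,1]$ case) to an isomorphism $\L^2(\bO) \to \W^{-1+\alpha,2}_{\bD}(\bO)$, and then compose with the extrapolated Lax--Milgram isomorphism $(1-\Delta_{\bD})^{-1}: \W^{-1+\alpha,2}_{\bD}(\bO) \to \W^{1+\alpha,2}_{\bD}(\bO)$. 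Without some variant of this bridge, your argument stops at $\alpha = 1$.

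A smaller point on Stage~3: in the paper's setup $\W^{\alpha,2}_{\bD}(\bO)$ is only defined for $\alpha \in (\tfrac12, \tfrac32)$ (the trace operator $\Res_{\bD}$ needs that range), so the proposed Sobolev-space identity ``$\W^{\alpha,2}_{\bD}(\bO) = \W^{\alpha,2}(\bO)$ as sets with equivalent norms'' is not the statement that actually enters. The real content is the interpolation identity $[\L^2(\bO), \W^{1,2}_{\bD}(\bO)]_\alpha = \W^{\alpha,2}(\bO)$ for $\alpha < \tfrac12$ (Proposition~\ref{Prop: Interpolation}), whose nontrivial inclusion (Lemma~\ref{Lem: Schwierige Inklusion}) is imported from~\cite{BE} and then upgraded to a norm equivalence via a porosity argument and reiteration; Netrusov's density theorem is used in this paper for other purposes (Lemma~\ref{Lem: mapping property gradient}, Lemma~\ref{Lem: Die ganze Scheisse}), not to produce the interpolation identity. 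Your intuition about the trace threshold and the ADR-codimension phenomenon is in the right ballpark, but calling the interpolation step ``comparatively standard'' once that identity is in hand hides exactly where the hard work sits.
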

We give the proof in Section~\ref{Sec: Laplace} below. We remark that the results for $s = 0$ and $\alpha = 1$ are elementary consequences of the Lax--Milgram lemma and the Kato problem for self-adjoint operators, respectively. Hence, we are concerned with a question of extrapolation. Compared to certain forerunners~\cite{AKM, Darmstadt-KatoMixedBoundary, Pryde-MixedBoundary} there are two new ingredients that allow us to relax the required geometric quality of $\bO$: Improved complex interpolation theory for (fractional) Sobolev spaces with boundary conditions, recently developed in \cite{BE}, and Netrusov's spectral synthesis~\cite[Ch.~10]{Adams-Hedberg} replacing more naive measure theoretic considerations in \cite{Darmstadt-KatoMixedBoundary}.

\subsection*{Step 3: Eliminating the interior thickness condition}

At this stage the proof of Theorem~\ref{Thm: main kato result} has been completed under the additional assumption \eqref{ITC}. The final step, carried out in Section~\ref{Sec: Fattening}, consists in eliminating this assumption by an \emph{ad hoc} method. 

First, we observe that if an open set $\bO$ can be written as a countable union of disjoint open sets $\bO = \bigcup_i O_i$ in such a way that the canonical identification
\begin{align}
\L^2(\bO)^m \cong \bigotimes_{i} \L^2(O_i)^m
\end{align}
also gives rise to a decomposition of form domains
\begin{align}
\W^{1,2}_{\bD}(\bO)^m \cong \bigotimes_{i} \W^{1,2}_{D_i}(O_i)^m,
\end{align}
then a divergence form operator $\bL$ on $\bO$ with Dirichlet boundary part $\bD$ inherits a decomposition of its functional calculus as
\begin{align}
f(\bL) \cong \bigotimes_{i} f(L_i),
\end{align}
where $L_i \coloneqq \bL|_{O_i}$ is subject to Dirichlet conditions on $D_i \coloneqq \bD \cap \bd O_i$. Obviously we are somewhat sketchy and some caution is needed to make such decomposition precise, see Section~\ref{Sec: Fattening}. Solving the Kato problem for the triple $(\bL, \bO, \bD)$ will therefore be equivalent to solving it for all triples $(L_i, O_i, D_i)$ with uniform control of the constants in $i$ (Proposition~\ref{Prop: Regularity decomposition for the square root}).

This being said, we reverse the order of reasoning. We let $(L_0, O_0, D_0) \coloneqq (L,O,D)$ be the original operator in Theorem~\ref{Thm: main kato result} and construct an open set $O_1$ disjoint to $O$ with Dirichlet part $D_1 \coloneqq \bd O_1$ such that $\bO \coloneqq O_0 \cup O_1$ with Dirichlet part $\bD \coloneqq D_0 \cup D_1$ is a ``fattened version'' of $(O,D)$: It has the same geometric quality and additionally satisfies \eqref{ITC}. Then we set $L_0 \coloneqq L$ and $L_1 \coloneqq -\Delta_{\bd O_1} +1$. In the interior thick setting we have already solved the Kato problem. Hence, we have the solution for $(\bL, \bO, \bD)$ and obtain the solution for $(L,O,D)$ by restriction to that triple.

\subsection*{Generalizations and consequences}

Once the $\L^2$ comparability \eqref{Kato estimate} has been established, extensions to $\L^p$ norms can be obtained from weak-type estimates and interpolation. On domains this topic has been treated in~\cite{ABHR, E}. To a large extent the geometric setting has been dictated by what was known to be sufficient for the $\L^2$ result. We suggest that all results in \cite{ABHR} and \cite{E} can be extended to the setup of Theorem~\ref{Thm: main kato result} -- at least when $O$ is bounded and connected. 

\subsection*{Acknowledgment}

We would like to thank Patrick Tolksdorf for inspiring discussions on the topic and the anonymous referee for a very thoughtful reading of our manuscript. Sebastian Bechtel would like to thank the ``Studienstiftung des deutschen Volkes'' for financial and academic support. The research of Moritz Egert is partly supported by the ANR project RAGE ANR-18-CE40-0012.

\subsection*{(Non-)Standard notation} 

All vector spaces will be over the complex numbers. Hence, if $H$ is a Hilbert space, then we consider its anti-dual space $H^*$ of bounded conjugate-linear functionals on $H$. By the Riesz isomorphism, $H$ and $H^\ast$ can be identified. The inner product on $H$ is denoted by $(\,\cdot\, |\,\cdot\,)_H$ or simply $(\,\cdot\,|\,\cdot\,)$ if the context is clear. If $T$ is an operator in $H$, we write $\dom(T)$, $\Rg(T)$ and $\Ke(T)$ for its \emph{domain}, \emph{range} and \emph{null space}. Usually, we equip the domain with the graph norm and consider range and null space as subspaces of $H$. We write $H^m$ for the $m$-fold product with $\ell^2$-norm. Likewise if $(H_i)_i$ is a sequence of Hilbert spaces and $T_i$ is an operator in $H_i$ for each $i$, then $\bigotimes_i H_i$ is the Hilbert space of sequences $(U_i)_i$ with $U_i\in H_i$ and $\|(U_i)_i\|\coloneqq (\sum_i \|U_i\|_{H_i}^2)^{1/2} < \infty$ and the operator $\bigotimes_i T_i$ acts componentwise on its domain $\bigotimes_i \dom(T_i) \subseteq \bigotimes_i H_i$. 

Euclidean norms on finite dimensional spaces are denoted by $|\cdot|$. The \emph{Lebesgue measure} in $\R^d$ is also denoted by that symbol. 
We write $\diam(\,\cdot\,)$ and $\dist(\cdot \,,\cdot)$ for the \emph{diameter} and (semi) \emph{distance} of sets in $\R^d$. We use the symbols $\lesssim$ and $\approx$ for one- and two-sided inequalities that hold up to multiplication by a constant that does not depend on the quantified parameters and that depends on the coefficients of $L$ only through the coefficient bounds. 
%%%%%%%%%%%%%%%%%%%%%%%%%%%%%%%%%%%%%%%%%%%%%%%%%%%%%%%%%%%%%%%%%%%%%%%%%%%%%%%%%%%%%%%%%%%%%%%%%%%%%%%%%%%%%%%%%%%%%%%%%%%%%%%%%%%%%%%%%%%%%%%%%%%%%%%%%%%%%%%%%%%%
\section{Discussion of the geometric setup}
\label{Sec: Geometry}

Throughout we work in Euclidean space $\R^d$, $d \geq 2$. We write $B = \B(x,r)$ for the open ball of radius $\rad(B) = r$ centered at $x \in \R^d$ and $cB$ for the concentric ball of radius $c \rad(B)$. We use analogous notation for cubes $Q$ of side length $\len(Q)$. By $\cH^s(E)$, $s \in (0,d]$, we denote the $s$-dimensional \emph{Hausdorff measure} of $E\subseteq \R^d$ defined as follows. For $\varepsilon>0$ we put $$\cH^s_\varepsilon(E)\coloneqq \inf \Bigl\{ \sum_i \rad(B_i)^s: \bigcup_i B_i \supseteq E, \rad(B_i) \leq \varepsilon \Bigr\}$$ and since this value is increasing as $\varepsilon \to 0$ we define $\cH^s(E) \coloneqq \lim_{\varepsilon\to 0} \cH^s_\varepsilon(E)$, see \cite[Sec.~5.1]{Adams-Hedberg}. 

\begin{definition}
\label{Def: AD}
A closed set $D \subseteq \R^d$ is \emph{Ahlfors--David regular} if there is comparability
\begin{align}
\label{eq: l-set}
\cH^{d-1}(B \cap D) \approx \rad(B)^{d-1}
\end{align}
uniformly for all open balls $B$ of radius $\rad(B) < \diam(D)$ centered in $D$.
\end{definition}

\begin{remark}
\label{Rem: AD}
A practical interpretation is that Ahlfors--David regular sets behave $(d-1)$-dimensional at all scales. When $D$ is bounded, then for any $C \in (0,\infty)$ the restriction on the radius may equivalently be changed to $\rad(B) \leq C$, at the cost of changing the implicit constants, see~\cite[Lem.~A.4]{BE}. Sets with the latter property for $C=1$ are called $(d-1)$-sets in~\cite{JW}.
\end{remark}

\subsection{Locally uniform domains}

\begin{definition}
\label{Def: locally eps-delta}
Let $\eps \in (0,1]$ and $\delta \in (0,\infty]$.  Let $O \subseteq \R^d$ be open and $N \subseteq \bd O$. Set $N_\delta  \coloneqq \{z \in \R^d : \dist(z,N) < \delta \}$. Then $O$ is called \emph{locally an $(\eps,\delta)$-domain near $N$} if the following properties hold.

\begin{enumerate}
	\item All points  $x,y \in O \cap N_\delta$ with $|x-y| < \delta$ can be joined in $O$ by an \emph{$\eps$-cigar with respect to $\bd O \cap N_\delta$}, that is to say, a rectifiable curve $\gamma \subseteq O$ of length $\ell(\gamma) \leq \eps^{-1}|x-y|$ such that
	\begin{align}
	\label{eq: eps-delta}
	\dist(z, \bd O \cap N_\delta) \geq \frac{\eps|z-x|\, |z-y|}{|x-y|} \qquad \mathrlap{(z \in \gamma).}
	\end{align}
	
	\item $O$ has \emph{positive radius near $N$}, that is, there exists $c > 0$ such that all connected components $O'$ of $O$ with $\bd O' \cap N \neq \emptyset$ satisfy $\diam(O') \geq c$. 
\end{enumerate}

If the values of $\eps,\delta, c$ need not be specified, then $O$ is simply called \emph{locally uniform near~$N$}.
\end{definition}

\begin{remark}
\label{Rem: locally eps-delta}
Definition~\ref{Def: locally eps-delta} describes a quantitative local connectivity property of $O$ near $N$. For an illustration of $\eps$-cigars with respect to $\bd O$ the reader can refer for instance to~\cite[Fig.~3.1]{Triebel-Wavelets}. Having positive radius is of course only a restriction if $O$ has infinitely many connected components. This condition will be needed only once in our paper, namely in Theorem~\ref{Thm: W12D extension} below.
\end{remark}

Condition \eqref{eq: eps-delta} originates from Jones' influential paper~\cite{Jones_ExtensionOperator}. For his $(\eps,\delta)$-domains he requires that all $x,y \in O$ with $|x-y| < \delta$ can be joined by an $\eps$-cigar with respect to $\bd O$. Locally $(\eps,\delta)$-domains near a part of the boundary have been pioneered in \cite{TripleMitrea-Brewster} using $(\eps,\delta)$-domains as charts around $N$ in analogy with how Lipschitz graphs give rise to the notion of sets with Lipschitz boundary. Our novel definition avoids charts and is inspired by the preprint~\cite{BHT}. Let us give a concise comparison with these two works.

\begin{proposition}
\label{Prop: consistency with BHT}
Let $O \subseteq \R^d$ be an open set and let $N \subseteq \bd O$. 
\begin{enumerate}
	\item If $O$ is locally an $(\eps,\delta)$-domain near $N$ in the sense of \cite{TripleMitrea-Brewster}, then it is locally uniform near $N$ in the sense of Definition~\ref{Def: locally eps-delta}.
	
	\item If $O$ is locally uniform near $N$ in the sense of Definition~\ref{Def: locally eps-delta}, then $(O,N)$ is an admissible geometry in \cite{BHT}.
\end{enumerate}
\end{proposition}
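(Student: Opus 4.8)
The plan is to unwind the definitions on both sides and check the two implications separately. For part (i), I would recall that ``locally an $(\eps,\delta)$-domain near $N$ in the sense of \cite{TripleMitrea-Brewster}'' means that $N$ admits an open cover by sets $U_k$ together with rigid motions $T_k$ and genuine $(\eps_k,\delta_k)$-domains $\Omega_k$ such that $T_k(O \cap U_k) = \Omega_k \cap T_k(U_k)$; by compactness-type arguments (or the assumed uniformity) one may take the constants $\eps_k \equiv \eps$, $\delta_k \equiv \delta$ uniform over $k$. The goal is to produce a single $\delta'>0$ so that \emph{any} two points $x,y \in O \cap N_{\delta'}$ with $|x-y|<\delta'$ lie in a common chart $U_k$ and then transfer Jones' $\eps$-cigar joining them inside $\Omega_k$ back via $T_k^{-1}$ to an $\eps'$-cigar in $O$ with respect to $\bd O \cap N_{\delta'}$. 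The main point to check is that the cigar stays inside $O\cap U_k$ and that $\dist(z,\bd\Omega_k)$ controls $\dist(z,\bd O \cap N_{\delta'})$ from below up to a harmless constant; this is where one uses a Lebesgue-number argument for the cover and the fact that a short enough cigar cannot escape a fixed-size neighborhood of its endpoints. The ``positive radius'' clause is essentially built into the chart description when $N$ meets only finitely many components locally, and otherwise follows because each component touching $N$ contains a fixed-size piece of some $\Omega_k$.

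For part (ii) I would look up the definition of ``admissible geometry'' in \cite{BHT}. Typically this asks for: (a) $O$ is open, (b) some Ahlfors--David or thickness-type regularity that is \emph{not} part of our hypotheses here but is presumably only required on the Dirichlet part and hence vacuous or assumed separately, and (c) precisely a quantitative connectivity condition near $N$ phrased with cigars or Harnack chains. If their connectivity condition is literally the $(\eps,\delta)$-cigar condition near $N$, then (ii) is immediate from Definition~\ref{Def: locally eps-delta}. If instead \cite{BHT} uses Harnack chains (a chain of balls of comparable radii connecting $x$ to $y$ with controlled length), I would derive such a chain from the cigar in the standard way: cover $\gamma$ by balls $\B(z_j, \tfrac12 \dist(z_j,\bd O \cap N_\delta))$ along the curve, use \eqref{eq: eps-delta} to lower-bound the radii in terms of $\min(|z_j-x|,|z_j-y|)$, and use $\ell(\gamma)\le \eps^{-1}|x-y|$ to bound the number of balls needed. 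Again the positive-radius hypothesis is exactly what guarantees the chain construction does not degenerate near isolated small components.

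The routine parts are the rigid-motion transfer of cigars and the cigar-to-Harnack-chain conversion, both of which are standard manipulations in this area. The genuine obstacle is bookkeeping with the \emph{localization}: in \cite{TripleMitrea-Brewster} the $(\eps,\delta)$-condition is stated chart by chart and in a rotated frame, whereas Definition~\ref{Def: locally eps-delta} is global but truncated to $N_\delta$, so one must carefully choose the common $\delta'$ (shrinking it to a Lebesgue number of the chart cover intersected with a tube around $N$) and verify that the \emph{truncated} obstacle set $\bd O \cap N_{\delta'}$ is comparable to the \emph{full} boundary $\bd O$ that appears in the chart-wise cigar condition on the relevant scales. This comparability is what lets the lower bound \eqref{eq: eps-delta} survive the translation between the two formalisms, and it is the step I would write out most carefully; everything else is then a direct substitution.
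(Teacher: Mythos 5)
Your plan for part~(i) is essentially the paper's own: localize to a chart $U_i$, invoke the $\eps$-cigar inside the $(\eps,\delta)$-domain $O_i$, and show it survives truncation to $\bd O \cap N_{\delta'}$. Two things to tighten. First, the version of \cite[Def.~3.4]{TripleMitrea-Brewster} used here has no rigid motions: it requires directly that $O \cap U_i = O_i \cap U_i$, so the conjugation step you describe disappears. The remaining crux you correctly flag — that $\dist(z,\bd O_i)$ bounds $\dist(z,\bd O \cap N_{\delta'})$ from below — is obtained in the paper by showing that a point of $\cl{\bd O \cap N_{\delta'}}$ closest to $z$ lies in $\bd O \cap 6B \subseteq \bd O_i$, which requires keeping the curve short ($\delta' \le \eps r$) so that $\gamma \subseteq 3B$ and $8B \subseteq U_i$. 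Second, your treatment of the positive-radius clause is too loose: a component touching $N$ ``containing a fixed-size piece of some $\Omega_k$'' does not control its own diameter. The correct argument shows that a component $O'$ of $O$ with $\bd O' \cap N \neq \emptyset$ and $\diam(O') < 2r$ is \emph{an entire connected component} of some $O_i$ (no curve in $O_i$ can leave $O'$ without leaving $O$), and then $\diam(O') \ge c$ follows from the hypothesis on $O_i$.

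Part~(ii) has a genuine gap: neither of your two guesses for what ``admissible geometry'' in \cite{BHT} means is right, and the actual definition has two features your proposal does not address. The condition requires that \emph{all} $x,y \in O$ with $|x-y| < \delta'$ — not merely those near $N$ — be joined by an $\eps'$-cigar with respect to $\cl{N}$, a cigar that need not stay inside $O$, together with a uniform bound $k(z,O) \le K$ on the hyperbolic distance (measured in $\R^d \setminus \cl{N}$) from each cigar point to $O$. Definition~\ref{Def: locally eps-delta} only supplies a cigar when $x,y \in N_\delta$; when, say, $x \notin N_\delta$, you get nothing from the hypothesis and must construct a curve by hand. The paper simply takes the straight segment from $x$ to $y$ and checks by a direct estimate that for points at distance $\ge \delta$ from $N$ the cigar inequality with respect to $\cl{N}$ holds automatically; it then bounds $k(z,O) \le k(z,x) \le 1$ using the segment from $z$ to $x$. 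For the case $x,y \in N_\delta$ the cigar lies in $O$, so $k(z,O)=0$, and the cigar estimate transfers because $N \subseteq \bd O \cap N_\delta$ forces $\dist(z,N) \ge \dist(z,\bd O \cap N_\delta)$. The cigar-to-Harnack-chain conversion you sketch is not what is needed and in any case would not verify the hyperbolic distance clause. You should look up the precise target definition before attempting this direction; as it stands, both branches of your proposal run aground.
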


\begin{proof}
We start with (i). Besides further quantitative conditions, being an $(\eps,\delta)$-domain near $N$ in the sense of \cite[Def.~3.4]{TripleMitrea-Brewster} means that there exist (at most) countably many open sets $U_i$ and constants $r,c>0$ such that
\begin{enumerate}
	\item[\quad(a)] for each $x \in N$ there exists some $i$ such that $B(x,8r) \subseteq U_i$ and
	\item[\quad (b)] for each $i$ there is an $(\eps,\delta)$-domain $O_i$ with connected components all of diameter at least $c$ such that $O \cap U_i = O_i \cap U_i$.
\end{enumerate}
We take $\delta' \coloneqq \min(\delta, \eps r)$. Note that in particular $\delta' \leq r$. We show that $O$ is locally an $(\eps,\delta')$-domain near $N$ in the sense of Definition~\ref{Def: locally eps-delta}. To this end, let $x,y \in O \cap N_{\delta'}$ be such that $|x-y|< \delta'$. According to (a) there is a ball $B$ of radius $r$ centered in $N$ and an index $i$ such that $x,y \in 2B$ and $8B \subseteq U_i$. Due to (b) we have $x,y \in O_i$. Consequently, there is a rectifiable curve $\gamma \subseteq O_i$ of length $\ell(\gamma) \leq \eps^{-1}|x-y|$ that joins $x$ to $y$ in such a way that
\begin{align}
\label{eq1: consistency with BHT}
\dist(z, \bd O_i) \geq \frac{\eps|z-x|\, |z-y|}{|x-y|} \qquad \mathrlap{(z \in \gamma).}
\end{align}
From $\ell(\gamma) < r$ we obtain $\gamma \subseteq 3B$ and (b) yields $\gamma \subseteq O_i \cap U_i \subseteq O$. Given $z \in \gamma$, we let $z'$ be a point in $\cl{\bd O \cap N_{\delta'}}$ closest to $z$. We have $|z-z'| \leq 3r$ since $B$ is centered in $N$, which shows that $z' \in \bd O \cap 6B$. Now, (b) yields $z' \in \bd O_i$. Thus we proved $\dist(z, \bd O_i) \leq \dist(z, \bd O \cap N_{\delta'})$ and by \eqref{eq1: consistency with BHT} we see that $\gamma$ is an $\eps$-cigar with respect to $\bd O \cap N_{\delta'}$.

Let $O'$ be a connected component of $O$ with $\bd O' \cap N \neq \emptyset$. We complete the proof of (i) by demonstrating $\diam(O') \geq \min(2r,c)$. Suppose we have $\diam(O') < 2r$. As above, we find a ball $B$ and an index $i$ such that $O' \subseteq 2B$ and $8B \subseteq U_i$. From (b) we obtain that $O$ contains all $x \in O_i$ with $\dist(x,O') < 6r$ and that $O' \subseteq O_i$. In particular, $O'$ is an open and connected subset of $O_i$. Since $O'$ is a maximal connected subset of $O$, we also get that no continuous curve $\gamma \subseteq O_i$ can join points from $O_i \setminus O'$ and $O'$. Hence, $O'$ is a connected component of $O_i$ and $\diam(O') \geq c$ follows.

Let us prove (ii). Besides $O$ having positive radius near $N$, for $(O,N)$ being an admissible geometry in the sense of \cite[Ass.~2.2]{BHT} we need that there exists $\eps', \delta', K > 0$ such that 
\begin{enumerate}
	\item[\quad(c)] all $x,y \in O$ with $|x-y| < \delta'$ can be joined by an $\eps'$-cigar $\gamma$ with respect to $\cl{N}$, \emph{not necessarily contained in $O$}, such that $k(z, O) \coloneqq \inf_O k(z, \cdot) \leq K$ for all $z \in \gamma$, where $k(\cdot\,,\cdot)$ denotes the \emph{hyperbolic distance}
	\begin{align}
	k(x',y') \coloneqq \inf_{\substack{\text{$\gamma' \subseteq \R^d \setminus \cl{N}$ rect.} \\ \text{curve from $x'$ to $y'$}}} \; \int_{\gamma'} \dist(z', N)^{-1} \; |\d z'|.
	\end{align}
\end{enumerate}
Let $\eps, \delta$ be as in Definition~\ref{Def: locally eps-delta}. We check (c) for $\delta' \coloneqq \frac{\delta}{2}$, $\eps' \coloneqq \eps$ and $K \coloneqq 1$. If $x,y \in N_\delta$, then we can use the $\eps$-cigar provided by Definition~\ref{Def: locally eps-delta}, on noting that for $z \in \gamma \subseteq O$ we have $k(z,O) = 0$ and $\dist(z, \bd O \cap N_\delta) \leq \dist(z, N)$. Now, suppose $x \notin N_\delta$. Let $\gamma$ be the straight line segment to $y$ and take any $z \in \gamma$. First,
\begin{align*}
\frac{\eps|z-x| |z-y|}{|x-y|} \leq \eps|x-y| < \frac{\delta}{2} \leq \dist(x, N) - \frac{\delta}{2} \leq \dist(z, N)
\end{align*}
shows that $\gamma$ is an $\eps$-cigar with respect to $\cl{N}$. Second, on taking $\gamma'$ as the segment from $z$ to $x$ in the definition of hyperbolic distance, we find $k(z, O) \leq k(z,x) \leq \ell(\gamma') \frac{2}{\delta} \leq 1$.
\end{proof}

\begin{remark}
\label{Rem: consistency with BHT}
By definition, $(O,\bd O)$ is an admissible geometry in \cite{BHT} if and only if $O$ is an $(\eps,\delta)$-domain with positive radius.  Thus, all introduced notions of locally $(\eps,\delta)$-domains near the full boundary imply that $O$ has positive radius. This observation sharpens \cite[Lem.~3.7]{TripleMitrea-Brewster}.
\end{remark}

Bounded Lipschitz domains are locally uniform, see \cite[Rem.~5.11]{Darmstadt-KatoMixedBoundary} or \cite[Prop.~3.8]{Triebel-Wavelets}. In particular, the local $(\eps,\delta)$-condition near $N$ in the sense of \cite{TripleMitrea-Brewster} already comprises Lipschitz regular sets near $N$: In this case $U_i$ would be a bi-Lipschitz image of the unit cube $Q$ and $O_i$ the image of the lower half of $Q$ under the same transformation. The standard example of a fractal locally uniform domain is the von Koch snowflake~\cite[Fig.~3.5]{Triebel-Wavelets}. 

\subsection{Sobolev extensions}

The Hilbert space $\W^{1,2}(O)$ on an open set $O \subseteq \R^d$ is the collection of all $u \in \L^2(O)$ such that $\nabla u \in \L^2(O)^d$ with norm
\begin{align}
\label{intrinsic Sobolev norm}
 \|u\|_{\W^{1,2}(O)} \coloneqq \Big(\|u\|_{\L^2(O)}^2 + \|\nabla u\|_{\L^2(O)^d}^2 \Big)^{1/2}. 
\end{align}
We introduce the subspace of functions that vanish on a subset of $\cl{O}$ as follows.

\begin{definition}
\label{Def: W12D}
Let $O \subseteq \R^d$ be open and $D \subseteq \cl{O}$ be closed. The Hilbert space $\W^{1,2}_D(O)$ is the  $\W^{1,2}(O)$-closure of the set of test functions
\begin{align}
\label{Def: CDinfty}
 \C^\infty_D(O)\coloneqq \Bigl\{ u|_O: u\in \C^\infty_0(\R^d)\; \text{and} \; \dist(\supp(u),D)>0 \Bigr\}.
\end{align}
\end{definition}

For pure Dirichlet conditions we recover $\W^{1,2}_0(O) = \W^{1,2}_{\bd O}(O)$. If $O$ is an $(\eps,\delta)$-domain with positive radius, then Jones~\cite{Jones_ExtensionOperator} constructed a linear and bounded operator $\cE: \W^{1,2}(O) \to \W^{1,2}(\R^d)$ that does not alter functions on $O$. As such, $\cE$ is called an \emph{extension operator}. Since $\C_0^\infty(\R^d)$ is dense in $\W^{1,2}(\R^d)$, we then obtain $\W^{1,2}(O) = \W^{1,2}_\emptyset(O)$. In general we can extend by zero across $D$ and geometric properties of $O$ are only needed near $N$. This philosophy was followed in \cite[Thm.~3.8]{TripleMitrea-Brewster} by ``gluing together'' the Jones' extension operators on their charts. Very recently, Jones' original construction was altered in \cite{BHT} to produce an extension operator if $(O, N)$ is admissible as in (c) above. In view of Proposition~\ref{Prop: consistency with BHT}.(ii) we may state a special case of the main result of \cite{BHT} in the following

\begin{theorem}
\label{Thm: W12D extension}
Let $O \subseteq \R^d$ be open and $D \subseteq \bd O$ be closed. If $O$ is locally uniform near $\bd O \setminus D$, then there is a bounded linear extension operator $\cE: \W^{1,2}_D(O) \to \W^{1,2}_D(\R^d)$.
\end{theorem}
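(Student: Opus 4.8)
The proof is a reduction to the extension theorem of \cite{BHT}, so the plan has only two moving parts. First, set $N \coloneqq \bd O \setminus D$; since $D$ is closed in $\bd O$ the set $N$ is relatively open there and $\bd O \setminus N = D$. By Proposition~\ref{Prop: consistency with BHT}.(ii), the hypothesis that $O$ is locally uniform near $N$ implies that $(O,N)$ is an admissible geometry in the sense of \cite{BHT}; this theorem is, incidentally, the one place in the paper where ``positive radius near $N$'' from Definition~\ref{Def: locally eps-delta} enters, cf.\ Remark~\ref{Rem: locally eps-delta}. Second, \cite{BHT} provides, for every admissible pair $(O,N)$, a bounded linear extension operator $\cE \colon \W^{1,2}_D(O) \to \W^{1,2}_D(\R^d)$ with $D = \bd O \setminus N$. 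Combining the two steps completes the proof.

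For orientation on what is being invoked, \cite{BHT} modifies Jones' reflection construction \cite{Jones_ExtensionOperator} as follows. One takes a Whitney decomposition of $\R^d \setminus \cl{N}$ instead of $\R^d \setminus \cl{O}$---so that, away from $\cl{N}$, the operator is simply extension by zero and the Dirichlet condition on $D$ is automatic---and to each Whitney cube $Q$ on the exterior side of $O$ one associates a reflected cube $Q^\ast \subseteq O$ of comparable side length and at comparable distance; $\cE u$ is then a partition-of-unity sum whose term attached to $Q$ is the average of $u$ over $Q^\ast$ (degree-zero polynomials suffice in the first-order $\W^{1,2}$ setting). Admissibility of $(O,N)$---in particular the $\eps'$-cigar property with respect to $\cl{N}$ together with the uniform bound on the hyperbolic distance $k(\cdot,O)$---is exactly what supplies, for each $Q$, a chain of Whitney cubes inside $O$ joining $Q$ to $Q^\ast$ of length controlled by a fixed constant; boundedness $\W^{1,2}(O)\to\W^{1,2}(\R^d)$ then follows from telescoping and Poincar\'e estimates along these chains, as in Jones' original argument.

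The steps that carry the real weight, were one to prove the \cite{BHT} theorem from scratch rather than cite it, are two. The first is making the reflection function under the weakened connectivity hypothesis: the cigars are only available near $N$, only up to scale $\delta$, and---unlike in Jones' setting---may leave $O$; the uniform hyperbolic-distance bound is what compensates, by guaranteeing that the matched cube $Q^\ast$ can still be reached inside $O$. The second, and the one I would expect to be the genuine obstacle, is verifying that $\cE$ maps the Dirichlet subspace $\W^{1,2}_D(O)$ into $\W^{1,2}_D(\R^d)$ and not merely $\W^{1,2}(O)$ into $\W^{1,2}(\R^d)$: away from $\cl{N}$ this is immediate since $\cE u$ is there the zero extension of $u$, but on a neighbourhood of $\cl{N} \cap D$ the reflection is active and one must argue at the level of traces or capacities that $\cE u$ still has vanishing boundary values on $D$, rather than relying on the naive heuristic that $\cE$ keeps supports away from $D$.
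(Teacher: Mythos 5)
Your reduction is exactly the paper's intended argument: set $N = \bd O \setminus D$, invoke Proposition~\ref{Prop: consistency with BHT}.(ii) to see that $(O,N)$ is an admissible geometry in the sense of \cite{BHT}, and then cite the main result of \cite{BHT} for the extension operator $\cE: \W^{1,2}_D(O) \to \W^{1,2}_D(\R^d)$. The paper itself states Theorem~\ref{Thm: W12D extension} as a special case of \cite{BHT} and offers no further proof, so your additional orientation on the Whitney/reflection construction inside \cite{BHT} goes beyond what the paper records but does not diverge from it.
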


There are further natural choices for the test function class $\C_D^\infty(O)$ in Definition~\ref{Def: CDinfty} that all lead to the same $\W^{1,2}(O)$-closure, see \cite{BHT} for details.

\subsection{Corkscrew condition and porosity} 

We establish the corkscrew condition in our context. As before, we write $N_\delta$ for the set of points with distance to $N$ less than $\delta$.

\begin{proposition}
\label{Prop: locally eps-delta yields corkscrew}
Suppose that $O \subseteq \R^d$ is open and locally an $(\eps,\delta)$-domain near $N \subseteq \bd O$. Then there exists a constant $\kappa \in (0,1]$ such that:
\begin{align}
\label{eq: Def corkscrew Schlauch}
\forall x \in \cl{N_{\delta/2} \cap O},\, r \leq 1 \quad \exists z \in \B(x,r) \; : \; \B(z, \kappa r) \subseteq O \cap \B(x,r).
\end{align}
\end{proposition}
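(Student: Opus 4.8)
The plan is to produce the corkscrew ball by walking a short distance into $O$ along an $\eps$-cigar and then stopping as soon as we are comfortably far from the boundary. First I would fix $x \in \cl{N_{\delta/2} \cap O}$ and $r \le 1$. Since the conclusion is a closed condition in $x$ (for fixed $r$ the set of admissible corkscrew points $z$ can be taken to vary semicontinuously), it suffices to treat $x \in N_{\delta/2} \cap O$ and then pass to the closure; so assume $\dist(x,N) < \delta/2$ and $x \in O$. Shrinking $r$ if necessary we may also assume $r < \delta/4$, since enlarging $\kappa$-requirements only at the small-scale end is harmless: for $r \ge \delta/4$ one reduces to the scale $r_0 := \delta/8$ by noting $\B(z,\kappa r_0) \subseteq O \cap \B(x,r_0) \subseteq O \cap \B(x,r)$ and replacing $\kappa$ by $\kappa r_0/r \ge \kappa\delta/8$.

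The key step is the following. Pick any point $y \in O$ with $0 < |x-y| < \min(\delta, r)/2$; such a $y$ exists because $x \in O$ is an interior point. Both $x,y$ lie in $O \cap N_\delta$ (as $\dist(y,N) \le |x-y| + \dist(x,N) < \delta/2 + \delta/2 = \delta$) and $|x-y| < \delta$, so Definition~\ref{Def: locally eps-delta}.(i) provides an $\eps$-cigar $\gamma \subseteq O$ from $x$ to $y$ with $\ell(\gamma) \le \eps^{-1}|x-y|$. Let $z$ be the point on $\gamma$ that is at arclength distance $\tfrac12 |x-y|$ from $x$ (this exists since $\ell(\gamma) \ge |x-y| > \tfrac12|x-y|$), so $|z-x| \ge$ some definite fraction of $|x-y|$ — more precisely one has the crude bound $|z - x| \le \tfrac12|x-y|$ and, using that $\gamma$ also has length at most $\eps^{-1}|x-y|$ on the $y$-side, $|z-y| \ge |x-y| - |z-x| \ge \tfrac12|x-y|$ while $|z-y| \le \ell(\gamma) \le \eps^{-1}|x-y|$. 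Plugging into \eqref{eq: eps-delta} and also using $\dist(z,\bd O \cap N_\delta) \le \dist(z,N)$ together with $z \in N_\delta$ (since $\dist(z,N) \le |z-x| + \dist(x,N) < \tfrac12|x-y| + \delta/2 < \delta$), one gets
\begin{align*}
\dist(z, \bd O) \ge \dist(z, \bd O \cap N_\delta) \ge \frac{\eps\, |z-x|\,|z-y|}{|x-y|} \gtrsim \eps\, |z - x|,
\end{align*}
where the implicit constant is absolute; here I used that $z$ is genuinely in the interior of the cigar so both factors $|z-x|$ and $|z-y|$ are bounded below by a fixed multiple of $|x-y|$. Wait — the clean way to lower-bound $|z-x|$ is: choose $z$ at arclength $s = \tfrac12|x-y|$ from $x$ along $\gamma$; then $|z-x|$ could be small if $\gamma$ backtracks, so instead I would choose $z$ as the \emph{first} point along $\gamma$ (in arclength) for which $|z-x| = \eps|x-y|/4$; such a point exists because $|y - x| = |x-y| > \eps|x-y|/4$ and $\gamma$ is continuous, and for this choice $|z - x| = \eps|x-y|/4$ exactly, while $|z-y| \ge |x-y| - |z-x| = (1 - \eps/4)|x-y| \ge \tfrac34|x-y|$. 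Then \eqref{eq: eps-delta} gives $\dist(z,\bd O) \ge \eps \cdot \tfrac{\eps}{4}|x-y| \cdot \tfrac34 |x-y| / |x-y| = \tfrac{3\eps^2}{16}|x-y|$.

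Finally I set $\kappa := \tfrac{3\eps^2}{32}$ (say) and check the two inclusions in \eqref{eq: Def corkscrew Schlauch} for this $z$. Since $|x-y|$ can be taken as close to $r$ as we like — concretely choose $y$ with $|x-y| = \tfrac12\min(\delta,r) = \tfrac12 r$ under our reduction $r<\delta/4$ (or just let $|x - y| \to \min(\delta,r)/2$ and use a limiting/compactness argument, or simply fix $|x-y| = \tfrac14 r$ to be safe) — we get $|z-x| = \tfrac{\eps}{4}|x-y| \le \tfrac{1}{16} r < r$, so $z \in \B(x,r)$; and $\B(z,\kappa r) \subseteq O$ because $\kappa r \le \tfrac{3\eps^2}{32} r \le \tfrac{3\eps^2}{16}|x-y| \cdot \tfrac{r}{2|x-y|}$... let me just say: with $|x-y| = \tfrac14 r$ one has $\dist(z,\bd O) \ge \tfrac{3\eps^2}{16}\cdot\tfrac14 r = \tfrac{3\eps^2}{64} r$, so taking $\kappa := \tfrac{3\eps^2}{64}$ gives $\B(z,\kappa r) \subseteq O$; and $\B(z,\kappa r) \subseteq \B(x, |z-x| + \kappa r) \subseteq \B(x, \tfrac{\eps}{16} r + \tfrac{3\eps^2}{64}r) \subseteq \B(x,r)$ since $\eps \le 1$. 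This proves \eqref{eq: Def corkscrew Schlauch}, after the reduction to $r<\delta/4$ described above (which only decreases $\kappa$ by the harmless factor $\delta/8$ when $r$ is large).

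The main obstacle is the one flagged in the middle paragraph: a naive arclength choice of $z$ need not stay a definite distance from $x$, so the point along the cigar must be chosen as the first exit time of a small sphere around $x$, and then both geometric factors in the cigar inequality \eqref{eq: eps-delta} have to be controlled simultaneously from below. Everything else — the reduction to small $r$, the passage from $N_{\delta/2}\cap O$ to its closure, and the final two ball inclusions — is routine bookkeeping with the constants.
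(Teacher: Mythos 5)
There are two genuine gaps, and the first one is the crux of the proposition.

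\textbf{(1) Existence of a comparably distant point $y \in O$.} You want $y \in O$ with $|x-y| \approx r$ (you eventually fix $|x-y| = \tfrac14 r$), but the only justification you give is that ``$x \in O$ is an interior point,'' which yields $y$ arbitrarily \emph{close} to $x$, not at distance $\approx r$. If the connected component of $O$ containing $x$ has tiny diameter, no such $y$ exists, and the cigar argument then only produces a ball of radius comparable to $\diam(O')$, not to $r$. This is exactly where the paper invokes the positive-radius condition (Definition~\ref{Def: locally eps-delta}(ii)): it proves by contradiction that there is $y \in O$ with $\tfrac{r}{2} \le |x-y| \le \tfrac{3r}{4}$ by showing that otherwise the component $O'$ containing $x$ would satisfy $O' \subseteq \B(x,\tfrac{r}{2}) \subseteq N_\delta$, hence $\diam(O') < c$, while also $\bd O' \cap N \neq \emptyset$, contradicting (ii). Your proposal never uses (ii), and indeed, as written it would ``prove'' the result without that hypothesis, which is false. (Your reduction to small $r$ also needs $r$ small compared to the positive-radius constant $c$, which is why the paper takes $C = \tfrac12\min(\delta,c,1)$, not just a multiple of $\delta$.)

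\textbf{(2) Controlling the full boundary.} You write $\dist(z,\bd O) \ge \dist(z, \bd O \cap N_\delta)$, but this inequality is backwards: $\bd O \cap N_\delta \subseteq \bd O$, so the distance to the smaller set is the \emph{larger} quantity. The cigar inequality only bounds $\dist(z,\bd O \cap N_\delta)$ from below; you must handle $\bd O \setminus N_\delta$ separately. The paper does this via $\dist(z, \R^d \setminus N_\delta) \ge \tfrac{\delta}{2} - |x-z| \ge \tfrac{r}{2}$, so that $\dist(z,\bd O) \ge \min\bigl(\dist(z,\bd O \cap N_\delta),\, \dist(z,\R^d\setminus N_\delta)\bigr) \ge \kappa r$. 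In your regime $r < \delta/4$ this is easy to add, but it must be said, and the inequality as you wrote it is simply false.

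On a minor note, your worry about an arclength-parametrized $z$ ``backtracking'' towards $x$ is a non-issue here: as in the paper, one just picks $z \in \gamma$ with $|z-x| = \tfrac12|x-y|$ by the intermediate value theorem applied to $t \mapsto |\gamma(t)-x|$, which is equivalent to your ``first exit time'' device. That part of your plan is sound; the two items above are the real gaps.
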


\begin{proof}
Let  $C \coloneqq \frac{1}{2} \min(\delta,c,1)$. It suffices to obtain some $\kappa$ that works for all radii $r \leq C$ and all $x \in N_{\delta/2} \cap O$. Indeed, for $r\leq 1$ we find $z\in \B(x,Cr) \subseteq \B(x,r)$ with $\B(z, (\kappa C)r) \subseteq O\cap \B(x,r)$, so we only need to use $\kappa C$ instead of $\kappa$. Finally, with a constant \emph{strictly} smaller than $\kappa C$ and a limiting argument, we can allow all $x\in \overline{N_{\delta/2} \cap O}$.

Let $x \in N_{\delta/2} \cap O$. We claim that there is $y \in O$ satisfying $\frac{r}{2} \leq |x-y| \leq \frac{3r}{4}$. Suppose this was not true and let $O'$ be the connected component of $O$ that contains $x$. Then $O' \subseteq \B(x, \frac{r}{2}) \subseteq N_\delta$ and we also have $\B(x,\delta) \cap N_\delta \cap O \subseteq O'$ since all points in the left-hand set can be joined to $x$ via a curve in $O$. The first inclusion gives $\diam(O') < c$, whereas the second one gives $\bd O' \cap N \neq \emptyset$ in contradiction with Definition~\ref{Def: locally eps-delta}. 

We fix any $y \in O$ as above. Then $|x-y| \leq \frac{\delta}{2}$ and in particular $y \in N_\delta \cap O$. Let $\gamma \subseteq O$ be the joining $\eps$-cigar with respect to $\bd O \cap N_\delta$. By continuity we pick $z \in \gamma$ with $|z-x| = \frac{1}{2}|x-y|$ and verify the required properties for $\kappa \coloneqq \frac{\eps}{8}$. First, we have $\B(z, \kappa r) \subseteq \B(x,r)$ by construction. Second, $r \leq \frac{\delta}{2}$ yields  $\dist(z, \R^d \setminus N_\delta) \geq \frac{\delta}{2} - |x-z| \geq \frac{r}{2}$. Third, $|z-y| \geq \frac{1}{2}|x-y|$ and $|x-y| \geq \frac{r}{2}$ plugged into \eqref{eq: eps-delta} give $\dist(z, \bd O \cap N_\delta) \geq \kappa r$. The last two bounds imply $\B(z, \kappa r) \subseteq \R^d \setminus \bd O$. But as $z \in O$ we must have $\B(z, \kappa r) \subseteq O$.
\end{proof}
The property above is closely related to porosity in the following sense.

\begin{definition}
\label{Def: Porous}
A set $E \subseteq \R^d$ is \emph{porous} if there exists $\kappa \in (0,1]$ with the property that:
\begin{align}
\label{eq: Def porosity}
\forall x \in E, \, r \leq 1 \quad \exists z \in \B(x,r) \; : \; \B(z, \kappa r) \subseteq \B(x,r) \setminus E.
\end{align}
\end{definition}

Proposition~\ref{Prop: locally eps-delta yields corkscrew} entails in particular that $N$ is porous. As a non-trivial example let us mention that Ahlfors--David regular sets are porous~\cite[Lem.~A.7]{BE}. This leads to the following important corollary of Proposition~\ref{Prop: locally eps-delta yields corkscrew}.

\begin{corollary}
\label{Cor: Boundary porous}
Under the assumptions of Theorem~\ref{Thm: main kato result} the full boundary $\bd O$ is porous.
\end{corollary}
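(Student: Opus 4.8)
The plan is to combine the two ingredients that the hypotheses supply at complementary parts of $\bd O$: the corkscrew condition near $N \coloneqq \bd O \setminus D$ from Proposition~\ref{Prop: locally eps-delta yields corkscrew}, and the porosity of $D$, which holds since $D$ is Ahlfors--David regular, hence porous by \cite[Lem.~A.7]{BE}. The guiding observation is that a ball $\B(z,\rho)$ disjoint from $\bd O$ is connected, so it lies either in $O$ or in the open set $\R^d \setminus \cl{O}$; in both cases it witnesses the porosity condition \eqref{eq: Def porosity} for $E = \bd O$. So it suffices to produce, with a single constant $\kappa$, for every $x \in \bd O$ and every $r \le 1$ a ball $\B(z,\kappa r) \subseteq \B(x,r)$ that avoids $\bd O$.

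Let $\eps,\delta$ be the parameters of local uniformity near $N$, let $\kappa_O$ be the corkscrew constant from Proposition~\ref{Prop: locally eps-delta yields corkscrew}, and let $\kappa_D \in (0,1]$ be a porosity constant for $D$. I would first handle radii $r \le \min(1,\tfrac{\delta}{8})$ and split into two cases. If $\dist(x,N) < \tfrac{\delta}{4}$, then, since $x \in \bd O \subseteq \cl{O}$ and $O$ is open, there are points $y \in O$ with $|x-y|$ arbitrarily small, and any such $y$ with $|x-y| < \tfrac{\delta}{4}$ satisfies $\dist(y,N) < \tfrac{\delta}{2}$; hence $x \in \cl{N_{\delta/2} \cap O}$ and Proposition~\ref{Prop: locally eps-delta yields corkscrew} provides $z \in \B(x,r)$ with $\B(z,\kappa_O r) \subseteq O \cap \B(x,r)$, a ball disjoint from $\bd O$. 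If instead $\dist(x,N) \ge \tfrac{\delta}{4}$, then $r < \dist(x,N)$, so $\B(x,r) \cap N = \emptyset$; in particular $x \notin N$, hence $x \in D$ because $\bd O = D \cup N$, and porosity of $D$ yields $z \in \B(x,r)$ with $\B(z,\kappa_D r) \subseteq \B(x,r) \setminus D$, which therefore avoids $D \cup N = \bd O$. Thus the claim holds for such $r$ with $\kappa_1 \coloneqq \min(\kappa_O,\kappa_D)$.

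It then remains to pass to all $r \le 1$, which is only an issue when $\tfrac{\delta}{8} < 1$. In that case, for $\tfrac{\delta}{8} < r \le 1$ I would apply the previous step at the smaller radius $\rho \coloneqq \tfrac{\delta}{8}$: the resulting ball satisfies $\B(z,\kappa_1 \rho) \subseteq \B(x,\rho) \setminus \bd O \subseteq \B(x,r) \setminus \bd O$, and $\kappa_1 \rho \ge (\kappa_1 \rho)\, r$ because $r \le 1$. Hence \eqref{eq: Def porosity} holds for $\bd O$ with $\kappa \coloneqq \kappa_1 \min(1,\tfrac{\delta}{8})$, and $\bd O$ is porous.

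I do not expect a genuine obstacle; the argument is essentially bookkeeping. The one point requiring a little care is the transition from ``$x$ close to $N$'' to ``$x \in \cl{N_{\delta/2}\cap O}$'', which is precisely where one uses that a boundary point of the open set $O$ is approximable by interior points, making Proposition~\ref{Prop: locally eps-delta yields corkscrew} applicable even though it is phrased for points accumulated by $O$ rather than for boundary points as such.
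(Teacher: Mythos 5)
Your argument is correct and complete, but it takes a genuinely different route from the paper. The paper's proof first records that $N \coloneqq \bd O \setminus D$ is porous as a direct consequence of Proposition~\ref{Prop: locally eps-delta yields corkscrew} (a ball inside $O$ is disjoint from $\bd O$, hence from $N$) and that $D$ is porous because it is Ahlfors--David regular; it then reduces the corollary to the abstract fact that a finite union of porous sets is porous, proved by a short two-case argument: given a ball $B$ centered in $E_0$, porosity of $E_0$ yields a comparable ball $B' \subseteq B \setminus E_0$, and either $\tfrac{1}{2}B'$ already avoids $E_1$, or one applies porosity of $E_1$ at an intersection point of $\tfrac{1}{2}B'$ with $E_1$ to produce a comparable sub-ball avoiding both. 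Your proof replaces this union lemma by a case split on $\dist(x,N)$, applying the corkscrew ball directly when $x$ is near $N$ and porosity of $D$ directly when $x$ is far from $N$ (using that a small ball at $x$ cannot meet $N$), and then rescaling from $r\le \delta/8$ to $r\le 1$. Both arguments are sound; the paper's yields a re-usable general principle and is slightly shorter once that principle is stated, while yours is more hands-on and avoids introducing the abstract closure-under-unions fact, at the price of some bookkeeping with the scale $\delta$. One minor remark: your opening observation about the connectedness of a ball disjoint from $\bd O$ (hence lying in $O$ or in $\R^d\setminus\cl{O}$) is not actually needed anywhere in the argument; what matters for Definition~\ref{Def: Porous} is only that the produced balls avoid $\bd O$, which you establish directly in both cases.
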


\begin{proof}
In view of the examples given above, the claim amounts to showing that the union of two porous sets $E_0$, $E_1$ is again porous. To this end, start without loss of generality with a ball $B$ centered in $E_0$ and obtain a ball $B' \subseteq B \setminus E_0$ with comparable radius. Then either $\frac{1}{2}B' \subseteq B \setminus (E_0 \cup E_1)$ or $\frac{1}{2}B' \cap E_1 \neq \emptyset$. In the first case we are done and in the second case porosity of $E_1$, applied with $r= \frac{1}{2} \rad(B')$ and $x$ an intersection point, furnishes a comparably sized ball $B'' \subseteq B' \setminus E_1 \subseteq B \setminus (E_0 \cup E_1)$.
\end{proof}
%%%%%%%%%%%%%%%%%%%%%%%%%%%%%%%%%%%%%%%%%%%%%%%%%%%%%%%%%%%%%%%%%%%%%%%%%%%%%%%%%%%%%%%%%%%%%%%%%%%%%%%%%%%%%%%%%%%%%%%%%%%%%%%%%%%%%%%%%%%%%%%%%%%%%%%%%%%%%%%%%%%%
\section{Definition of the elliptic operator}
\label{Sec: Elliptic operator}

Throughout, we assume that $O \subseteq \R^d$ is open and $D \subseteq \bd O$ is closed. Identifying $\L^2(O)^m$ with its anti-dual space $(\L^2(O)^m)^*$, we have dense embeddings
\begin{align}
\W^{1,2}_D(O)^m \subseteq\L^2(O)^m \subseteq (\W^{1,2}_D(O)^m)^*.
\end{align}
We assume that the coefficients $a_{ij}, b_i, c_j, d: O \to \cL(\IC^m)$ in \eqref{L} are \emph{bounded and measurable}. We group them as $A \coloneqq (a_{ij})_{ij}$, $b \coloneqq (b_i)_i$ and $c \coloneqq (c_j)_j$ in the coefficient matrix
\begin{align}
\label{eq: coefficient matrix}
\begin{bmatrix} d & c \\ b & A \end{bmatrix} : O \to \cL(\IC^m)^{(1+d) \times (1+d)},
\end{align}
and we introduce for a $\IC^m$-valued function $u$ the gradient $\nabla u \coloneqq (\partial_i u)_i$ as a vector in $(\IC^m)^d$. Here, $i$ and $j$ always refer to column and row notation, respectively. With this notation, the sesquilinear form in \eqref{a} can be rewritten as
\begin{align}
\label{eq: a rewritten}
 a: \W^{1,2}_D(O)^m \times \W^{1,2}_D(O)^m \to \IC, \qquad 
a(u,v) =  \int_O \; \begin{bmatrix} d & c \\ b & A \end{bmatrix} \begin{bmatrix} u \\ \nabla u \end{bmatrix}  \cdot \cl{\begin{bmatrix} v \\ \nabla v \end{bmatrix}} \d x.
\end{align}
Our \emph{ellipticity} assumption is the lower bound \eqref{Garding}. Note also that $(\|\cdot\|_2^2 + \|\nabla \cdot\|_2^2)^{1/2}$ is the Hilbert space norm on $\W_D^{1,2}(O)^m$.

The Lax-Milgram lemma associates with $a$ the bounded and invertible operator
\begin{align*}
\cL: \W^{1,2}_D(O)^m \to (\W^{1,2}_D(O)^m)^*, \qquad \langle \cL u, v \rangle =  a(u,v).
\end{align*}
We define $L$ to be the maximal restriction of $\cL$ to an operator in $\L^2(O)^m$. Then $L$ is an invertible, maximal accretive, sectorial operator in $\L^2(O)$ of some angle $\omega \in [0,\pi/2)$, see~\cite[Prop.~7.3.4]{Haase}. The adjoint $L^*$ is associated in the same way with $a^*(u,v) \coloneqq \cl{a(v,u)}$, see \cite[Thm.~VI\SS 2.5]{Kato}. Consequently, $L$ is self-adjoint when the matrix in \eqref{eq: coefficient matrix} is Hermitian. 

The square root $\sqrt{L}$ is defined via the sectorial functional calculus for $L$. It is invertible and maximal accretive~\cite[Cor.~7.1.13]{Haase}. In particular, it coincides with Kato's original definition of the square root~\cite[Thm.~V\SS 3.35]{Kato} as the unique maximal accretive operator in $\L^2(O)^m$ that satisfies $(\sqrt{L})^2 = L$.

It will be convenient to write $L$ as a composition of differential and multiplication operators, more in the spirit of the formal definition \eqref{L}. To this end, we introduce the closed and densely defined operator 
\begin{align}
\label{eq: nablaD}
\nabla_D: \W^{1,2}_D(O)^m \subseteq\L^2(O)^m \to \L^2(O)^{dm}, \qquad \nabla_D u = \nabla u
\end{align}
and let $-\div_D$ be its (unbounded) adjoint. In view of \eqref{eq: a rewritten} it follows that
\begin{align}
\label{eq: L as composition}
L 
= \begin{bmatrix} 1 & -\div_D \end{bmatrix}
\begin{bmatrix} d & c \\ b & A \end{bmatrix}
\begin{bmatrix}  1 \\ \nabla_D \end{bmatrix}
\end{align}
with maximal domain in $\L^2(O)^m$. Integration by parts reveals $\div_D (u_i)_i = \sum_{i=1}^d \partial_i u_i$ for $(u_i)_i \in (\C_0^\infty(O)^m)^d$ but in this generality no explicit description of $\dom(\div_D)$ is available.
%%%%%%%%%%%%%%%%%%%%%%%%%%%%%%%%%%%%%%%%%%%%%%%%%%%%%%%%%%%%%%%%%%%%%%%%%%%%%%%%%%%%%%%%%%%%%%%%%%%%%%%%%%%%%%%%%%%%%%%%%%%%%%%%%%%%%%%%%%%%%%%%%%%%%%%%%%%%%%%%%%%%
\section{Higher regularity for fractional powers of the Laplacian}
\label{Sec: Laplace}

The goal of this section is to show Theorem~\ref{Thm: Laplace}, thereby accomplishing Step~2 from the introduction. In the whole section we fix $\bO$ and $\bD$ satisfying the assumptions from Theorem~\ref{Thm: Laplace}. It suffices to treat the case $m=1$ since $-\Delta_{\bD}$ in $\L^2(\bO)^m$ acts componentwise. We adopt the convention that function spaces without reference to an underlying set are understood on the whole space, e.g.\ we write $\W^{1,2}$ instead of $\W^{1,2}(\R^d)$. 

\subsection{Fractional Sobolev spaces on open sets with vanishing trace condition}
\label{Sec: Spaces}

We introduce the \emph{(fractional) Sobolev spaces} of regularity $s\in (0,\frac{3}{2})$. Whereas this is a classical topic on $\R^d$, different definitions suiting different purposes are possible on $\bO$. We follow the treatment in~\cite{BE} with a focus on interpolation theory.

If $s \in (0,1)$, then $\W^{s,2}$ consists of all $u\in \L^2$ such that
\begin{align}
	\|u\|_{\W^{s,2}}^2 \coloneqq \|u\|_{\L^2}^2 + \iint_{\substack{x,y \in \R^d \\ |x-y| < 1}} \frac{|u(x) - u(y)|^2}{|x-y|^{d + 2s}} \d x \d y < \infty.
\end{align}
If $s\in (0,\frac{1}{2})$, then $\W^{1+s,2}$ consists of all $u\in \L^{2}$ with $\|u\|_{\W^{1+s,2}}^2 \coloneqq \|u\|_{\L^2}^2 + \|\nabla u\|_{\W^{s,2}}^2 < \infty$. It will be convenient to set $\W^{0,2} \coloneqq \L^2$. It follows from the structure of the norms that these spaces are Hilbert spaces and that $\nabla$ maps $\W^{1+s,2}$ into $\W^{s,2}$. The \emph{Bessel potential space} 
\begin{align}
\label{eq: Bessel spaces}
\H^{s,2} \coloneqq \big\{u \in \L^2 : \|u\|_{\H^{s,2}} \coloneqq \|(1-\Delta)^{\frac{s}{2}}u\|_{\L^2} < \infty \big\}
\end{align}
coincides with $\W^{s,2}$ by \cite[Sec.~2.4.2.~Rem.~2]{Triebel}. Here, $\Delta$ is the Laplacian in~$\R^d$. 

Since $\bD$ is a $(d-1)$-set, see Remark~\ref{Rem: AD} for this terminology, a version of the Lebesgue differentiation theorem allows us to define traces on $\bD$. The following is a weakened version of \cite[Thm.~VI.1]{JW} that suffices for our purpose.

\begin{proposition}
\label{prop: JW}
Let $s \in (\frac{1}{2}, \frac{3}{2})$ and $u \in \W^{s,2}$. For $\cH^{d-1}$-almost every $x \in \bD$ the limit
\begin{align*}
(\Res_{\bD} u)(x) \coloneqq \lim_{r\to 0} \frac{1}{|\B(x,r)|} \int_{\B(x,r)} u(y) \d y
\end{align*}
exists. The restriction operator $\Res_{\bD}$ maps $\W^{s,2}$ boundedly into $\L^2(\bD, \cH^{d-1})$. 
\end{proposition}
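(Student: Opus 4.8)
The plan is to realize $\Res_{\bD}u$ as a limit of solid averages and control it by a telescoping argument; this is the classical approach to trace theorems on $(d-1)$-sets, essentially the one behind \cite[Thm.~VI.1]{JW}, which could also simply be quoted. Throughout write $u_{x,r}\coloneqq\frac{1}{|\B(x,r)|}\int_{\B(x,r)}u$. First I would reduce to the case $s\in(\tfrac12,1)$: for $s\in[1,\tfrac32)$ one has continuous embeddings $\W^{s,2}\hookrightarrow\W^{1,2}\hookrightarrow\W^{s'',2}$ for any fixed $s''\in(\tfrac12,1)$ -- the first from the definition of $\W^{1+s',2}$, the second since $\W^{1,2}=\H^{1,2}\hookrightarrow\H^{s'',2}=\W^{s'',2}$ (cf.\ \eqref{eq: Bessel spaces}) -- so it suffices to construct $\Res_{\bD}$ on $\W^{s'',2}$ and restrict it afterwards.

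So fix $s\in(\tfrac12,1)$ and, for $x\in\bD$ and $0<r\le\tfrac12$, set $E(x,r)\coloneqq\iint_{\B(x,r)\times\B(x,r)}|u(y)-u(z)|^2|y-z|^{-d-2s}\d y\d z$. The first key estimate is the pointwise bound
\[
|u_{x,r/2}-u_{x,r}|\lesssim r^{s-d/2}\,E(x,r)^{1/2},
\]
obtained by writing the difference as the double average of $u(y)-u(z)$ over $\B(x,r)\times\B(x,r)$ and applying Cauchy--Schwarz with weight $|y-z|^{d+2s}$, together with $\iint_{\B(x,r)\times\B(x,r)}|y-z|^{d+2s}\lesssim r^{3d+2s}$. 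The second key estimate is a Fubini computation combined with the Ahlfors--David \emph{upper} bound $\cH^{d-1}(\B(y,r)\cap\bD)\lesssim r^{d-1}$ (Definition~\ref{Def: AD} and Remark~\ref{Rem: AD}), which yields
\[
\int_{\bD}E(x,r)\d\cH^{d-1}(x)\lesssim r^{d-1}F(r),\qquad F(r)\coloneqq\iint_{|y-z|<2r}\frac{|u(y)-u(z)|^2}{|y-z|^{d+2s}}\d y\d z,
\]
where $F(r)\le\|u\|_{\W^{s,2}}^2$ and $F(r)\to0$ as $r\to0$ by dominated convergence. Squaring the first display, integrating over $\bD$, and inserting the second gives $\|u_{\cdot,r/2}-u_{\cdot,r}\|_{\L^2(\bD,\cH^{d-1})}\lesssim r^{s-1/2}F(r)^{1/2}$; since $s>\tfrac12$, this is summable along $r=2^{-j}$ with sum $\lesssim\|u\|_{\W^{s,2}}$.

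It follows that $\sum_{j\ge0}\bigl|u_{x,2^{-j-1}}-u_{x,2^{-j}}\bigr|$ belongs to $\L^2(\bD,\cH^{d-1})$, so for $\cH^{d-1}$-a.e.\ $x\in\bD$ the dyadic averages $u_{x,2^{-k}}$ form a Cauchy sequence; call the limit $(\Res_{\bD}u)(x)$, which is linear in $u$. Passing to arbitrary radii is routine: for $2^{-k-1}\le r\le2^{-k}$ the same Cauchy--Schwarz bound gives $|u_{x,r}-u_{x,2^{-k}}|\lesssim 2^{-k(s-d/2)}E(x,2^{-k})^{1/2}$ uniformly in such $r$, and squaring and integrating over $\bD$ yields $\lesssim 2^{-2k(s-1/2)}F(2^{-k})$, a summable sequence, so $\sup_{2^{-k-1}\le r\le 2^{-k}}|u_{x,r}-u_{x,2^{-k}}|\to0$ for a.e.\ $x$ and hence $\lim_{r\to0}u_{x,r}=(\Res_{\bD}u)(x)$ a.e. Finally, writing $\Res_{\bD}u=u_{\cdot,1}+\sum_{j\ge0}(u_{\cdot,2^{-j-1}}-u_{\cdot,2^{-j}})$ and using the triangle inequality in $\L^2(\bD,\cH^{d-1})$, the summability above, and the elementary estimate $\int_{\bD}|u_{x,1}|^2\d\cH^{d-1}(x)\le\int_{\bD}\tfrac{1}{|\B(x,1)|}\int_{\B(x,1)}|u|^2\d\cH^{d-1}(x)\lesssim\|u\|_2^2$ (Fubini and $\cH^{d-1}(\B(y,1)\cap\bD)\lesssim1$), one obtains $\|\Res_{\bD}u\|_{\L^2(\bD,\cH^{d-1})}\lesssim\|u\|_{\W^{s,2}}$.

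I expect the only genuinely non-formal point to be the bookkeeping of exponents in the two key displays: the loss $r^{s-d/2}$ from Cauchy--Schwarz must be compensated \emph{exactly} by the gain $r^{(d-1)/2}$ coming from Ahlfors--David regularity, so as to leave the summable factor $r^{s-1/2}$ with $s>\tfrac12$. This is precisely where both the Sobolev regularity of $u$ and the dimension of $\bD$ enter; all remaining steps are standard measure theory.
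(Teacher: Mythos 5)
Your argument is correct and is exactly the standard telescoping-of-averages proof underlying Jonsson--Wallin's restriction theorem \cite[Thm.~VI.1]{JW}, which the paper cites in lieu of a proof; the exponent bookkeeping ($r^{s-d/2}$ from Cauchy--Schwarz balancing $r^{(d-1)/2}$ from the Ahlfors--David upper bound, leaving the summable $r^{s-1/2}$) is right. Only cosmetic points remain, such as starting the dyadic sum at $j\ge 1$ so that $2r\le 1$ in $F(r)$, or noting that $F(1)\lesssim\|u\|_{\W^{s,2}}^2$ by splitting off $1\le|y-z|<2$, and passing through a concentric ball of doubled radius when applying the Ahlfors--David upper bound to balls not centered on $\bD$.
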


With the trace operator at hand, we introduce the closed subspace $\W^{s,2}_{\bD}$ of $\W^{s,2}$ by 
\begin{align}
\W^{s,2}_{\bD}\coloneqq \bigl\{u\in \W^{s,2}: \Res_{\bD} u = 0 \bigr\}.
\end{align}
In the case $s=1$ this notion is consistent with Definition~\ref{Def: W12D}, see for instance~\cite[Lem.~3.3]{BE}. Finally, we denote the distributional restriction to $\bO$ by $|_{\bO}$ and define fractional Sobolev spaces on $\bO$ by restriction.

\begin{definition}
\label{Def: spaces on open sets}
Let $s \in [0,\frac{3}{2})$ and $t \in (\frac{1}{2}, \frac{3}{2})$. Put $\W^{s,2}(\bO) \coloneqq \{ u|_{\bO}: u\in \W^{s,2} \}$ and $\W^{t,2}_{\bD}(\bO) \coloneqq \{ u|_{\bO}: u\in \W^{t,2}_{\bD} \}$ and equip them with quotient norms.
\end{definition}

\begin{remark}
\label{Rem: spaces on open sets}
These spaces are again Hilbert spaces by construction as quotients of Hilbert spaces. Since $\bO$ is interior thick, we have that $\W^{t,2}_{\bD}(\bO)$ is a closed subspace of $\W^{t,2}(\bO)$ with an equivalent norm~\cite[Lem.~3.4]{BE}. As a cautionary tale, let us stress that in the context of this section $\W^{1,2}(\bO)$ is embedded into but possibly \emph{not} equal to the collection of all $u \in \L^2(\bO)$ with $\nabla u \in \L^2(\bO)^d$ and norm \eqref{intrinsic Sobolev norm}. However, as a consequence of Theorem~\ref{Thm: W12D extension}, the definition of $\W^{1,2}_{\bD}(\bO)$ above coincides with the original one from Definition~\ref{Def: W12D} up to equivalent norms.
\end{remark}

Spaces of negative smoothness are defined by duality extending the inner product on~$\L^2(\bO)$.

\begin{definition}
For $s \in [0,\frac{3}{2})$ and $t \in (\frac{1}{2}, \frac{3}{2})$ let $\W^{-s,2}(\bO)$ and $\W^{-t,2}_{\bD}(\bO)$ be the anti-dual spaces of $\W^{s,2}(\bO)$ and $\W^{t,2}_{\bD}(\bO)$, respectively.
\end{definition}

We turn our focus to the density of test functions in these spaces. The following proposition follows as a special case of Netrusov's Theorem \cite[Thm.~10.1.1]{Adams-Hedberg} if one replaces the appearing capacities by the Hausdorff measure using \cite[Thm.~5.1.9]{Adams-Hedberg}. To make the statement more concise, we use the concept of \emph{Hausdorff co-dimension}, defined by
$$\codim_\cH(E)\coloneqq d-\dim_\cH(E),$$
where
$$\dim_\cH(E)\coloneqq \inf \bigl\{ s \in (0,d]: \cH^s(E)=0 \}$$
is the \emph{Hausdorff dimension} of $E \subseteq \R^d$. The measure $\cH^s$ was defined in Section~\ref{Sec: Geometry}.

\begin{proposition}[A Version of Netrusov's Theorem]
\label{Prop: Netrusov}
Let $0<s<\frac{d}{2}$ and let $E \subseteq \R^d$ be closed. If $2s < \codim_\cH(E)$, then $\C^\infty_E$ is dense in $\W^{s,2}$.
\end{proposition}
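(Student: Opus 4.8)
The plan is to derive this as a specialisation of Netrusov's density theorem \cite[Thm.~10.1.1]{Adams-Hedberg}, which asserts density of $\C^\infty_E$ in the Bessel potential space $\H^{s,2} = \W^{s,2}$ under a capacitary smallness condition on $E$. Concretely, Netrusov's theorem gives density provided the relevant Bessel capacity $C_{s,2}$ of $E$ vanishes in the sense required there (this is precisely the condition that $E$ is \emph{thin} at $\H^{s,2}$-quasievery point, which holds trivially when $C_{s,2}(K) = 0$ for every compact $K \subseteq E$). So the task reduces to translating the hypothesis $2s < \codim_\cH(E)$ into this capacitary statement, and for that the comparison between Bessel capacities and Hausdorff measures in \cite[Thm.~5.1.9]{Adams-Hedberg} is the right tool: it states (one direction suffices here) that if $\cH^{d - 2s}(E) = 0$ then $C_{s,2}(E) = 0$, and more generally controls $C_{s,2}$ from above by $\cH^{d-2s+\eta}$-content for small $\eta > 0$.

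First I would record that $\H^{s,2} = \W^{s,2}$ as stated in the excerpt (citing \cite[Sec.~2.4.2, Rem.~2]{Triebel}), so that Netrusov's theorem, which is formulated for Bessel potential spaces, applies verbatim to $\W^{s,2}$; the range $0 < s < \tfrac d2$ is exactly what keeps $d - 2s$ positive so that the relevant Hausdorff measure is a genuine $(d-2s)$-dimensional one and the capacity $C_{s,2}$ is nontrivial. Second, I would unwind the definition of $\codim_\cH(E)$: the hypothesis $2s < \codim_\cH(E) = d - \dim_\cH(E)$ rearranges to $\dim_\cH(E) < d - 2s$. By the definition of Hausdorff dimension as the infimal exponent with vanishing Hausdorff measure, this forces $\cH^{d-2s}(E) = 0$ — indeed $d - 2s$ strictly exceeds $\dim_\cH(E)$, so $\cH^{d-2s}$ already vanishes on $E$ (even $\cH^{t}(E) = 0$ for every $t > \dim_\cH(E)$, and here $d-2s$ is such a $t$). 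Third, feeding $\cH^{d-2s}(E) = 0$ into \cite[Thm.~5.1.9]{Adams-Hedberg} yields $C_{s,2}(K) = 0$ for every compact $K \subseteq E$, hence the capacitary hypothesis of Netrusov's theorem is met; applying \cite[Thm.~10.1.1]{Adams-Hedberg} then gives that $\C^\infty_E$ is dense in $\W^{s,2}$.

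The main point requiring care is the precise matching of hypotheses between the two cited theorems in \cite{Adams-Hedberg}: Netrusov's theorem is stated in terms of a capacity condition ("$E$ is $(s,2)$-negligible", or that $E$ carries no nontrivial measure of finite $\H^{-s,2}$-norm, depending on the formulation), and one must check that $C_{s,2}(E \cap K) = 0$ for all compact $K$ is exactly (or implies) that condition. This is where the phrase "follows as a special case" in the statement does the work, and I expect the only genuine obstacle is bookkeeping — confirming that the normalisation of capacity in \cite[Ch.~5]{Adams-Hedberg} is the one used in \cite[Ch.~10]{Adams-Hedberg} and that the inequality in \cite[Thm.~5.1.9]{Adams-Hedberg} goes in the direction we need (Hausdorff measure zero $\Rightarrow$ capacity zero, which is the "easy" implication). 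Once that alignment is in place, the argument is the three-line chain $2s < \codim_\cH(E) \Rightarrow \dim_\cH E < d - 2s \Rightarrow \cH^{d-2s}(E) = 0 \Rightarrow C_{s,2}(E) = 0 \Rightarrow \C^\infty_E \text{ dense in } \W^{s,2}$.
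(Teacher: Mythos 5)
Your proposal is correct and is essentially the paper's own argument, which is stated in a single sentence just before the proposition: specialize Netrusov's spectral synthesis theorem \cite[Thm.~10.1.1]{Adams-Hedberg} and convert its capacitary hypothesis into a Hausdorff--dimension one via \cite[Thm.~5.1.9]{Adams-Hedberg}. Your chain $2s < \codim_\cH(E) \Rightarrow \dim_\cH(E) < d-2s \Rightarrow \cH^{d-2s}(E)=0 \Rightarrow C_{s,2}(E)=0 \Rightarrow \C^\infty_E$ dense in $\W^{s,2}$ is exactly how the reduction goes.
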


In order to show that this version of Netrusov's Theorem is applicable in our setting, we need an elementary covering lemma for porous sets \cite[Lemma A.8]{BE}.

\begin{lemma}
	\label{Lem: porous covering property}
	If $E\subseteq \R^d$ is porous, then there exists $C\geq 1$ and $0<t<d$ such that, given $x\in E$ and $0<r<R\leq 1$, there is a covering of $E\cap B(x,R)$ by $C(R/r)^t$ balls of radius $r$ centered in $E$.
\end{lemma}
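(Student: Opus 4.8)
I would recognise this as the classical fact that a porous set has finite upper box-counting dimension strictly below $d$ — with all constants controlled by $\kappa$ and $d$ — repackaged as a covering estimate, and prove it in three steps.

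\textbf{Step 1: reduce to a dyadic count.} Fix the standard dyadic mesh of $\R^d$. It suffices to produce $C_0 = C_0(\kappa,d) \geq 1$ and $c_0 = c_0(\kappa,d) \in (0,d)$ so that for every dyadic cube $Q_0$ with $\len(Q_0) \leq 1$ and every integer $j \geq 0$, the number of dyadic descendants of $Q_0$ of side $2^{-j}\len(Q_0)$ that meet $E$ is at most $C_0\,2^{j(d-c_0)}$. Granting this, for $x \in E$ and $0 < r < R \leq 1$ I cover $\B(x,R)$ by a dimensionally bounded family of dyadic cubes $Q_0$ with $\len(Q_0) \in (R/2,R]$, apply the count with $j$ the least integer for which the side-$2^{-j}\len(Q_0)$ descendants have diameter $\leq r$, choose a point of $E$ inside each such descendant that meets $E$, and keep the radius-$r$ balls about these points: they are centred in $E$, number $\lesssim_{\kappa,d}(R/r)^{d-c_0}$, and cover $E \cap \B(x,R)$. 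So $t := d - c_0$ and some $C = C(\kappa,d)$ work.

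\textbf{Step 2: one scale of porosity.} For $x \in E$ and $0 < \rho \leq 1$, porosity gives $z \in \B(x,\rho)$ with $\B(z,\kappa\rho) \cap E = \emptyset$; with $E_s \coloneqq \{w : \dist(w,E) < s\}$, the ball $\B(z,\kappa\rho/2)$ misses $E_{\kappa\rho/2}$ and lies in $\B(x,(1+\tfrac{\kappa}{2})\rho)$, so for every $s \leq \kappa\rho/2$,
\begin{align*}
\big|E_s \cap \B\big(x,(1+\tfrac{\kappa}{2})\rho\big)\big| \leq \big(1 - (\tfrac{\kappa}{4})^d\big)\,\big|\B\big(x,(1+\tfrac{\kappa}{2})\rho\big)\big| ;
\end{align*}
i.e.\ a proportion bounded away from $1$ of any comparably sized ball centred on $E$ already lies at distance $\gtrsim \kappa\rho$ from $E$.

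\textbf{Step 3: iterate.} Iterating Step 2 over $\sim\log_2(R/r)$ dyadic scales should upgrade this into a power bound $|E_s \cap \B(x,L)| \leq C_1 (s/L)^{c_1}|\B(x,L)|$ for $x \in E$, $0 < s \leq L \leq 1$, with $c_1 = c_1(\kappa,d) > 0$. The dyadic count then follows: a maximal $r$-separated subset $S$ of $E \cap \B(x,R)$ gives disjoint balls $\B(y,r/2) \subseteq E_r \cap \B(x,2R)$, so $\#S \lesssim |E_r \cap \B(x,2R)|\,r^{-d} \lesssim_{\kappa,d} (R/r)^{d-c_1}$, and $\{\B(y,r)\}_{y \in S}$ covers $E \cap \B(x,R)$ by maximality.

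The hard part is Step 3. One use of Step 2 cannot beat the naive count: the combinatorial overheads in passing from a ball to a grid of smaller cubes (the $\sqrt d$- and fattening-type factors, exponential in $d$) dominate the volume $\approx \kappa^d$ removed by a single porosity hole. So the iteration must be grouped into blocks of $m = m(\kappa,d)$ consecutive scales, chosen long enough that the compounded depletion $(1-(\kappa/4)^d)^m$ absorbs those overheads — equivalently, so that over one block the number of descendant cubes meeting $E$ is multiplied by a factor strictly below $2^{md}$. A minor further nuisance is that a point of $E$ near a face of a dyadic cube $Q$ may have its $E$-free ball in a neighbouring cube; I would handle this by working with a bounded dyadic neighbourhood of $Q$ and counting multiplicities, costing only one more dimensional constant, harmless once $m$ is large.
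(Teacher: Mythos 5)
The paper does not prove this lemma; it is quoted verbatim from \cite[Lem.~A.8]{BE}, so there is no in-paper proof against which to compare your attempt, and it must be judged on its own merits.

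Your reduction in Step~1 and the one-scale depletion estimate in Step~2 are correct, and you have correctly recognised that the lemma is the classical statement that a $\kappa$-porous set has upper box-counting dimension at most $d-c(\kappa,d)$ for some $c>0$. You have also correctly diagnosed the central pitfall: the naive iteration does not close. Indeed, a single application of porosity to an $r$-separated set $S$ in $E\cap\B(x,R)$ gives only $\#S \le \bigl(3^d - \kappa^d\bigr)(R/r)^d$, and submultiplicativity of the packing number $P(R,r)$ then yields $P(R,R/M^k)\le \bigl((3^d-\kappa^d)M^d\bigr)^k$, which is \emph{worse} than the trivial bound $M^{kd}$ because $3^d-\kappa^d>1$. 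So one really must work at all scales simultaneously, which is exactly what your Step~3 proposes.

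The genuine gap is that Step~3 --- the heart of the lemma --- is never carried out. You write that iterating Step~2 ``should upgrade'' the one-scale estimate into a power bound, and that you ``would handle'' the spill-over of $E$-free balls into neighbouring dyadic cubes via a bounded dyadic neighbourhood with bounded multiplicity; these are declarations of intent, not arguments. Two specific points remain unverified. First, the blocking claim: you need an integer $m=m(\kappa,d)$ and a $\beta<1$ such that, for any dyadic cube $Q$ of side $\le 1$ meeting $E$, at most $\beta\,2^{md}$ of its generation-$m$ descendants meet $E$ --- but since the $E$-free hole produced by porosity at a point of $E\cap Q$ may lie in a neighbour of $Q$, the obvious per-cube bound is only the trivial $2^{md}$, and the improvement must come from summing over all cubes at a fixed level with a multiplicity estimate. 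Second, the multiplicity estimate itself: you must check that a given small cube can be ``consumed'' as part of the hole of only boundedly many cubes $Q$ at the coarser level, and then verify that the resulting depletion factor, after paying the $\sqrt{d}$- and fattening overheads once per block of $m$ scales, is strictly less than $2^{md}$ for $m$ large enough. Both points are true and the proof can be completed along exactly these lines, but as written the proposal leaves the decisive computation to the reader.
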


\begin{proposition}
\label{Prop: porous implies density of test functions and zero extension}
	There exists $0<s_0\leq \frac{1}{2}$ such that $\C^\infty_{\bd \bO}(\bO)$ is dense in $\W^{s,2}(\bO)$ and the zero extension operator $\Ext_0: \W^{s,2}(\bO) \to \W^{s,2}$ is bounded for $0<s<s_0$.
\end{proposition}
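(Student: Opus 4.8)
The statement has two parts: density of $\C^\infty_{\bd\bO}(\bO)$ in $\W^{s,2}(\bO)$ and boundedness of the zero extension $\Ext_0\colon \W^{s,2}(\bO)\to\W^{s,2}$, both for small $s>0$. The natural strategy is to transfer both properties from the whole-space statement in Proposition~\ref{Prop: Netrusov} along the restriction map $u\mapsto u|_{\bO}$, using porosity of $\bd\bO$ (Corollary~\ref{Cor: Boundary porous}) together with Lemma~\ref{Lem: porous covering property} to check the dimension hypothesis. First I would record that Lemma~\ref{Lem: porous covering property}, applied to $E=\bd\bO$, forces $\dim_\cH(\bd\bO)\le t<d$, hence $\codim_\cH(\bd\bO)=d-\dim_\cH(\bd\bO)\ge d-t>0$. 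Therefore there is some threshold $s_0\coloneqq\min\{\tfrac12,\tfrac12(d-t),\tfrac{d}{4}\}>0$ such that for all $s\in(0,s_0)$ we have $0<s<\tfrac d2$ and $2s<\codim_\cH(\bd\bO)$, so Proposition~\ref{Prop: Netrusov} applies with $E=\bd\bO$: $\C^\infty_{\bd\bO}$ is dense in $\W^{s,2}$.

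\textbf{Density on $\bO$.} Given $u\in\W^{s,2}(\bO)$, pick $\tilde u\in\W^{s,2}$ with $\tilde u|_{\bO}=u$. By the density just established, choose $\varphi_k\in\C^\infty_{\bd\bO}$ with $\varphi_k\to\tilde u$ in $\W^{s,2}$. Each restriction $\varphi_k|_{\bO}$ lies in $\C^\infty_{\bd\bO}(\bO)$ by Definition~\ref{Def: CDinfty} (a test function vanishing near $\bd\bO$, restricted to $\bO$), and since the restriction map is a contraction from $\W^{s,2}$ onto $\W^{s,2}(\bO)$ for its quotient norm, $\varphi_k|_{\bO}\to u$ in $\W^{s,2}(\bO)$. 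This gives density for every $s\in(0,s_0)$.

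\textbf{Boundedness of $\Ext_0$.} For $u\in\W^{s,2}(\bO)$ with a test-function representative $\varphi|_{\bO}$, $\varphi\in\C^\infty_{\bd\bO}$, the zero extension of $\varphi|_{\bO}$ to $\R^d$ is exactly $\varphi\cdot\ind_{\bO}$; but $\varphi$ vanishes on a neighbourhood of $\bd\bO$, so $\varphi\cdot\ind_{\bO}$ agrees with $\varphi\cdot\ind_{\cl\bO}=\varphi$ outside that neighbourhood and with $0$ inside a possibly smaller neighbourhood of $\bd\bO\cap\supp\varphi$; in fact $\varphi\cdot\ind_{\bO}=\varphi$ identically since $\supp\varphi$ is at positive distance from $\bd\bO=\R^d\setminus(\bO\cup(\R^d\setminus\cl\bO))$ — more carefully, on $\supp\varphi$ we have $\supp\varphi\cap\bd\bO=\emptyset$, and $\supp\varphi$ meets only one of the two open sets $\bO$, $\R^d\setminus\cl\bO$ on each connected piece, but since $\varphi|_{\R^d\setminus\cl\bO}$ is irrelevant to $\Ext_0(\varphi|_{\bO})$, the clean way is: choose the representative $\varphi$ so that additionally $\supp\varphi\subseteq\cl\bO$, which is possible because $\C^\infty_{\bd\bO}$-functions can be modified to vanish on $\R^d\setminus\cl\bO$ without changing them on $\bO$ and without leaving $\C^\infty_{\bd\bO}$ (multiply by a cutoff that is $1$ on a neighbourhood of $\cl\bO\setminus(\text{nbhd of }\bd\bO)$). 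Then $\Ext_0(\varphi|_{\bO})=\varphi$, so
\begin{align*}
\|\Ext_0(\varphi|_{\bO})\|_{\W^{s,2}}=\|\varphi\|_{\W^{s,2}},
\end{align*}
and taking the infimum over all such representatives $\varphi$ shows $\|\Ext_0(\varphi|_{\bO})\|_{\W^{s,2}}\le\|\varphi|_{\bO}\|_{\W^{s,2}(\bO)}$ — the point being that on the dense subspace $\C^\infty_{\bd\bO}(\bO)$ the operator $\Ext_0$ coincides with the bounded section $\varphi|_{\bO}\mapsto\varphi$. Since $\C^\infty_{\bd\bO}(\bO)$ is dense in $\W^{s,2}(\bO)$ by the previous step and, trivially, $\Ext_0$ is $\L^2$-bounded, a standard density-and-closedness argument extends $\Ext_0$ to a bounded operator on all of $\W^{s,2}(\bO)$ with the same range in $\W^{s,2}$.

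\textbf{Main obstacle.} The analytic content is entirely in Proposition~\ref{Prop: Netrusov} and Lemma~\ref{Lem: porous covering property}, both quoted; the only genuine care needed here is the bookkeeping around representatives in the definition of $\Ext_0$ — namely verifying that on test functions the zero extension literally is the chosen smooth extension, so that the quotient-norm estimate is automatic. I expect that to be the one step where one must be precise rather than routine; everything else is a soft transfer argument through the restriction quotient map.
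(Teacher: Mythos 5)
The density argument is essentially the paper's own: apply Lemma~\ref{Lem: porous covering property} to conclude $\dim_{\cH}(\bd\bO)\le t<d$, invoke Proposition~\ref{Prop: Netrusov} on $\R^d$, and restrict to $\bO$. You gloss over the short computation showing $\cH^\ell(\bd\bO)=0$ for $\ell\in(t,d)$, but that is routine, and this half is fine.

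The boundedness of $\Ext_0$ is where the proposal breaks down, and the error is in the direction of the inequality. For a test representative $\varphi\in\C^\infty_{\bd\bO}$ with $\supp\varphi\subseteq\cl\bO$ you correctly observe $\Ext_0(\varphi|_{\bO})=\varphi$, so $\|\Ext_0(\varphi|_{\bO})\|_{\W^{s,2}}=\|\varphi\|_{\W^{s,2}}$. But $\Ext_0(\varphi|_{\bO})$ is a \emph{single fixed function}, and among all such ``compactly supported in $\cl\bO$'' representatives there is only one, namely $\Ext_0(\varphi|_{\bO})$ itself; there is no infimum to take. Meanwhile the quotient norm $\|\varphi|_{\bO}\|_{\W^{s,2}(\bO)}$ is the infimum over \emph{all} $\W^{s,2}$-extensions, not just those supported in $\cl\bO$, so what you actually get is $\|\Ext_0(\varphi|_{\bO})\|_{\W^{s,2}}\ge\|\varphi|_{\bO}\|_{\W^{s,2}(\bO)}$ --- the trivial direction. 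The estimate you need, $\|\Ext_0 u\|_{\W^{s,2}}\lesssim\|u\|_{\W^{s,2}(\bO)}$, says precisely that the zero extension is within a constant of the minimal-norm extension, and that is not automatic: it is false for $s\ge\tfrac12$ on a half-space (where $\W^{s,2}=\W^{s,2}_{0}$ fails), so some geometric input is unavoidable. The ensuing ``density-and-closedness'' step also cannot rescue this, because density only helps once the uniform bound is established on the dense class, which is exactly what is missing.

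The paper does not attempt a self-contained proof of this half; it cites Sickel's result on pointwise multiplication by $\ind_{\bO}$ in Triebel--Lizorkin spaces, adapted to porous boundaries in \cite[Ex.~2.16 \& Prop.~2.17]{BE}. That result says that for $s$ below a threshold determined by the porosity constant, $\ind_{\bO}$ is a bounded pointwise multiplier on $\W^{s,2}$, which is equivalent to the boundedness of $\Ext_0$ you are after. If you want a proof rather than a citation, the route goes through the characterization of $\W^{s,2}$ via first differences and a careful estimate of $\iint_{|x-y|<1}|\ind_{\bO}(x)-\ind_{\bO}(y)|^2 |u(x)|^2\,|x-y|^{-d-2s}\,\d x\,\d y$ using the Ahlfors-type thinness of the boundary strip encoded in the porosity; there is no soft transfer argument that avoids this analysis.
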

\begin{proof}
	Since $\bd \bO$ is porous by Corollary~\ref{Cor: Boundary porous}, we can pick $C\geq 1$ and $0<t<d$ as in Lemma~\ref{Lem: porous covering property}. We take $0<s<1$ such that $2s<d-t$. Then, let $B$ be some ball centered in $\bd \bO$ with $\rad(B)= 1$ and let $0<r<1$. Given $(B_i)_{i}$ a covering of $\bd \bO\cap B$ provided by the lemma and $\ell\in (t,d)$, we estimate
	\begin{align}
			\cH^{\ell}_r(\bd \bO \cap B) \leq \sum_i \rad(B_i)^{\ell} = \#_i \, r^{\ell} \leq C r^{\ell-t}.
	\end{align}
	Taking the limit as $r\to 0$, we arrive at $\cH^{\ell}(\bd \bO \cap B) = 0$. Finally, a countable covering of $\bd \bO$ by such balls yields $\cH^{\ell}(\bd \bO) = 0$, so by definition we have $\dim_\cH(\bd \bO) \leq t$ and therefore $2s<\codim_\cH(\bd \bO)$. Now, Netrusov's Theorem gives density of $\C^\infty_{\bd \bO}$ in $\W^{s,2}$ and the first claim follows by restriction to $\bO$.
	
	Boundedness of the zero extension operator for $s$ sufficiently small is a result due to Sickel~\cite{Sickel}, which is put into the context of sets with porous boundary in \cite[Ex.~2.16 \& Prop.~2.17]{BE}. Inspecting the proof of~\cite[Prop.~A.10]{BE} reveals that the same range of $s$ as before would work but we do not need such precision.
\end{proof}

\subsection{Interpolation scales}
\label{Sec: Interpolation scales}

We require standard notions from interpolation theory of Hilbert spaces and refer to the monograph~\cite{Triebel} for background. We denote the $(\theta,2)$-real and $\theta$-complex interpolation brackets by $(\,\cdot\,,\cdot\,)_{\theta,2}$ and $[\,\cdot\,,\cdot\,]_\theta$, respectively.

\begin{definition}
\label{Def: Interpolation scale}
Let $I \subseteq \R$ be an interval and for each $i \in I$ let $H_i$ be a Hilbert space. Call $(H_i)_i$ a \emph{complex interpolation scale} if whenever $i_0 < i_1$, then $H_{i_1} \subseteq H_{i_0}$ with dense and continuous inclusion and
\begin{align}
\label{eq: Interpolation scale}
[H_{i_0}, H_{i_1}]_\theta = H_{(1-\theta)i_0 + \theta i_1} 
\end{align}
for all $\theta \in (0,1)$ up to equivalent norms.
\end{definition}

We could have introduced a similar notion for $(\theta,2)$-real interpolation, but it is also a good opportunity to note that this coincides with $\theta$-complex interpolation when working with Hilbert spaces \cite[Cor. C.4.2.]{Hytonen-Vol-I}. We will freely use this fact. If $H_0, H_1$ are Hilbert spaces with dense inclusion $H_1 \subseteq H_0$, then $([H_0,H_1]_\theta)_{\theta \in [0,1]}$ is a complex interpolation scale, see \cite[Sec.~1.9.3.~Thm.~(c) \& (d)]{Triebel} and \cite[Sec.~1.10.3.~Thm.~2]{Triebel}. In this context \eqref{eq: Interpolation scale} is called \emph{reiteration} in the literature and we use the convention $[H_0, H_1]_j = H_j$ for $j = 0,1$.

For the proof of Theorem~\ref{Thm: Laplace} we need the following interpolation scales:
\begin{alignat}{2}
&\mathrm{(a)} \quad (\W^{s,2})_{s \in [0, \frac{3}{2})}, \hspace{50pt}
&&\mathrm{(b)} \quad (\W^{s,2}(\bO))_{s \in (0, \frac{3}{2})}, \\
&\mathrm{(c)} \quad (\W_{\bD}^{s,2}(\bO))_{s \in (\frac{1}{2}, \frac{3}{2})}, \hspace{50pt}
&& \mathrm{(d)} \quad (\W_{\bD}^{s,2}(\bO))_{s \in (-\frac{3}{2}, -\frac{1}{2})}, \\
&\mathrm{(e)} \quad (\dom((1-\Delta_{\bD})^{\frac{s}{2}}))_{s \in [0,\infty)}.
\end{alignat}
Part (a) is classical~\cite[Sec.~2.4.4.~Thm.~1]{Triebel} and since $1-\Delta_{\bD}$ is self-adjoint, so is (e), see~\cite[Thm.~6.6.9 \& Cor.~7.1.6]{Haase}. Also, (d) follows from (c) by duality~\cite[Sec.~1.11.2]{Triebel}. Parts (b) and (c) use the standing geometric assumptions and have been obtained in \cite[Prop.~3.8]{BE} and \cite[Thm.~1.2]{BE}, respectively. We need a minor but yet important modification of (b) that uses again porosity of $\bd \bO$.

\begin{lemma}
\label{Lem: Die ganze Scheisse}
In the complex interpolation scale (b) one can add $s=0$.
\end{lemma}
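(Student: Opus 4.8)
The statement to prove is Lemma~\ref{Lem: Die ganze Scheisse}: that the complex interpolation scale $(\W^{s,2}(\bO))_{s\in(0,3/2)}$ extends to include the endpoint $s=0$, i.e.\ that $[\L^2(\bO), \W^{s_1,2}(\bO)]_\theta = \W^{\theta s_1,2}(\bO)$ holds (with equivalent norms) for all $s_1 \in (0,\tfrac32)$ and $\theta\in(0,1)$, together with the required dense continuous inclusions. The plan is to reduce this to the corresponding statement on $\R^d$ (scale (a), which is classical and includes $s=0$) by a \emph{retraction--coretraction} argument built on the zero-extension and restriction operators. The key point is that by Proposition~\ref{Prop: porous implies density of test functions and zero extension}, since $\bd\bO$ is porous, there is some $s_0\in(0,\tfrac12]$ for which the zero extension $\Ext_0\colon \W^{s,2}(\bO)\to\W^{s,2}$ is bounded for all $0<s<s_0$, and of course $\Ext_0$ is an isometry on $\L^2$. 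Paired with the (always bounded) restriction $|_{\bO}\colon\W^{s,2}\to\W^{s,2}(\bO)$, which satisfies $(\cdot)|_{\bO}\circ\Ext_0=\Id$, this exhibits $\W^{s,2}(\bO)$ as a complemented subspace of $\W^{s,2}$ \emph{compatibly in $s$} on the range $[0,s_0)$.

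First I would record the elementary facts: the inclusion $\W^{s_1,2}(\bO)\subseteq\L^2(\bO)$ is continuous for $s_1\in(0,\tfrac32)$, and it is dense because $\C^\infty_0(\R^d)|_{\bO}$ is dense in $\L^2(\bO)$ and, being contained in every $\W^{s_1,2}(\bO)$, density propagates. Next, using that $(\Ext_0, \cdot|_{\bO})$ is a coretraction--retraction pair simultaneously at the endpoints $0$ and at $s_1$ for $s_1 < s_0$, the standard interpolation-of-complemented-subspaces lemma (see \cite[Sec.~1.2.4]{Triebel}) gives $[\L^2(\bO),\W^{s_1,2}(\bO)]_\theta = \big([\L^2,\W^{s_1,2}]_\theta\big)\big|_{\bO} = \W^{\theta s_1,2}(\bO)$ for $0<s_1<s_0$ and all $\theta\in(0,1)$, where the middle equality is scale (a). This proves the reiteration identity \eqref{eq: Interpolation scale} whenever the larger index is below $s_0$.

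The remaining work is to bootstrap from the small range $(0,s_0)$ to the full range $(0,\tfrac32)$. Here I would use that scale (b) is \emph{already known} to be a complex interpolation scale on $(0,\tfrac32)$. Fix $s_1\in(0,\tfrac32)$ and $\theta\in(0,1)$; set $s=\theta s_1$. Choose $s_2$ with $0 < s_2 < \min(s_0, s)$ (possible since $s>0$). By reiteration \emph{within} scale (b) there is $\eta\in(0,1)$ with $[\W^{s_2,2}(\bO),\W^{s_1,2}(\bO)]_\eta = \W^{s,2}(\bO)$. Separately, the result of the previous paragraph gives $[\L^2(\bO),\W^{s_2,2}(\bO)]_{\mu}=\W^{\mu s_2,2}(\bO)$ for all $\mu\in(0,1)$, so in particular $\L^2(\bO)$, $\W^{s_2,2}(\bO)$ and $\W^{s_1,2}(\bO)$ all sit inside a single scale in which reiteration is valid at the relevant parameter values; chasing the indices (a routine reiteration computation combining the bracket $[\L^2(\bO),\W^{s_2,2}(\bO)]$ with $[\W^{s_2,2}(\bO),\W^{s_1,2}(\bO)]$ via \cite[Sec.~1.10.3.~Thm.~2]{Triebel}) identifies $[\L^2(\bO),\W^{s_1,2}(\bO)]_\theta$ with $\W^{s,2}(\bO)=\W^{\theta s_1,2}(\bO)$.

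\textbf{Main obstacle.} The delicate point is not the reiteration bookkeeping but making sure the zero-extension operator is available as an interpolation coretraction on a genuinely open subinterval containing $0$; this is exactly what Proposition~\ref{Prop: porous implies density of test functions and zero extension} supplies, and it is where porosity of $\bd\bO$ (Corollary~\ref{Cor: Boundary porous}) enters. One should also be slightly careful that the compatible-couple structure is the right one — all spaces in sight embed continuously and densely into $\L^2(\bO)$, so the ambient Hausdorff topological vector space for the interpolation couples is consistently $\L^2(\bO)$ and no compatibility issue arises. Everything else is an application of standard Hilbert-space interpolation (coincidence of real and complex methods, retraction theorem, reiteration).
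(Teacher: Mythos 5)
Your first step --- exhibiting $\W^{s,2}(\bO)$ as a complemented subspace of $\W^{s,2}$ via the retraction--coretraction pair $(\Ext_0,\ |_{\bO})$ from Proposition~\ref{Prop: porous implies density of test functions and zero extension}, then transporting the classical scale (a) to conclude that $(\W^{s,2}(\bO))_{s\in[0,s_0)}$ is a complex interpolation scale --- coincides exactly with the paper's first step.

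The second step, however, has a genuine gap. You assert that $[\L^2(\bO),\W^{s_1,2}(\bO)]_\theta = \W^{\theta s_1,2}(\bO)$ follows by ``a routine reiteration computation'' combining $[\L^2(\bO),\W^{s_2,2}(\bO)]$ with $[\W^{s_2,2}(\bO),\W^{s_1,2}(\bO)]$ via \cite[Sec.~1.10.3.~Thm.~2]{Triebel}. But the classical reiteration theorem runs in the opposite direction: from knowing $[A_0,A_1]_\cdot$ one derives $[A_{\theta_0},A_{\theta_1}]_\cdot$ for the intermediate spaces $A_{\theta_j}\coloneqq[A_0,A_1]_{\theta_j}$. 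It does \emph{not} let you reconstruct $[A_0,A_1]_\cdot$ from knowledge of the two inner brackets $[A_0,B]_\cdot$ and $[B,A_1]_\cdot$. Your phrase that ``$\L^2(\bO)$, $\W^{s_2,2}(\bO)$ and $\W^{s_1,2}(\bO)$ all sit inside a single scale in which reiteration is valid'' is precisely the conclusion of the lemma and cannot be assumed: before the lemma is proved, you only have two \emph{separate} overlapping scales, on $[0,s_0)$ and on $(0,\tfrac32)$. Passing from knowledge of the two inner brackets to knowledge of the outer bracket is exactly the non-trivial content of Wolff's theorem (in the complex-method form of \cite[Thm.~1]{Wolff-refined}). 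The paper invokes Wolff's theorem explicitly at this point; your proof needs it too, and without it the bootstrapping step does not go through.
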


\begin{proof}
This follows from two abstract interpolation principles. First, with $s_0$ as in Proposition~\ref{Prop: porous implies density of test functions and zero extension}, $|_{\bO}$ the pointwise restriction to $\bO$ and $\Ext_0$ the zero extension operator, we get that
\begin{align}
\Ext_0: \W^{s,2}(\bO) \to \W^{s,2}, \qquad |_{\bO}: \W^{s,2} \to \W^{s,2}(\bO) \qquad \mathrlap{(0 \leq s < s_0)}
\end{align}
are bounded operators with $|_{\bO} \,\circ\, \Ext_0$ the identity on $\W^{0,2}(\bO)$. By \cite[Sec.~1.2.4]{Triebel} the property of being a complex interpolation scale inherits from  $(\W^{s,2})_{s \in [0,s_0)}$ to $(\W^{s,2}(\bO))_{s \in [0,s_0)}$. Second, the latter scale overlaps with the one in (b) and Wolff's theorem~\cite[Thm.~1]{Wolff-refined} states that they can be ``glued together'' to an interpolation scale on $[0,\frac{3}{2})$.
\end{proof}

\subsection{Mapping properties for $1-\Delta_{\textbf{\textit{D}}}$}
\label{Sec: Mapping properties for Laplacian}

We start with mapping properties for the distributional gradient on spaces of fractional smoothness.

\begin{lemma}
\label{Lem: mapping property gradient}
Let $s, t > 0$ satisfy $t< \frac{1}{2}$ and $s<s_0$, where $s_0$ was determined in Proposition~\ref{Prop: porous implies density of test functions and zero extension}. Then $\nabla$ is a bounded operator $\W^{1+t,2}(\bO)\to \W^{t,2}(\bO)^d$ and $\W^{1-s,2}(\bO)\to \W^{-s,2}(\bO)^d$.
\end{lemma}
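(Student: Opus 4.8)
The plan is to deduce the mapping properties for $\nabla$ on spaces over $\bO$ from the corresponding (trivial) mapping properties on the whole space, using the interpolation scales recorded in Section~\ref{Sec: Interpolation scales}. The guiding principle is that $\nabla \colon \W^{1+r,2} \to \W^{r,2}$ is bounded on $\R^d$ for every $r \geq 0$ (immediate from the definition of the norms for $r \in (0,\tfrac12)$, from $\W^{1,2}$ being the standard Sobolev space for $r=0$, and by the Bessel potential description for all $r$), and that the spaces on $\bO$ are defined by restriction, so $|_{\bO} \circ \nabla = \nabla \circ (\cdot)|_{\bO}$ as distributions.

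For the first assertion, let $t \in (0,\tfrac12)$. I would take $u \in \W^{1+t,2}(\bO)$, pick an extension $U \in \W^{1+t,2}$ with $U|_{\bO} = u$ and near-minimal norm, and observe that $\nabla U \in \W^{t,2}$ with $\|\nabla U\|_{\W^{t,2}} \lesssim \|U\|_{\W^{1+t,2}}$; restricting to $\bO$ and taking the infimum over extensions gives $\|\nabla u\|_{\W^{t,2}(\bO)^d} \lesssim \|u\|_{\W^{1+t,2}(\bO)}$. This argument is completely elementary and uses nothing but the definition of the quotient norm.

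For the second assertion, the point is that $\W^{-s,2}(\bO)$ is defined as an anti-dual, so I cannot argue so naively; instead I would use interpolation. The endpoints to interpolate between are $\nabla \colon \W^{1,2}(\bO) \to \W^{0,2}(\bO)^d = \L^2(\bO)^d$ and $\nabla \colon \W^{1-s_0',2}(\bO) \to \W^{-s_0',2}(\bO)^d$ for some small $s_0' < s_0$ — but this is circular as stated, so the cleaner route is: the scale $(\W^{s,2}(\bO))_{s \in [0,\frac32)}$ is a complex interpolation scale (part (b) together with Lemma~\ref{Lem: Die ganze Scheisse}), hence so is its shift, and by duality the negative scale $(\W^{-s,2}(\bO))_{s \in (-\frac32,0]}$ is a complex interpolation scale as well (using reflexivity of Hilbert spaces and \cite[Sec.~1.11.2]{Triebel}). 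Now $\nabla$ acts boundedly $\W^{1,2}(\bO) \to \L^2(\bO)^d$; to get boundedness into a space of negative smoothness I would instead read the statement as $\nabla \colon \L^2(\bO) \to \W^{-1,2}(\bO)^d$ — which holds because $\nabla$ is the adjoint (up to sign) of $\div$ acting $\W^{1,2}(\bO)^d \to \L^2(\bO)$, equivalently by transposing the first assertion at smoothness $0$ — and interpolate the pair $\nabla\colon \L^2(\bO)\to \W^{-1,2}(\bO)^d$ and $\nabla\colon \W^{1,2}(\bO)\to \L^2(\bO)^d$ along the scales $(\W^{s,2}(\bO))_s$ and $(\W^{s-1,2}(\bO))_s$. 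Choosing the interpolation parameter $\theta = s \in (0, s_0)$ (in fact any $\theta \in (0,1)$ would work for this particular pair, but the restriction $s < s_0$ is what Lemma~\ref{Lem: Die ganze Scheisse} guarantees for the scale to reach down to $s=0$, and it is the range we will use later) yields $\nabla \colon \W^{1-s,2}(\bO) \to \W^{-s,2}(\bO)^d$ bounded.

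The main obstacle is purely bookkeeping: making sure the duality pairing used to define $\W^{-s,2}(\bO)$ is compatible with the distributional action of $\nabla$, i.e.\ that the transposed operator really is (minus) the distributional gradient and not some other extension, and checking that the constants are uniform in $s$ over the relevant range — but since all spaces in sight are Hilbert spaces and all identifications extend the $\L^2(\bO)$ inner product, this is routine. No harmonic analysis is needed here; the lemma is a soft consequence of the interpolation scales established earlier.
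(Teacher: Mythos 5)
For the first assertion (positive smoothness $t$) your argument is identical to the paper's and is fine. For the second assertion there is a genuine gap in the proposed interpolation argument, and the paper's proof takes a different, more hands-on route precisely to avoid the issue.

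The gap: you propose to interpolate the two endpoint operators $\nabla\colon \L^2(\bO)\to\W^{-1,2}(\bO)^d$ (defined as the anti-adjoint of $-\div\colon\W^{1,2}(\bO)^d\to\L^2(\bO)$) and $\nabla\colon\W^{1,2}(\bO)\to\L^2(\bO)^d$ (the usual distributional gradient). For the complex interpolation theorem you need these to be restrictions of one and the same linear map on the larger endpoint space $\L^2(\bO)$. They are not. If $A'\colon\L^2(\bO)\to\W^{-1,2}(\bO)^d$ denotes the anti-adjoint of $-\div$, then for $u\in\W^{1,2}(\bO)$ and $v\in\W^{1,2}(\bO)^d$ one has $\langle A'u,v\rangle=-(u\SP\div v)_{\L^2(\bO)}$, whereas the image of $\nabla u\in\L^2(\bO)^d$ under the canonical embedding $\L^2(\bO)^d\hookrightarrow\W^{-1,2}(\bO)^d$ acts by $v\mapsto(\nabla u\SP v)_{\L^2(\bO)}$. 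The two differ by an integration-by-parts boundary term that does not vanish because $\C^\infty_{\bd\bO}(\bO)^d$ is \emph{not} dense in $\W^{1,2}(\bO)^d$ (Proposition~\ref{Prop: porous implies density of test functions and zero extension} only gives density below $s_0\le\frac12$). A one-dimensional sanity check already shows the mismatch: with $\bO=(0,1)$, $u\equiv1$ and $v(x)=x$ one gets $(\nabla u\SP v)=0$ but $-(u\SP\div v)=-1$. Consequently $A'$ does \emph{not} map $\W^{1,2}(\bO)$ into $\L^2(\bO)^d$, so there is no single operator to interpolate, and the step you dismissed as ``bookkeeping'' is where the proof breaks down.

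The paper instead argues directly: fix $u\in\W^{1-s,2}(\bO)$ with extension $Eu\in\W^{1-s,2}$ and, for a test function $\varphi\in\C^\infty_{\bd\bO}(\bO)$, rewrite
\begin{align}
-\langle\partial_i u,\varphi\rangle=(Eu\SP\partial_i\Ext_0\varphi)_{\L^2}
=\bigl((1-\Delta)^{\frac{1-s}{2}}Eu\,\big|\,\partial_i(1-\Delta)^{-\frac12}(1-\Delta)^{\frac{s}{2}}\Ext_0\varphi\bigr)_{\L^2}.
\end{align}
Boundedness of the Riesz transform, the identification $\H^{s,2}=\W^{s,2}$ on $\R^d$, and — crucially — boundedness of $\Ext_0\colon\W^{s,2}(\bO)\to\W^{s,2}$ for $s<s_0$ yield $|\langle\partial_i u,\varphi\rangle|\lesssim\|u\|_{\W^{1-s,2}(\bO)}\|\varphi\|_{\W^{s,2}(\bO)}$, and density of $\C^\infty_{\bd\bO}(\bO)$ in $\W^{s,2}(\bO)$ (Proposition~\ref{Prop: porous implies density of test functions and zero extension}) finishes the proof. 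The restriction $s<s_0$ enters here for two reasons your sketch does not capture: it is what makes $\Ext_0$ bounded and what makes $\W^{-s,2}(\bO)$ a genuine space of distributions on $\bO$ (test functions dense in the predual), so no ambiguity about ``which extension'' of the distributional gradient can arise. If you want to make an interpolation argument work, you would need both endpoint operators to act as the distributional gradient in a compatible way, which essentially forces you to first prove an estimate of the kind the paper proves directly.
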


\begin{proof}
For the first claim we simply note that $\nabla: \W^{1+t,2} \to (\W^{t,2})^d$ is bounded and that $\nabla Eu \in (\W^{t,2})^d$ is an extension of $\nabla u$ if $Eu \in \W^{1+t,2}$ is an extension of $u \in \W^{1+t,2}(\bO)$. 

For the second claim let $u\in \W^{1-s,2}(\bO)$ and let $Eu\in \W^{1-s,2}$ be some extension of $u$. Let $i=1,\ldots,d$. Given $\varphi\in \C^\infty_{\bd \bO}(\bO)$, we first rewrite the duality pairing as
\begin{align}
 -\langle \partial_i  u, \varphi \rangle
 &=(u\,|\, \partial_i \varphi)_{\L^2(\bO)} \\
 &= (Eu\,|\,\Ext_0 \partial_i\varphi)_{\L^2} \\
 &= (Eu\,|\,\partial_i\Ext_0\varphi)_{\L^2}\\
 &= \big((1-\Delta)^{\frac{1-s}{2}} Eu \,\big| \, \partial_i (1-\Delta)^{-\frac{1}{2}} (1-\Delta)^{\frac{s}{2}} \Ext_0 \varphi \big)_{\L^2},
\end{align}
where we used again the fractional powers of the Laplacian on $\R^d$ and commuted the respective Fourier multiplication operators. Since the Riesz transform $\partial_i (1-\Delta)^{-\frac{1}{2}}$ is bounded on $\L^2$ by Plancherel's theorem and the Bessel spaces in \eqref{eq: Bessel spaces} coincide with the fractional Sobolev spaces, we obtain
\begin{align}
 |\langle \partial_i  u, \varphi \rangle| &\lesssim \|Eu\|_{\W^{1-s}} \|\Ext_0\varphi\|_{\W^{s,2}} \\
 &\lesssim \|Eu\|_{\W^{1-s}} \|\varphi\|_{\W^{s,2}(\bO)},
\end{align}
where the final step is due to Proposition~\ref{Prop: porous implies density of test functions and zero extension}. By passing to the infimum over all extensions $Eu$ we arrive at $|\langle \partial_i u, \varphi \rangle| \lesssim \|u\|_{\W^{1-s,2}(\bO)} \|\varphi\|_{\W^{s,2}(\bO)}$. But since $\C^\infty_{\bd \bO}(\bO)$ is dense in $\W^{s,2}(\bO)$ by Proposition~\ref{Prop: porous implies density of test functions and zero extension}, this shows $\partial_i u \in \W^{-s,2}(\bO)$ with $\|\partial_i u\|_{\W^{-s,2}(\bO)} \lesssim \|u\|_{\W^{1-s,2}(\bO)}$.
\end{proof}

\begin{proposition}
\label{Prop: mapping properties Laplacian}
Let $0<s<s_0$ with $s_0$ as in Proposition~\ref{Prop: porous implies density of test functions and zero extension}. Then $1-\Delta_{\bD}: \W^{1,2}_{\bD}(\bO) \to \W^{-1,2}_{\bD}(\bO)$ restricts/extends to a bounded operator $\W^{1\pm s,2}_{\bD}(\bO) \to \W^{-1\pm s,2}_{\bD}(\bO)$.
\end{proposition}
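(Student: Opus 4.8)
The statement asks that the operator $1-\Delta_{\bD}\colon \W^{1,2}_{\bD}(\bO)\to\W^{-1,2}_{\bD}(\bO)$, which exists by Lax--Milgram applied to the Dirichlet form $a(u,v)=\int_{\bO} u\bar v + \nabla u\cdot\overline{\nabla v}$, restricts to a bounded map $\W^{1+s,2}_{\bD}(\bO)\to\W^{-1+s,2}_{\bD}(\bO)$ and extends to a bounded map $\W^{1-s,2}_{\bD}(\bO)\to\W^{-1-s,2}_{\bD}(\bO)$, for $0<s<s_0$. My plan is to verify the two claims separately, in each case by decomposing $1-\Delta_{\bD}$ into a composition $1-\Delta_{\bD} = (\text{something}) \circ \nabla$ or, more precisely, writing its weak form as the pairing $\langle (1-\Delta_{\bD})u, \varphi\rangle = (u\mid\varphi)_{\L^2(\bO)} + (\nabla u\mid\nabla\varphi)_{\L^2(\bO)}$ and estimating each of the two terms using the gradient mapping properties from Lemma~\ref{Lem: mapping property gradient} together with the interpolation scales (b)--(d).

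\emph{The ``$+s$'' case.} Let $u\in\W^{1+s,2}_{\bD}(\bO)$. For $\varphi\in\W^{1-s,2}_{\bD}(\bO)$ I want to bound $|(u\mid\varphi)_{\L^2(\bO)} + (\nabla u\mid\nabla\varphi)_{\L^2(\bO)}|$ by $\|u\|_{\W^{1+s,2}(\bO)}\|\varphi\|_{\W^{1-s,2}(\bO)}$. The first term is controlled since $\W^{1+s,2}(\bO)\hookrightarrow\L^2(\bO)$ and $\W^{1-s,2}(\bO)\hookrightarrow\L^2(\bO)$ (using $1-s<\tfrac32$ and $1+s<\tfrac32$ for $s<s_0\le\tfrac12$). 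For the second term, note $\nabla u\in\W^{s,2}(\bO)^d$ by the first claim of Lemma~\ref{Lem: mapping property gradient} (with $t=s<\tfrac12$), and the duality pairing $(\nabla u\mid\nabla\varphi)_{\L^2(\bO)}$ should be interpreted as the action of $\nabla u\in\W^{s,2}(\bO)^d\hookrightarrow\L^2(\bO)^d$ against $\nabla\varphi$; but $\nabla\varphi$ need not be in $\L^2$. Instead I would view $\partial_i\colon\W^{1-s,2}(\bO)\to\W^{-s,2}(\bO)$ (second claim of Lemma~\ref{Lem: mapping property gradient}) and pair $\partial_i u\in\W^{s,2}(\bO)=(\W^{-s,2}(\bO))^*$ — here using that $\W^{s,2}(\bO)$ is reflexive and its anti-dual is $\W^{-s,2}(\bO)$ by duality within scale (b), which now includes $s=0$ by Lemma~\ref{Lem: Die ganze Scheisse}, so scales (b) at positive and negative smoothness are genuine duals — against $\partial_i\varphi\in\W^{-s,2}(\bO)$. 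Summing over $i$ and over the $\L^2$ term gives the bound, and since the estimate is against arbitrary $\varphi$ in the dense-enough space $\W^{1-s,2}_{\bD}(\bO)$ (density of $\C^\infty_{\bd\bO}(\bO)$ from Proposition~\ref{Prop: porous implies density of test functions and zero extension}), we conclude $(1-\Delta_{\bD})u\in\W^{-1+s,2}_{\bD}(\bO) = (\W^{1-s,2}_{\bD}(\bO))^*$ with the right norm bound. One must also check consistency: the new operator agrees with $1-\Delta_{\bD}$ on the dense subspace $\W^{1,2}_{\bD}(\bO)\cap\W^{1+s,2}_{\bD}(\bO)$, which is immediate from the formula.

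\emph{The ``$-s$'' case.} This is dual to the first, but let me be careful since the spaces $\W^{1-s,2}_{\bD}$ and $\W^{-1-s,2}_{\bD}$ sit on opposite sides of the problematic threshold $s=\tfrac12$. For $u\in\W^{1-s,2}_{\bD}(\bO)$ and $\varphi\in\W^{1+s,2}_{\bD}(\bO)$, I again estimate $(u\mid\varphi)_{\L^2(\bO)} + \sum_i(\partial_i u\mid\partial_i\varphi)$: the first term uses $\W^{1-s,2}(\bO),\W^{1+s,2}(\bO)\hookrightarrow\L^2(\bO)$; the second pairs $\partial_i u\in\W^{-s,2}(\bO)$ (second claim of Lemma~\ref{Lem: mapping property gradient}) against $\partial_i\varphi\in\W^{s,2}(\bO)$ (first claim, since $s<\tfrac12$), which is a valid $\W^{-s,2}(\bO)$--$\W^{s,2}(\bO)$ duality — once more relying on Lemma~\ref{Lem: Die ganze Scheisse} to know scale (b) runs down through $0$ so that these are honest dual pairs. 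Since $\C^\infty_{\bd\bO}(\bO)$ is dense in $\W^{1+s,2}(\bO)$ (hence a fortiori its intersection with $\W^{1+s,2}_{\bD}$ is dense enough, using $\Res_{\bD}$ continuity to stay in the subspace — here one uses that test functions in $\C^\infty_{\bd\bO}(\bO)$ automatically have vanishing trace), the bilinear estimate extends to all $\varphi$ and identifies the extension of $(1-\Delta_{\bD})u$ as an element of $(\W^{1+s,2}_{\bD}(\bO))^* = \W^{-1-s,2}_{\bD}(\bO)$. Consistency with $1-\Delta_{\bD}$ on $\W^{1,2}_{\bD}(\bO)$ follows again from the formula.

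\emph{Main obstacle.} The routine part is the bilinear estimates; the genuinely delicate point is bookkeeping the dualities so that $\W^{\pm s,2}(\bO)$ are actually anti-dual to each other and that the ``restriction/extension'' of $1-\Delta_{\bD}$ is well defined and unique — i.e.\ that the new operators really are restrictions (resp.\ extensions) of the Lax--Milgram operator and not merely operators with formally the same expression. This is where density of test functions (Proposition~\ref{Prop: porous implies density of test functions and zero extension}) and the enlarged interpolation scale including $s=0$ (Lemma~\ref{Lem: Die ganze Scheisse}) do the essential work: they guarantee that $\W^{s,2}(\bO)^* = \W^{-s,2}(\bO)$ compatibly with the $\L^2(\bO)$ pairing for all $|s|<s_0$, and that the subspace on which the various operators must agree is dense. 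A secondary subtlety is ensuring the test functions used to probe $\W^{-1\pm s,2}_{\bD}(\bO)$ lie in $\W^{1\mp s,2}_{\bD}(\bO)$ and are dense there; this needs the trace operator $\Res_{\bD}$ to behave well on the relevant range $(\tfrac12,\tfrac32)$, which is exactly Proposition~\ref{prop: JW}, together with the fact that $\C^\infty_{\bd\bO}(\bO)\subseteq\C^\infty_{\bD}(\bO)$.
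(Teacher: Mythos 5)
Your proposal is correct and follows essentially the same route as the paper: the same split of the form pairing into $(u\mid v)_2 + (\nabla u\mid\nabla v)_2$, the same control of the $\L^2$ part by embedding, the same invocation of Lemma~\ref{Lem: mapping property gradient} to read the gradient term as a $\W^{\pm s,2}(\bO)$--$\W^{\mp s,2}(\bO)$ pairing, and the same appeal to density within the interpolation scale to identify the result as a genuine restriction/extension of the Lax--Milgram operator. The extra detail you add about reflexivity and compatibility of the $\L^2$ pairing with the anti-duality $\W^{-s,2}(\bO)=(\W^{s,2}(\bO))^\ast$ is what the paper compresses into the single sentence that this duality ``extends the inner product on $\L^2(\bO)$''.
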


\begin{proof}
By definition, we have
\begin{align*}
\langle (1-\Delta_{\bD})u, v \rangle 
= (u \mid v)_{2} + (\nabla u \mid \nabla v)_{2} \qquad \mathrlap{(u,v \in \W^{1,2}_{\bD}(\bO)).}
\end{align*}
If in addition $u \in \W^{1\pm s,2}_{\bD}(\bO)$ and $v \in \W^{1\mp s,2}_{\bD}(\bO)$, then we control the first part of the right-hand side by
\begin{align*}
|(u \mid v)_{2}| \leq \|u\|_2 \|v\|_2 \leq \|u\|_{\W^{1\pm s,2}_{\bD}(\bO)} \|v\|_{\W^{1\mp s,2}_{\bD}(\bO)},
\end{align*}
whereas for the second part we first use that the $\W^{\pm s,2}(\bO)$ - $\W^{\mp s,2}(\bO)$ duality extends the inner product on $\L^2(\bO)$ and then apply Lemma~\ref{Lem: mapping property gradient} to give
\begin{align*}
| (\nabla u \mid \nabla v)_{2}| \leq \|\nabla u\|_{\W^{\pm s,2}(\bO)} \|\nabla v\|_{\W^{\mp s,2}(\bO)} \lesssim \|u\|_{\W^{1\pm s,2}_{\bD}(\bO)} \|v\|_{\W^{1\mp s,2}_{\bD}(\bO)}.
\end{align*}
By virtue of the scale (c) from Section~\ref{Sec: Interpolation scales} the fractional Sobolev spaces form a hierarchy of densely included spaces. This yields the claim.
\end{proof}

We continue by recalling an abstract extrapolation result due to {\u{S}}ne{\u{\ii}}berg \cite{Sneiberg-Original,ABES3}.  

\begin{proposition}
	\label{Prop: Sneiberg}
	Let $a< b$, let $(H_i)_{i \in [a,b]}$ and $(K_i)_{i \in [a,b]}$ be complex interpolation scales and let $T: H_a \to K_a$ be a bounded linear operator that is also $H_b \to K_b$ bounded. Then the set $\{i \in (a,b) \mid T: H_i \to K_i \text{ is an isomorphism} \}$ is open.
\end{proposition}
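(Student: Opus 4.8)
The plan is to prove the quantitative statement behind the openness: if $T\colon H_{i_0}\to K_{i_0}$ is an isomorphism for some $i_0\in(a,b)$, then $T\colon H_i\to K_i$ stays an isomorphism for all $i$ within an explicit positive distance of $i_0$, the distance depending only on $\|T^{-1}\|_{K_{i_0}\to H_{i_0}}$, on $M\coloneqq\max(\|T\|_{H_a\to K_a},\|T\|_{H_b\to K_b})$, and on the position of $i_0$ inside $[a,b]$. After an affine reparametrisation I would assume $[a,b]=[0,1]$ and write $H_\theta,K_\theta$ for the scales. By Definition~\ref{Def: Interpolation scale} (together with the endpoint convention) the scales coincide, up to uniformly equivalent norms, with the Calderón scales $[H_0,H_1]_\theta$ and $[K_0,K_1]_\theta$, so I would realise everything through Calderón's space $\mathcal F=\mathcal F(X_0,X_1)$ of functions holomorphic on the strip $\{0<\Re z<1\}$, bounded and continuous up to the boundary with $f(j+\mathrm i t)\in X_j$ and $f(j+\mathrm i t)\to0$ as $|t|\to\infty$. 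The three standard facts I would use are: (1) $\|x\|_{X_\theta}=\inf\{\|f\|_{\mathcal F}:f(\theta)=x\}$; (2) for each $z_0$ in the strip, evaluation $f\mapsto f(z_0)$ maps $\mathcal F$ into $X_{\Re z_0}$ with norm $\le1$; (3) $T$ induces a bounded map $\mathcal F(H_0,H_1)\to\mathcal F(K_0,K_1)$, $f\mapsto T\circ f$, of norm $\le M$.

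\textbf{Surjectivity by a Neumann-type iteration.} Fix $\theta_0=i_0$, let $N\coloneqq\|T_{\theta_0}^{-1}\|$ and $d_0\coloneqq\min(\theta_0,1-\theta_0)>0$. Given $y\in K_\theta$, pick $g_0\in\mathcal F(K_0,K_1)$ with $g_0(\theta)=y$ and $\|g_0\|_{\mathcal F}\le2\|y\|_{K_\theta}$. Inductively, given $g_k\in\mathcal F(K_0,K_1)$, set $v_k\coloneqq T_{\theta_0}^{-1}g_k(\theta_0)\in H_{\theta_0}$ (legitimate by (2)), pick $f_k\in\mathcal F(H_0,H_1)$ with $f_k(\theta_0)=v_k$ and $\|f_k\|_{\mathcal F}\le2N\|g_k\|_{\mathcal F}$, and put
\begin{align*}
g_{k+1}(z)\coloneqq\frac{g_k(z)-(T\circ f_k)(z)}{z-\theta_0}.
\end{align*}
Since $(T\circ f_k)(\theta_0)=Tv_k=g_k(\theta_0)$, the numerator vanishes at $\theta_0$, so $g_{k+1}$ is holomorphic, and on the boundary lines $|z-\theta_0|\ge d_0$, whence $\|g_{k+1}\|_{\mathcal F}\le d_0^{-1}(1+2MN)\|g_k\|_{\mathcal F}=:C_0\|g_k\|_{\mathcal F}$. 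Therefore $\|f_k\|_{\mathcal F}\le2NC_0^{\,k}\|g_0\|_{\mathcal F}$, and for $|\theta-\theta_0|<C_0^{-1}$ the series $F\coloneqq\sum_{k\ge0}(\theta-\theta_0)^kf_k$ converges in $\mathcal F(H_0,H_1)$ with $\|F\|_{\mathcal F}\lesssim\|y\|_{K_\theta}$. Telescoping $g_0(z)=\sum_{k=0}^{n}(z-\theta_0)^k(T\circ f_k)(z)+(z-\theta_0)^{n+1}g_{n+1}(z)$, evaluating at $z=\theta$, and letting $n\to\infty$ gives $y=g_0(\theta)=T(F(\theta))$. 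Hence $S_\theta\colon y\mapsto F(\theta)$ is a bounded right inverse of $T_\theta\colon H_\theta\to K_\theta$; in particular $T_\theta$ is onto.

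\textbf{Boundedness below by duality.} For Hilbert spaces the dual scales $(H_\theta^*)$, $(K_\theta^*)$ are again complex interpolation scales with $(X_\theta)^*=[X_0^*,X_1^*]_\theta$ (see \cite[Sec.~1.11.2]{Triebel}), the adjoint $T^*$ is bounded $K_j^*\to H_j^*$ for $j=0,1$ with the same bound $M$, and $T_{\theta_0}^*$ is invertible with $\|(T_{\theta_0}^*)^{-1}\|=N$. Applying the previous step to $T^*$ (with $H^*,K^*$ playing the roles of $K,H$) shows that, for the same range $|\theta-\theta_0|<C_0^{-1}$, the adjoint $T_\theta^*\colon K_\theta^*\to H_\theta^*$ is onto; equivalently $T_\theta$ is bounded below, i.e.\ injective with closed range. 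Combined with the surjectivity from the previous step, $T_\theta\colon H_\theta\to K_\theta$ is a continuous bijection with bounded inverse, hence an isomorphism, for all $\theta$ with $|\theta-\theta_0|<C_0^{-1}$. Undoing the reparametrisation, the set of $i\in(a,b)$ at which $T$ is an isomorphism is open.

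\textbf{Main obstacle.} The delicate point is the bookkeeping in the iteration that forces the power series to converge: it is precisely the factor $(z-\theta_0)$, together with the boundary lower bound $|z-\theta_0|\ge d_0$, that upgrades a crude $\mathcal F$-estimate into the geometric ratio $C_0$ and thereby produces the explicit radius $C_0^{-1}$. The secondary subtlety is that the right-inverse construction by itself does not yield injectivity, which is why the argument is run a second time on the adjoint scales (legitimate here since we are in the Hilbert space setting); surjectivity plus boundedness below then close the proof.
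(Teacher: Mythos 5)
The paper states Proposition~\ref{Prop: Sneiberg} without proof, merely citing \cite{Sneiberg-Original,ABES3}, so there is no internal argument to compare against. Your proof is correct, and it is in substance the modern proof of {\u{S}}ne{\u{\ii}}berg's theorem that runs through Calder\'on's space $\mathcal{F}$: the evaluation estimate, the division by $(z-\theta_0)$ together with the boundary lower bound $|z-\theta_0|\geq d_0=\min(\theta_0,1-\theta_0)$ to produce the geometric ratio $C_0=d_0^{-1}(1+2MN)$, the telescoping identity $g_0(z)=\sum_{k=0}^n(z-\theta_0)^k(T\circ f_k)(z)+(z-\theta_0)^{n+1}g_{n+1}(z)$, and the passage to adjoints for injectivity all check out, and this is also the strategy behind the quantitative version in the cited reference \cite{ABES3}. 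Two small points worth tightening for a write-up. First, Definition~\ref{Def: Interpolation scale} only asserts equivalence of norms at each fixed $\theta$, not uniformly; this is harmless because your iteration runs entirely inside the Calder\'on bracket norms and the identification with $H_\theta$, $K_\theta$ is only invoked at the end, at one fixed $\theta$ at a time. Second, the assignment $y\mapsto F(\theta)$ depends on the choices of $g_0$ and the $f_k$ and so is not canonically linear; it is cleaner to phrase this step as quantitative surjectivity (every $y\in K_\theta$ has a preimage of controlled norm) rather than a ``bounded right inverse''. Combined with the injectivity and closed range obtained from the adjoint step, the open mapping theorem then yields the isomorphism, and you do not even need the explicit inverse bound for the openness statement itself. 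Finally, when you say the dual scales are ``again complex interpolation scales'', note that the inclusions reverse direction ($H_0^*\subseteq H_1^*$); this is a cosmetic mismatch with Definition~\ref{Def: Interpolation scale}, irrelevant to the argument since the Calder\'on bracket is what you actually use and $[X_0^*,X_1^*]_\theta=(X_\theta)^*$ holds in the Hilbert space setting.
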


This enables us to complete the first part of Theorem~\ref{Thm: Laplace} through the following

\begin{proposition}
\label{Prop: Laplacian isomorphism extrapolates}
There exists $\varepsilon \in (0, \frac{1}{2})$ such that $1-\Delta_{\bD}$ is an isomorphism between $\W^{1+s,2}_{\bD}(\bO)$ and $\W^{-1+s,2}_{\bD}(\bO)$ for all $s \in (-\eps,\eps)$.
\end{proposition}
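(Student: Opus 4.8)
The plan is to combine the mapping properties from Proposition~\ref{Prop: mapping properties Laplacian} with the {\u{S}}ne{\u{\ii}}berg extrapolation result in Proposition~\ref{Prop: Sneiberg}. First I would fix $s_0 \in (0,\tfrac{1}{2}]$ as in Proposition~\ref{Prop: porous implies density of test functions and zero extension} and pick some $s_1 \in (0, s_0)$. The point is that Proposition~\ref{Prop: mapping properties Laplacian} provides a single operator — namely $1-\Delta_{\bD}$ — that acts boundedly as $\W^{1+s,2}_{\bD}(\bO) \to \W^{-1+s,2}_{\bD}(\bO)$ simultaneously for $s = -s_1$, $s=0$ and $s = s_1$, hence (by interpolation, or by just invoking boundedness at the two endpoints) on the whole interval $s \in [-s_1, s_1]$. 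To apply {\u{S}}ne{\u{\ii}}berg I need the domain and target families to be complex interpolation scales over this interval. The domain family $(\W^{1+s,2}_{\bD}(\bO))_{s \in [-s_1,s_1]}$ is a sub-scale of scale~(c) from Section~\ref{Sec: Interpolation scales}, which is indexed by $(\tfrac{1}{2}, \tfrac{3}{2})$ and contains $[-s_1,s_1] + 1 \subseteq (\tfrac{1}{2}, \tfrac{3}{2})$ since $s_1 < \tfrac12$; the target family $(\W^{-1+s,2}_{\bD}(\bO))_{s \in [-s_1,s_1]}$ is the dual scale~(d), which is indexed by $(-\tfrac32, -\tfrac12)$ and again contains $[-s_1,s_1] - 1$. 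So both are genuine complex interpolation scales on a neighbourhood of $[-s_1,s_1]$.

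Next I would observe that at the midpoint $s = 0$ the operator $1 - \Delta_{\bD}: \W^{1,2}_{\bD}(\bO) \to \W^{-1,2}_{\bD}(\bO)$ is an isomorphism. This is precisely the Lax--Milgram statement for the (self-adjoint, coercive) form $(u,v) \mapsto (u\mid v)_2 + (\nabla u \mid \nabla v)_2$ on $\W^{1,2}_{\bD}(\bO)$; the G\r{a}rding inequality \eqref{Garding} holds with $\lambda = 1$ and the form is bounded, so $\cL$ in Section~\ref{Sec: Elliptic operator} specialized to these coefficients is a topological isomorphism onto the anti-dual $\W^{-1,2}_{\bD}(\bO)$. (Here I use Remark~\ref{Rem: spaces on open sets}, which identifies $\W^{1,2}_{\bD}(\bO)$ in the sense of Definition~\ref{Def: spaces on open sets} with the form domain up to equivalent norms — this is where interior thickness and Theorem~\ref{Thm: W12D extension} get used.) Therefore $0$ lies in the set $\{ s \in (-s_1, s_1) : 1-\Delta_{\bD}: \W^{1+s,2}_{\bD}(\bO) \to \W^{-1+s,2}_{\bD}(\bO) \text{ is an isomorphism} \}$, and by Proposition~\ref{Prop: Sneiberg} this set is open. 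Hence it contains an interval $(-\eps, \eps)$ for some $\eps > 0$, and shrinking if necessary we may take $\eps \in (0, \tfrac12)$. That completes the proof.

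The only slightly delicate points are bookkeeping rather than substance. One is making sure the interpolation scales (c) and (d) really do cover the closed interval $[-s_1, s_1]$ shifted by $\pm 1$ with room to spare, so that {\u{S}}ne{\u{\ii}}berg applies on an open super-interval; this is automatic from $s_1 < \tfrac12$ and the stated index ranges $(\tfrac12,\tfrac32)$ and $(-\tfrac32,-\tfrac12)$. The other is that Proposition~\ref{Prop: mapping properties Laplacian} is formally stated only for $0 < s < s_0$, so I need to note that the case $s=0$ is the plain Lax--Milgram statement and that the negative smoothness boundedness $\W^{1-s,2}_{\bD}(\bO) \to \W^{-1-s,2}_{\bD}(\bO)$ for $s \in (0,s_1)$ follows by the same duality argument already used to produce scale~(d) from scale~(c) — or, more simply, is literally the $\mp$ branch of Proposition~\ref{Prop: mapping properties Laplacian}. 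There is no real obstacle here; the substantive work has all been front-loaded into the interpolation scales from \cite{BE} and into Lemma~\ref{Lem: mapping property gradient}, and the present proposition is the short payoff that assembles them via {\u{S}}ne{\u{\ii}}berg.
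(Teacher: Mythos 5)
Your argument is correct and is essentially identical to the paper's own proof: both fix a parameter strictly below $s_0$, invoke Proposition~\ref{Prop: mapping properties Laplacian} for the two-sided boundedness, use scales~(c) and~(d) as the complex interpolation scales, note that $s=0$ is an isomorphism by Lax--Milgram, and conclude via {\u{S}}ne{\u{\ii}}berg (Proposition~\ref{Prop: Sneiberg}). The "delicate points" you flag at the end are already handled by the $\pm$/$\mp$ formulation in Proposition~\ref{Prop: mapping properties Laplacian}, exactly as you suspect, so there is no gap.
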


\begin{proof}
We define $\beta\coloneqq \frac{1}{2} s_0$ and $I = [-\beta,\beta]$. According to Proposition~\ref{Prop: mapping properties Laplacian}, $1-\Delta_{\bD}$ extends to a bounded operator $\W^{1-\beta,2}_{\bD}(\bO) \to \W^{-1-\beta,2}_{\bD}(\bO)$. We denote this extension by $T$ and the same proposition shows that $T$ also restricts to a bounded operator $\W^{1+\beta,2}_{\bD}(\bO) \to \W^{-1+\beta,2}_{\bD}(\bO)$. From Section~\ref{Sec: Interpolation scales} we know that $(\W^{1+s,2}_{\bD}(\bO))_{i \in I}$ and $(\W^{-1+s,2}_{\bD}(\bO))_{i \in I}$ are complex interpolation scales. We know by the Lax--Milgram lemma that $s=0$ is contained in the isomorphism set in {\u{S}}ne{\u{\ii}}berg's Theorem, thus the claim follows.
\end{proof}

\subsection{Domains of fractional powers of the Laplacian}
\label{Sec: Laplace fractional power domains}

In this section we complete the proof of Theorem~\ref{Thm: Laplace}. 

Starting point for our investigation is the following inclusion from \cite[Lem.~6.10]{BE}.

\begin{lemma}
\label{Lem: Schwierige Inklusion}
Let $s \in (0,1)$, $s \neq \frac{1}{2}$, and define $\vartheta$ via $\vartheta(s+\frac{1}{2}) = 1$. Then the following set inclusion holds:
\begin{align}
\label{eq: Schwierige Inklusion}
[\L^2(\bO), \W^{s+\frac{1}{2},2}_{\bD}(\bO)]_{\vartheta s} \supseteq \begin{cases}
\W_{\bD}^{s,2}(\bO) &(\text{if $s > \frac{1}{2}$}) \\
\W^{s,2}(\bO) &(\text{if $s < \frac{1}{2}$})
\end{cases}.
\end{align}
\end{lemma}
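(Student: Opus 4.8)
The plan is to use Calderón's analytic description of the complex interpolation space. The reverse inclusion $[\L^2(\bO),\W^{s+\frac12,2}_{\bD}(\bO)]_{\vartheta s}\subseteq\W^{s,2}(\bO)$ is immediate, since $\W^{s+\frac12,2}_{\bD}(\bO)\hookrightarrow\W^{s+\frac12,2}(\bO)$ with equivalent norm and $(\W^{\sigma,2}(\bO))_{\sigma\in[0,\frac32)}$ is a complex interpolation scale (scale~(b) together with Lemma~\ref{Lem: Die ganze Scheisse}); so the whole content is the asserted inclusion. For it, given $u$ in the right-hand space one must exhibit a map $F$, analytic on the strip $\{0<\Re z<1\}$, bounded and continuous up to the closure with values in $\L^2(\bO)+\W^{s+\frac12,2}_{\bD}(\bO)$, with $F(\vartheta s)=u$ and $\sup_{t}\|F(\i t)\|_{\L^2(\bO)}+\sup_{t}\|F(1+\i t)\|_{\W^{s+\frac12,2}_{\bD}(\bO)}\lesssim\|u\|$. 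The dichotomy in the statement reflects whether the interpolation point $\vartheta s=\tfrac{s}{s+1/2}$ lies below or above the trace threshold $\tfrac{1}{2s+1}$ (equivalently, whether $s<\tfrac12$ or $s>\tfrac12$), which is precisely where the trace of $F(z)$ ceases to be defined.

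First I would reduce the statement to $\R^d$. Combining the zero extension of Proposition~\ref{Prop: porous implies density of test functions and zero extension} with a Rychkov/Jones‑type universal extension operator that is bounded on the whole scale of $\W^{\sigma,2}$‑spaces with vanishing trace on $\bD$, one obtains a single bounded operator that is a common right inverse of $|_{\bO}$ at regularity $0$ and at regularity $s+\frac12$ and that respects the $\bD$‑condition; by the usual retract argument for complex interpolation the problem transfers to $\R^d$, where $\bD$ is a $(d-1)$‑set amenable to Jonsson--Wallin trace theory. On $\R^d$ the naive family $F_0(z)=\e^{\delta(z-\vartheta s)^2}(1-\Delta)^{(s-z(s+\frac12))/2}u$ (for small $\delta>0$) has $F_0(\vartheta s)=u$ and the correct $\L^2$‑ and $\W^{s+\frac12,2}$‑bounds on the two edges of the strip, because the imaginary powers of $1-\Delta$ act isometrically on every $\W^{\sigma,2}$. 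The only defect is that $F_0(1+\i t)$ need not vanish on $\bD$, and repairing this is the heart of the matter.

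I would repair it by subtracting a lift of the trace of $F_0$: $\Res_{\bD}$ maps $\W^{\sigma,2}(\R^d)$ onto the Besov trace space of order $\sigma-\frac12$ on $\bD$ for $\sigma\in(\frac12,\frac32)$ with a bounded right inverse consistent with the scale, so $F(z):=F_0(z)-\chi(z)\cdot(\text{lift of }\Res_{\bD}F_0(z))$ — with $\chi$ analytic, $\chi(\vartheta s)=0$, $\chi(1)=1$ — still satisfies $F(\vartheta s)=u$ and has $F(1+\i t)\in\W^{s+\frac12,2}_{\bD}(\R^d)$. When $s<\tfrac12$ one cannot take traces at $\Re z=\vartheta s$, so I would instead use that $2s<1=\codim_\cH(\bD)$ and invoke Proposition~\ref{Prop: porous implies density of test functions and zero extension} to reduce, by density, to $u\in\C^\infty_{\bD}(\R^d)$, a genuine smooth function supported at positive distance from $\bD$; then $F_0(z)$ is smooth‑valued, the trace is defined throughout the strip, and the correction is a controlled perturbation. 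When $s>\tfrac12$ the function $u$ itself has vanishing trace, and the trace $\Res_{\bD}F_0(z)$ is a well‑defined analytic curve on the sub‑strip $\{\Re z>\tfrac1{2s+1}\}$, which contains both $\vartheta s$ and the right edge; there the required uniform bound on the correction amounts to a square‑function/$\H^\infty$‑calculus estimate showing that $\Res_{\bD}$ applied to the Bessel smoothing $(1-\Delta)^{-\frac14-\i\tau}u$ of the zero‑trace function $u$ is controlled in the Besov trace space by $\|u\|_{\W^{s,2}}$, uniformly in $\tau$, and one still has to bridge the gap between regularity $\tfrac12$, where this construction stops, and the endpoint $\L^2$.

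The step I expect to be the main obstacle is exactly this uniform control of the trace correction: it requires quantitative mapping properties of $\Res_{\bD}$ and of the trace lifting between the $\W^{\sigma,2}$‑scale and Besov spaces on the merely Ahlfors--David regular set $\bD$, and it is precisely here that the hypothesis $s\neq\tfrac12$ is used — at $s=\tfrac12$ the left‑hand interpolation space degenerates to the Lions--Magenes space $\W^{1/2,2}_{00}$, strictly smaller than $\W^{1/2,2}_{\bD}$, and no such correction can be bounded. A further complication is that, $\bD$ being $(d-1)$‑dimensional but not Lipschitz, nothing may be localised to half‑spaces, so the porous covering Lemma~\ref{Lem: porous covering property} and the global Jonsson--Wallin calculus must carry the whole estimate; this is what makes the inclusion genuinely technical, and in the present paper it is quoted from \cite{BE} rather than reproved.
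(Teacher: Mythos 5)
The paper itself does not prove this lemma: it is quoted verbatim from \cite[Lem.~6.10]{BE}, as you note at the very end, so there is no internal proof to compare against.

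As a free-standing argument your sketch has two genuine gaps. The first is the retract to $\R^d$. Calder\'on's retract lemma needs a \emph{single} linear map that is simultaneously bounded $\L^2(\bO)\to\L^2(\R^d)$ and $\W^{s+\frac12,2}_{\bD}(\bO)\to\W^{s+\frac12,2}_{\bD}(\R^d)$ and is inverted by restriction; ``combining'' two different operators does not produce such a map. The zero extension of Proposition~\ref{Prop: porous implies density of test functions and zero extension} is bounded only below regularity $s_0\leq\frac12$ and so fails at $s+\frac12$, while the only $\bD$-compatible extension operator the present paper supplies, Theorem~\ref{Thm: W12D extension}, is established at regularity $1$ only; an extension that is bounded at both endpoints of the couple and preserves the vanishing-trace class on the merely Ahlfors--David regular set $\bD$ is a substantial piece of the missing theory (and is part of what \cite{BE} and \cite{TripleMitrea-Brewster} build), not something to be assembled from the tools you cite.

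The second gap is the trace correction, which you flag as the main obstacle and then leave unresolved. For $s>\frac12$ you would need a uniform-in-$\tau$ bound on $\Res_{\bD}(1-\Delta)^{-\frac14-i\tau(s+\frac12)}u$ in the Jonsson--Wallin Besov space on $\bD$, together with a trace lift compatible with the whole scale; nothing of the sort is set up, and it is precisely the quantitative content of \cite[Lem.~6.10]{BE}. For $s<\frac12$ the density reduction to $u\in\C^\infty_{\bD}$ is fine in spirit but does not make the correction ``controlled'': $(1-\Delta)^{-\frac14-i\tau(s+\frac12)}u$ is not supported away from $\bD$ even when $u$ is, so the bound $\lesssim\|u\|_{\W^{s,2}}$ for the corrected family still needs a proof. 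A small aside: the exclusion $s\neq\frac12$ is forced already by the elementary fact that $\Res_{\bD}$ in Proposition~\ref{prop: JW} is defined only for regularities strictly above $\frac12$, so $\W^{\frac12,2}_{\bD}(\bO)$ on the right of \eqref{eq: Schwierige Inklusion} is simply not defined; the Lions--Magenes $\W^{\frac12,2}_{00}$ phenomenon you invoke is a downstream symptom, not the primary reason.
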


We show that the converse inclusion to \eqref{eq: Schwierige Inklusion} is continuous. This was done in \cite{BE} for sets with Ahlfors--David regular boundary but the following simplified argument for porous boundary would have already worked there.

\begin{lemma}
\label{Lem: Einfache Inklusion}
Let $s \in (0,1)$, $s \neq \frac{1}{2}$. The converse inclusion to \eqref{eq: Schwierige Inklusion} is continuous.
\end{lemma}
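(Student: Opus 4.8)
The plan is to prove the two continuous inclusions separately, but with a common first step. First I would show that for every admissible $s$ one has the continuous embedding $[\L^2(\bO), \W^{s+\frac{1}{2},2}_{\bD}(\bO)]_{\vartheta s} \hookrightarrow \W^{s,2}(\bO)$. To this end I would invoke three ingredients: (1) by Remark~\ref{Rem: spaces on open sets}, interior thickness makes $\W^{s+\frac{1}{2},2}_{\bD}(\bO)$ a closed subspace of $\W^{s+\frac{1}{2},2}(\bO)$ with equivalent norm, so functoriality of complex interpolation applied to the identity on the couples $(\L^2(\bO), \W^{s+\frac{1}{2},2}_{\bD}(\bO)) \to (\L^2(\bO), \W^{s+\frac{1}{2},2}(\bO))$ yields $[\L^2(\bO), \W^{s+\frac{1}{2},2}_{\bD}(\bO)]_{\vartheta s} \hookrightarrow [\L^2(\bO), \W^{s+\frac{1}{2},2}(\bO)]_{\vartheta s}$; (2) $\L^2(\bO)$ coincides isometrically with $\W^{0,2}(\bO)$; (3) by Lemma~\ref{Lem: Die ganze Scheisse} the scale~(b) can be extended to $s=0$, so reiteration identifies the last interpolation space with $\W^{\sigma,2}(\bO)$ where $\sigma$ is the affine combination $(1-\vartheta s)\cdot 0 + \vartheta s \cdot (s+\frac{1}{2}) = s$, which is legitimate since $s+\frac{1}{2} < \frac{3}{2}$. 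For $s < \frac{1}{2}$ this already is the assertion.

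For $s > \frac{1}{2}$ the target space is the genuinely smaller $\W^{s,2}_{\bD}(\bO)$ and I would argue by density. First note $\W^{s+\frac{1}{2},2}_{\bD}(\bO) \subseteq \W^{s,2}_{\bD}(\bO)$ continuously: an extension $\tilde u \in \W^{s+\frac{1}{2},2}_{\bD}$ of a given $u$ lies in $\W^{s,2}$, and its trace $\Res_{\bD}\tilde u$ still vanishes because the trace of Proposition~\ref{prop: JW} is defined pointwise via Lebesgue points and hence does not depend on which smoothness scale one places $\tilde u$ in; taking the infimum over extensions gives the norm bound. Now $\W^{s+\frac{1}{2},2}_{\bD}(\bO)$ is dense in $[\L^2(\bO), \W^{s+\frac{1}{2},2}_{\bD}(\bO)]_{\vartheta s}$ (it is the intersection of the couple, and intersections are dense in complex interpolation spaces), and $\W^{s,2}_{\bD}(\bO)$ is closed in $\W^{s,2}(\bO)$ by Remark~\ref{Rem: spaces on open sets}; applying the continuous embedding from the first step, the whole interpolation space is mapped into the $\W^{s,2}(\bO)$-closure of $\W^{s+\frac{1}{2},2}_{\bD}(\bO)$, which is contained in $\W^{s,2}_{\bD}(\bO)$. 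This completes the case $s > \frac{1}{2}$.

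I do not expect a genuine analytic obstacle here — the content is really packed into Lemma~\ref{Lem: Die ganze Scheisse} (hence ultimately into porosity of $\bd\bO$) and into the interior-thickness statements of Remark~\ref{Rem: spaces on open sets}. The steps that need the most care are the bookkeeping ones: checking that the complex interpolation functor is compatible with the continuous inclusions and the reiteration used above (covered by the theory recalled in Section~\ref{Sec: Interpolation scales} and \cite[Sec.~1.2.4]{Triebel}), that $\W^{0,2}(\bO) = \L^2(\bO)$ so that the two couples share their first endpoint, and the scale-independence of the trace operator $\Res_{\bD}$, which is precisely what makes the density step in the case $s > \frac{1}{2}$ go through.
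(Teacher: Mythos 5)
Your argument is correct and follows essentially the same route as the paper: first use the inclusion $\W^{s+\frac{1}{2},2}_{\bD}(\bO) \hookrightarrow \W^{s+\frac{1}{2},2}(\bO)$ together with the extended interpolation scale~(b) from Lemma~\ref{Lem: Die ganze Scheisse} to land in $\W^{s,2}(\bO)$, and then for $s>\frac{1}{2}$ recover the vanishing-trace condition via density of $\W^{s+\frac{1}{2},2}_{\bD}(\bO)$ in the interpolation space combined with closedness of $\W^{s,2}_{\bD}(\bO)$. The only cosmetic difference is that you spell out the inclusion $\W^{s+\frac{1}{2},2}_{\bD}(\bO) \subseteq \W^{s,2}_{\bD}(\bO)$ through the pointwise a.e.\ nature of $\Res_{\bD}$, which the paper takes for granted.
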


\begin{proof}
Since $\W^{s+1/2,2}_{\bD}(\bO) \subseteq \W^{s+1/2,2}(\bO)$, we can use the interpolation scale (b) in Section~\ref{Sec: Interpolation scales} (taking Lemma~\ref{Lem: Die ganze Scheisse} into account), to give
\begin{align}
\label{eq1: Einfache Inklusion}
[\L^2(\bO), \W^{s+\frac{1}{2},2}_{\bD}(\bO)]_{\vartheta s} \subseteq [\L^2(\bO), \W^{s+\frac{1}{2},2}(\bO)]_{\vartheta s} = \W^{s,2}(\bO)
\end{align}
with continuous inclusion. In the case $s<\frac{1}{2}$ this completes the proof. In the case $s>\frac{1}{2}$ we have to recover the Dirichlet condition on the right-hand side. Call $X$ the interpolation space on the left-hand side. Then $\W^{s+1/2,2}_{\bD}(\bO)$ is dense in $X$ by definition of an interpolation scale. By \eqref{eq1: Einfache Inklusion} we find that $X$ is contained in the closure of $\W^{s+1/2,2}_{\bD}(\bO)$ for the $\W^{s,2}(\bO)$-norm. But $\W^{s+1/2,2}_{\bD}(\bO) \subseteq \W^{s,2}_{\bD}(\bO)$ and the latter is a closed subspace of $\W^{s,2}(\bO)$, see Remark~\ref{Rem: spaces on open sets}. Thus, we have $X \subseteq \W^{s,2}_{\bD}(\bO)$ with continuous inclusion.
\end{proof}

By the bounded inverse theorem \eqref{eq: Schwierige Inklusion} now becomes an equality with equivalent norms. An application of the reiteration property of interpolation spaces (see Section~\ref{Sec: Interpolation scales}) reveals the interpolation spaces between $\L^2(\bO)$ and $\W^{1,2}_{\bD}(\bO)$.

\begin{proposition}
\label{Prop: Interpolation}
For $s\in (0,1)$ one has the following interpolation identity
\begin{align}
[\L^2(\bO), \W^{1,2}_{\bD}(\bO)]_s = \begin{cases}
\W^{s,2}(\bO) &(\text{if $s < 1/2$}) \\
\W_{\bD}^{s,2}(\bO) &(\text{if $s > 1/2$})
\end{cases}.
\end{align}
\end{proposition}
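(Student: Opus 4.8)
The plan is to derive Proposition~\ref{Prop: Interpolation} from Lemma~\ref{Lem: Schwierige Inklusion} and Lemma~\ref{Lem: Einfache Inklusion} together with the reiteration property of the complex interpolation scales recalled in Section~\ref{Sec: Interpolation scales}. First, combining the two lemmas via the bounded inverse theorem, for $s \in (0,1)$ with $s \neq \tfrac12$ we obtain the norm equivalence
\begin{align}
\label{eq: half-scale identity}
[\L^2(\bO), \W^{s+\frac{1}{2},2}_{\bD}(\bO)]_{\vartheta s} = \begin{cases} \W^{s,2}_{\bD}(\bO) & (s > \tfrac12) \\ \W^{s,2}(\bO) & (s < \tfrac12), \end{cases}
\end{align}
where $\vartheta$ is defined by $\vartheta(s + \tfrac12) = 1$, i.e.\ $\vartheta s = \tfrac{s}{s + 1/2}$. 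This already settles the case $s = 1$ by $[\L^2(\bO), \W^{1,2}_{\bD}(\bO)]_1 = \W^{1,2}_{\bD}(\bO)$ per our conventions, and it is the key input for all remaining exponents.

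The core of the argument is a reiteration step. Since $(\W^{t,2}_{\bD}(\bO))_{t \in (\frac12, \frac32)}$ is a complex interpolation scale (part (c) of Section~\ref{Sec: Interpolation scales}), and since $\W^{1,2}_{\bD}(\bO)$ sits inside it (recalling from Remark~\ref{Rem: spaces on open sets} that the two definitions of $\W^{1,2}_{\bD}(\bO)$ agree up to equivalent norms), one can first express $\W^{s+\frac12,2}_{\bD}(\bO)$ for $s + \tfrac12 \in (\tfrac12, 1)$ — equivalently $s \in (0, \tfrac12)$ — as a complex interpolation space with endpoint $\W^{1,2}_{\bD}(\bO)$ is \emph{not} directly possible because the scale (c) does not extend below $\tfrac12$. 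So instead I would run the reiteration the other way. For $s \in (\tfrac12, 1)$, pick an auxiliary exponent; the natural choice is to write $[\L^2(\bO), \W^{1,2}_{\bD}(\bO)]_s$ and factor $\W^{1,2}_{\bD}(\bO)$ through \eqref{eq: half-scale identity} with $s$ replaced by a value near $1$. Concretely: fix $r \in (\tfrac12, 1)$ and apply \eqref{eq: half-scale identity} to see $\W^{1,2}_{\bD}(\bO) = \W^{r + \frac12, 2}_{\bD}(\bO)$ is false — rather, use that $[\L^2(\bO), \W^{r+\frac12,2}_{\bD}(\bO)]_{\vartheta_r r} = \W^{r,2}_{\bD}(\bO)$ and choose $r$ so that $r + \tfrac12$ and $1$ bracket the target. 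The cleanest route: for $s \in (\tfrac12, 1)$ apply the reiteration theorem to the pair $(\L^2(\bO), \W^{3/2 - \eta, 2}_{\bD}(\bO))$ using that both $[\L^2, \W^{1,2}_{\bD}]_{\theta_0}$ and $[\L^2, \W^{3/2-\eta,2}_{\bD}]_{\theta_1}$ land at $\W^{s,2}_{\bD}(\bO)$ by \eqref{eq: half-scale identity}, hence by reiteration $[\L^2(\bO), \W^{1,2}_{\bD}(\bO)]_s = [\,[\L^2, \W^{3/2-\eta,2}_{\bD}]_{\alpha}, [\L^2, \W^{3/2-\eta,2}_{\bD}]_{\beta}\,]_\gamma = \W^{s,2}_{\bD}(\bO)$ for suitable $\alpha < \beta$ and $\gamma$. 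The $s < \tfrac12$ case is analogous but terminates at $\W^{s,2}(\bO)$: here one reiterates through an endpoint in $(\tfrac12, 1)$, for which \eqref{eq: half-scale identity} gives the $\W^{\cdot,2}_{\bD}(\bO)$ space, and then one more interpolation down to $\L^2(\bO)$ produces the space without boundary condition, consistently with the fact that $\W^{s,2}_{\bD} = \W^{s,2}$ for $s < \tfrac12$ at the level of whole-space spaces. Finally, $s = \tfrac12$ is recovered by reiteration from two nearby exponents straddling $\tfrac12$ — say $s_- < \tfrac12 < s_+$ with $[\L^2(\bO), \W^{1,2}_{\bD}(\bO)]_{s_\pm}$ already identified — which gives $[\L^2(\bO), \W^{1,2}_{\bD}(\bO)]_{1/2} = [\W^{s_-,2}(\bO), \W^{s_+,2}_{\bD}(\bO)]_{\mu}$; one then checks this equals $\W^{1/2,2}(\bO)$, which is the borderline case where the trace is not well-defined and no boundary condition is imposed.

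The main obstacle I anticipate is bookkeeping with the reiteration formula: one must verify in each regime that the chosen auxiliary exponents genuinely lie in the valid range of scale (c) (strictly above $\tfrac12$, strictly below $\tfrac32$) and that the composite interpolation parameters work out to the claimed $s$. Care is also needed at $s = \tfrac12$, since one is gluing a space with boundary condition (from exponents $> \tfrac12$) to a space without (from exponents $< \tfrac12$), and the claim that the result carries no boundary condition should be cross-checked — e.g.\ against the density statement in Proposition~\ref{Prop: porous implies density of test functions and zero extension} and the whole-space identification $\W^{s,2}_{\bD} = \W^{s,2}$ for $s < \tfrac12$. Everything else is a direct consequence of Lemmas~\ref{Lem: Schwierige Inklusion} and \ref{Lem: Einfache Inklusion}, the bounded inverse theorem, and the standard reiteration theorem for complex interpolation of Hilbert spaces cited in Section~\ref{Sec: Interpolation scales}.
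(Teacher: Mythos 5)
Your proposal follows the same overall strategy as the paper: combine Lemma~\ref{Lem: Schwierige Inklusion} and Lemma~\ref{Lem: Einfache Inklusion} via the bounded inverse theorem to obtain the preparatory identity~\eqref{eq: half-scale identity}, then recover the claimed interpolation identity by reiteration. That is the correct plan. However, your write-up of the key reiteration step for $s \in (\tfrac12,1)$ contains a genuine gap that borders on circularity. You state that ``both $[\L^2, \W^{1,2}_{\bD}]_{\theta_0}$ and $[\L^2, \W^{3/2-\eta,2}_{\bD}]_{\theta_1}$ land at $\W^{s,2}_{\bD}(\bO)$ by \eqref{eq: half-scale identity}''. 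The second identity does follow from \eqref{eq: half-scale identity} (with $\eta = 1-s$), but the first does not: \eqref{eq: half-scale identity} never has $\W^{1,2}_{\bD}(\bO)$ as the right-hand interpolation endpoint, because that would require $s' = \tfrac12$ which the lemmas exclude. Asserting $[\L^2(\bO), \W^{1,2}_{\bD}(\bO)]_{\theta_0} = \W^{s,2}_{\bD}(\bO)$ is precisely the statement to be proved, so invoking it here begs the question. The same gap appears in your alternative phrasing: to write $\W^{1,2}_{\bD}(\bO) = [\L^2(\bO), \W^{3/2-\eta,2}_{\bD}(\bO)]_\beta$ for some $\beta$ before reiterating, you need to \emph{prove} this identity, and you never do.

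The missing step is exactly the intermediate identity the paper establishes first: using reiteration applied to the pair $(\L^2(\bO), \W^{s+\frac12,2}_{\bD}(\bO))$ one shows, with $\vartheta = \frac{2}{2s+1}$ and a suitable $\lambda \in (0,1)$, that
\begin{align}
[\L^2(\bO), \W^{s+\frac{1}{2},2}_{\bD}(\bO)]_\vartheta
= \bigl[[\L^2(\bO), \W^{s+\frac{1}{2},2}_{\bD}(\bO)]_{\vartheta s},\, \W^{s+\frac{1}{2},2}_{\bD}(\bO)\bigr]_\lambda
= [\W^{s,2}_{\bD}(\bO), \W^{s+\frac{1}{2},2}_{\bD}(\bO)]_\lambda
= \W^{1,2}_{\bD}(\bO),
\end{align}
where the second equality is \eqref{eq: half-scale identity} and the third uses the scale (c). Only once this is in hand does the final reiteration
\begin{align}
[\L^2(\bO), \W^{1,2}_{\bD}(\bO)]_s = [\L^2(\bO), [\L^2(\bO), \W^{s+\frac{1}{2},2}_{\bD}(\bO)]_\vartheta]_s = [\L^2(\bO), \W^{s+\frac{1}{2},2}_{\bD}(\bO)]_{\vartheta s} = \W^{s,2}_{\bD}(\bO)
\end{align}
become a theorem rather than a restatement of the goal. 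Your treatment of $t < \tfrac12$ is vague but points in the correct direction (reiterate through $[\L^2(\bO), \W^{1,2}_{\bD}(\bO)]_{t+\frac12}$ using the already-settled case, then apply \eqref{eq: half-scale identity} at exponent $t$). Finally, note that the proposition makes no claim at $s = \tfrac12$, so your paragraph about that case is extraneous; moreover your conjectured identification with $\W^{1/2,2}(\bO)$ at the endpoint is not justified and should not be asserted without proof, given the known $\H^{1/2}_{00}$-type subtleties at the half-integer exponent.
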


\begin{proof}
All equalities in this proof will be up to equivalent norms. We take $0<t<\frac{1}{2}<s<1$. First, we determine the interpolation space for the parameter $s$. Observe that $\vartheta$ in Lemma~\ref{Lem: Schwierige Inklusion} is given by $\vartheta = \frac{2}{2s+1}$ and consequently $\frac{1}{\vartheta} = s + \frac{1}{2}$. From $\vartheta s < \vartheta < 1$ follows the existence of some $\lambda \in (0,1)$ such that $(1-\lambda)\vartheta s + \lambda = \vartheta$. By reiteration, the preparatory interpolation identity from above and the scale (c) in Section~\ref{Sec: Interpolation scales} we arrive at
\begin{align}
[\L^2(\bO), \W^{s+\frac{1}{2},2}_{\bD}(\bO)]_\vartheta 
&= \Bigl[[\L^2(\bO), \W^{s+\frac{1}{2},2}_{\bD}(\bO)]_{\vartheta s}, \W^{s+\frac{1}{2},2}_{\bD}(\bO)\Bigr]_\lambda \\
&= [\W^{s,2}_{\bD}(\bO), \W^{s+\frac{1}{2},2}_{\bD}(\bO)]_\lambda \\
&= \W^{1,2}_{\bD}(\bO).
\end{align}
We readily deduce by the reiteration property
\begin{align*}
[\L^2(\bO), \W^{1,2}_{\bD}(\bO)]_s 
&= [\L^2(\bO), [\L^2(\bO), \W^{s+\frac{1}{2},2}_{\bD}(\bO)]_\vartheta]_s 
= [\L^2(\bO), \W^{s+\frac{1}{2},2}_{\bD}(\bO)]_{\vartheta s} \\
&= \W^{s,2}_{\bD}(\bO).
\end{align*}
Likewise, we put $\vartheta\coloneqq \frac{2}{2t+1}$. Then we obtain from the reiteration property, the previous case with $s=t+\frac{1}{2}$ and the preparatory interpolation identity that
\begin{align}
[\L^2(\bO), \W^{1,2}_{\bD}(\bO)]_t 
&= \Bigl[\L^2(\bO), [\L^2(\bO), \W^{1,2}_{\bD}(\bO)]_{t+\frac{1}{2}}\Bigr]_{\vartheta t} 
= [\L^2(\bO), \W^{t+\frac{1}{2},2}_{\bD}(\bO)]_{\vartheta t} \\
&= \W^{t,2}(\bO). \qedhere
\end{align}
\end{proof}

\begin{proof}[Proof of Theorem~\ref{Thm: Laplace}]
In view of Proposition~\ref{Prop: Laplacian isomorphism extrapolates} it only remains to determine the fractional power domains in $\L^2(\bO)$. The starting point is that $1-\Delta_{\bD}$ is a self-adjoint operator and therefore we have $\dom((1-\Delta_{\bD})^\frac{1}{2}) = \W^{1,2}_{\bD}(\bO)$ by the Kato property for self-adjoint operators. Combining Proposition~\ref{Prop: Interpolation} and the interpolation scale (e) in Section~\ref{Sec: Interpolation scales} we obtain 
\begin{align}
\dom((1-\Delta_{\bD})^\frac{\alpha}{2})= \begin{cases}
\W_{\bD}^{\alpha,2}(\bO) &(\text{if $\alpha > 1/2$}) \\
\W^{\alpha,2}(\bO) &(\text{if $\alpha < 1/2$})
\end{cases}
\end{align}
for $\alpha \in [0,1]$, all with equivalent norms. For the extrapolation we decompose fractional powers of $1-\Delta_{\bD}$ above $\frac{1}{2}$ into the inverse of the full operator and fractional powers of lower order. Since the fractional powers of $1-\Delta_{\bD}$ are invertible, see Example~\ref{Ex: Fractional powers},
$$(1-\Delta_{\bD})^\frac{1-\alpha}{2}: \W^{1-\alpha,2}_{\bD}(\bO)\to \L^2(\bO)$$
is an isomorphism for $\alpha \in [0, \frac{1}{2})$. Using duality and self-adjointness, we moreover get that $(1-\Delta_{\bD})^\frac{1-\alpha}{2}$ extends to an isomorphism between $\L^2(\bO)$ and $\W^{-1+\alpha,2}_{\bD}(\bO)$. In particular, with $\varepsilon$ from Proposition~\ref{Prop: Laplacian isomorphism extrapolates} and $\alpha \in (0,\varepsilon)$, $(1-\Delta_{\bD})^\frac{1-\alpha}{2}$ maps into the domain of the extrapolated Lax--Milgram isomorphism. On the dense subset $\dom((1-\Delta_{\bD})^\frac{1-\alpha}{2})$ of $\L^2(\bO)$ we have the decomposition $$(1-\Delta_{\bD})^{-\frac{1+\alpha}{2}}=(1-\Delta_{\bD})^{-1}(1-\Delta_{\bD})^\frac{1-\alpha}{2}.$$
Again by example~\ref{Ex: Fractional powers}, the left-hand side is an isomorphism from $\L^2(\bO)$ onto $\dom((1-\Delta_{\bD})^{\frac{1+\alpha}{2}})$. But the right-hand side extends to an isomorphism onto $\W^{1+\alpha,2}_{\bD}(\bO)$, which reveals that indeed $\dom((1-\Delta_{\bD})^\frac{1+\alpha}{2})=\W^{1+\alpha,2}_{\bD}(\bO)$.
\end{proof}
%%%%%%%%%%%%%%%%%%%%%%%%%%%%%%%%%%%%%%%%%%%%%%%%%%%%%%%%%%%%%%%%%%%%%%%%%%%%%%%%%%%%%%%%%%%%%%%%%%%%%%%%%%%%%%%%%%%%%%%%%%%%%%%%%%%%%%%%%%%%%%%%%%%%%%%%%%%%%%%%%%%%
\section{Proof of Theorem~\ref{Thm: main kato result} on interior thick sets}
\label{Sec: AKM}

This section corresponds to Step~1 of the introduction. Throughout we assume that $\bO\subseteq \R^d$ is an open and interior thick set, that $\bD \subseteq \bd \bO$ is a closed and Ahlfors--David regular portion of its boundary, and that $\bO$ is locally uniform near $\bd\bO \setminus \bD$. The proof heavily relies on \cite{Laplace-Extrapolation}, which can essentially be used as a black box, but nonetheless the reader is advised to keep a copy of that paper handy.

\subsection{The idea of Axelsson--Keith--McIntosh}
\label{Subsec: The Idea of AKM}

On the Hilbert space $H \coloneqq \L^2(\bO)^m \times \L^2(\bO)^{dm} \times \L^2(\bO)^m$ introduce the closed operators with maximal domain
\begin{align}
\label{Special AKM operators}
 \bGamma \coloneqq \begin{bmatrix} 0 & 0 & 0 \\ 1 & 0 & 0 \\ \nabla_{\bD} & 0 & 0 \end{bmatrix}, \qquad
\boldsymbol{B}_1 \coloneqq \begin{bmatrix} \boldsymbol{1} & 0 & 0 \\ 0 & 0 &0 \\ 0 & 0 & 0\end{bmatrix}, \qquad
\boldsymbol{B}_2 \coloneqq \begin{bmatrix} 0 & 0 & 0 \\ 0 & \bbd & \bc \\ 0 & \bb & \bA  \end{bmatrix},
\end{align}
where $\nabla_{\bD}$ was defined in \eqref{eq: nablaD} and $-\div_{\bD}$ is its adjoint. Then define the \emph{perturbed Dirac operator} $\bPiB \coloneqq \bGamma + \boldsymbol{B}_1 \bGamma^* \boldsymbol{B}_2$ on $\dom(\bPiB) \coloneqq \dom(\bGamma) \cap \dom(\boldsymbol{B}_1 \bGamma^* \boldsymbol{B}_2)$. It follows that
\begin{align}
\label{Special AKM operators squared}
\bPiB = \begin{bmatrix} 0 & 0 & 0  \\ 1 & 0 & 0 \\ \nabla_{\bD} & 0 & 0 \end{bmatrix} 
+ \begin{bmatrix} 0 & 1 & -\div_{\bD} \\ 0 & 0 & 0 \\ 0 & 0 & 0 \end{bmatrix} \begin{bmatrix} 0 & 0 & 0 \\ 0 & \bbd & \bc \\ 0 & \bb & \bA  \end{bmatrix}, \qquad 
\bPiB^2 = \begin{bmatrix} \bL & 0 & 0  \\ 0 & * & * \\ 0 & * & * \end{bmatrix}.
\end{align}
Here, coefficients are identified with the corresponding multiplication operators and, owing to \eqref{eq: L as composition}, the second order operator $\bL$ with correct domain appears. The precise structure of the asterisked entries is not needed. The operators in \eqref{Special AKM operators} have the following properties.
\begin{enumerate}
	\item[\quad (H1)] $\bGamma$ is \emph{nilpotent}, that is, closed, densely defined and satisfies $\Rg(\bGamma) \subseteq \Ke(\bGamma)$.
	\item[\quad (H2)] $\boldsymbol{B}_1$ and $\boldsymbol{B}_2$ are defined on $H$. There exist $\kappa_i, K_i \in (0,\infty)$, $i=1,2$, such that 
	\begin{align}
	\Re ( \boldsymbol{B}_1 U \SP U)_2 &\geq \kappa_1 \|U\|_2^2 \qquad \mathrlap{(U \in \Rg(\bGamma^*)),} \\
	\Re ( \boldsymbol{B}_2 U \SP U)_2 &\geq \kappa_2 \|U\|_2^2 \qquad \mathrlap{(U \in \Rg(\bGamma)),} \\
	\|\boldsymbol{B}_i U\|_2 &\leq K_i \|U\|_2 \qquad \mathrlap{\!(U \in H).}
	\end{align}
	\item[\quad (H3)] $\boldsymbol{B}_2 \boldsymbol{B}_1$ maps $\Rg(\bGamma^*)$ into $\Ke(\bGamma^*)$ and $\boldsymbol{B}_1 \boldsymbol{B}_2$ maps $\Rg(\bGamma)$ into $\Ke(\bGamma)$.
\end{enumerate}
Indeed, in (H2) we can take $\kappa_1 = 1$ and $\kappa_2 = \lambda$, see \eqref{eq: a rewritten} and also \eqref{Garding}. Abstract Hilbert space theory therefore yields that $\bPiB$ is bisectorial~\cite[Prop.~2.5]{AKM-QuadraticEstimates} and that the \emph{unperturbed Dirac operator} $\boldsymbol{\Pi} \coloneqq \bGamma + \bGamma^\ast$ is self-adjoint~~\cite[Cor.~4.3]{AKM-QuadraticEstimates}. 

Suppose that $\bPiB$ even has a bounded $\H^\infty$-calculus on $\cl{\Rg(\bPiB)} \subseteq H$. Then $\dom(\sqrt{\bPiB^2}) = \dom(\bPiB)$ follows with equivalent homogeneous graph norms, see Example~\ref{Ex: Abstract Kato}. In both operators the first component acts independently of the others and is defined on $\dom(\sqrt{\bL})$ and $\W^{1,2}_{\bD}(\bO)^m$, 
respectively. This gives $\dom(\sqrt{\bL}) = \W^{1,2}_{\bD}(\bO)^m$ and the Kato estimate 
\begin{align*}
\|u\|_2 + \|\nabla u\|_2 
\approx \left\|\bPiB \begin{bmatrix} u \\ 0 \\ 0 \end{bmatrix} \right\|_2 
\approx \left \|\sqrt{\bPiB^2} \begin{bmatrix} u \\ 0 \\ 0 \end{bmatrix} \right\|_2
= \|\sqrt{\bL} u\|_2 \qquad (u \in \W^{1,2}_{\bD}(\bO)^m).
\end{align*} 
Implicit constants depend on $\bL$ only through the bound for the $\H^\infty$-calculus for $\bPiB$. In order to prove Theorem~\ref{Thm: main kato result} under the additional interior thickness assumption we have to argue that $\bPiB$ indeed has a bounded $\H^\infty$-calculus with a bound that depends on $\bL$ only through its coefficient bounds, or what is equivalent thereto by McIntosh's theorem~\ref{Thm: McIntosh}, that $\bPiB$ satisfies the quadratic estimate
\begin{align}
\label{eq: QE PiB}
\int_0^\infty \|t \bPiB (1+t^2 \bPiB^2)^{-1} U\|_2^2 \; \frac{\d t}{t} \approx \|U\|_2^2 \qquad \mathrlap{(U \in \cl{\Rg(\bPiB)})}
\end{align}
with the same dependency of the implicit constants.

\subsection{Quadratic estimates for Dirac operators}
\label{Subsec: QE for Dirac}

There are general frameworks of perturbed Dirac operators~\cite{AKM, AKM-QuadraticEstimates, Laplace-Extrapolation}, each of which starts from a triple of operators $(\bGamma, \boldsymbol{B}_1, \boldsymbol{B}_2)$ on $H$ that verifies (H1) - (H3). Additional hypotheses (H4) - (H7) on the operators and certain geometric assumptions are required in order to obtain \eqref{eq: QE PiB}. 

We will soon see that the operators in \eqref{Special AKM operators} verify (H4) - (H7) from \cite{Laplace-Extrapolation}. For the time being, we take that for granted and discuss the geometric assumptions. There are four of them~\cite[Ass.~2.1]{Laplace-Extrapolation}:
\begin{enumerate}
	\item[\quad ($\bO$)] Comparability $|B \cap \bO| \approx |B|$ holds uniformly for all balls $B$ of radius $\rad(B) \leq 1$ centered in $\bO$.
	\item[\quad ($\bd \bO$)] Comparability $\cH^{d-1}(B \cap \bd \bO) \approx r^{d-1}$ holds uniformly for all balls $B$ of radius $\rad(B) \leq 1$ centered in $\bd \bO$.
	\item[\quad ($V$)] Multiplication by $\C_0^\infty(\R^d)$-functions maps the form domain $V =\W_{\bD}^{1,2}(\bO)^m$ into itself and there exists a bounded extension operator $E: V \to \W^{1,2}(\R^d)^m$.
	\item[\quad ($\alpha$)] For some $\alpha \in (0,1)$ the complex interpolation space $[\L^2(\bO)^m, V]_\alpha$ coincides with $\W^{\alpha,2}(\bO)^m$ up to equivalent norms.
\end{enumerate}
In \cite{Laplace-Extrapolation} the terminology \emph{domain} was used for \emph{non-empty proper open subset} and $\bO$ was named $\Omega$. Their central result~\cite[Thm.~3.2]{Laplace-Extrapolation} is as follows.

\begin{theorem}
\label{Thm: EHT main result}
Under the structural assumptions (H1) - (H7) and the geometric assumptions $(\bO)$, $(\bd \bO)$, $(V)$, $(\alpha)$ the operator $\bPiB$ is bisectorial and satisfies \eqref{eq: QE PiB}. Implicit constants depend on $\boldsymbol{B}_1$, $\boldsymbol{B}_2$ only through the parameters quantified in (H2).
\end{theorem}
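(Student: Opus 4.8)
The plan is to run the Axelsson--Keith--McIntosh machine for quadratic estimates of perturbed Dirac operators, since no softer argument is available. Bisectoriality of $\bPiB$ is already a formal consequence of (H1)--(H3) by \cite[Prop.~2.5]{AKM-QuadraticEstimates}, so only \eqref{eq: QE PiB} is at stake. By McIntosh's theorem~\ref{Thm: McIntosh} it is equivalent to a bounded $\H^\infty$-calculus for $\bPiB$ on $\cl{\Rg(\bPiB)}$, and since the adjoint triple $(\bGamma,\boldsymbol{B}_2^\ast,\boldsymbol{B}_1^\ast)$ again satisfies (H1)--(H7), it suffices to prove the one-sided bound
\[
\int_0^\infty \|Q_t^B U\|_2^2\,\frac{\d t}{t} \lesssim \|U\|_2^2 \qquad (U \in \cl{\Rg(\bPiB)}),
\]
where $Q_t^B \coloneqq t\bPiB(1+t^2\bPiB^2)^{-1}$; the reverse inequality then comes from the same bound for $\bPiB^\ast$ via a duality pairing, and $\H^\infty$-boundedness follows. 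First I would introduce the self-adjoint unperturbed operator $\boldsymbol{\Pi} = \bGamma + \bGamma^\ast$, its resolvent-type averaging operators $P_t$, and split $Q_t^B = Q_t^B(1-P_t) + Q_t^B P_t$. The \emph{principal part free} piece $Q_t^B(1-P_t)$ is controlled by a Schur-type argument from the off-diagonal estimates in (H5) combined with the square function estimate for $\boldsymbol{\Pi}$, which is free by the spectral theorem.

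The remaining piece $Q_t^B P_t$ carries all the depth. One introduces a variant $\Theta_t^B$ of $Q_t^B$ composed with $\boldsymbol{B}_1$ and the \emph{principal part} $\gamma_t(x) \coloneqq (\Theta_t^B \mathbf{1})(x)$, a matrix-valued function meaningful because of the off-diagonal bounds, and reduces the estimate to two assertions. First, the \emph{principal part approximation}
\[
\int_0^\infty \|(\Theta_t^B - \gamma_t E_t)P_t U\|_2^2\,\frac{\d t}{t} \lesssim \|U\|_2^2,
\]
where $E_t$ is dyadic averaging at scale $t$; this is where the geometric hypotheses $(V)$, $(\alpha)$ and the structural hypotheses (H4), (H7) are spent --- a Poincar\'e inequality on Whitney cubes, the $\eps$-gain in the off-diagonal estimates, and the interpolation identity $[\L^2(\bO)^m, V]_\alpha = \W^{\alpha,2}(\bO)^m$ together supply the extra power of $t$ that makes the $t$-integral converge. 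Second, the statement that $|\gamma_t(x)|^2\,\frac{\d x\,\d t}{t}$ is a Carleson measure on $\bO \times (0,1]$ (the range $t \geq 1$ being harmless by kernel decay). Granting both, Carleson's embedding theorem applied to $E_t P_t U$, followed once more by the square function estimate for $\boldsymbol{\Pi}$, closes the argument.

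The heart of the matter is the Carleson estimate, proved by a stopping-time (corona) argument. From $(\bd\bO)$ one has a dyadic cube structure on $\bd\bO$ with surface measure $\approx \rad^{d-1}$, from $(\bO)$ an associated family of Euclidean Whitney-type cubes $Q$ with $|Q\cap\bO|\approx|Q|$, and porosity of $\bd\bO$ (Corollary~\ref{Cor: Boundary porous}) guarantees enough room for the Whitney decomposition near the Neumann part. For a fixed cube $Q$ one tests $\gamma_t$ against finitely many functions $\mathfrak{f}_Q^w$, one for each $w$ in a fixed finite net of the unit sphere of the relevant finite-dimensional value space, constructed from $Q$-adapted cutoffs using (H6) so that $E_t\mathfrak{f}_Q^w \approx w$ on the bulk of $Q$ at scales $t \lesssim \len(Q)$; hypothesis (H3) keeps the test functions inside the correct subspaces $\Rg(\bGamma)$, $\Ke(\bGamma)$, and the accretivity in (H2) bounds $|\gamma_t|$ from below by $|\gamma_t E_t\mathfrak{f}_Q^w|$ up to errors. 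Those errors are of exactly the two types already understood --- a ``$(1-P_t)$'' error and a ``principal part approximation'' error --- each producing a Carleson measure whose constant can be made small by choosing the net fine and the stopping parameters favourably, hence absorbed; a packing inequality over the stopping cubes then yields the Carleson bound.

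The step I expect to be the main obstacle is precisely this test-function construction for the Carleson estimate: arranging $\mathfrak{f}_Q^w$ to lie in $\dom(\bPiB)$ with controlled norm, to have averages close to a prescribed constant direction, and to generate \emph{only} errors of the two admissible types with \emph{small} Carleson constant --- all on a set $\bO$ that is merely interior thick, not doubling, and only locally uniform near $\bd\bO\setminus\bD$. The bookkeeping that transports the Poincar\'e and Sobolev/interpolation inputs to the Whitney cubes and verifies the stopping-cube packing under these weak geometric hypotheses, tying together (H4)--(H7) with $(\bO)$, $(\bd\bO)$, $(V)$, $(\alpha)$, is the delicate core; the rest is a largely mechanical adaptation of the Euclidean arguments of \cite{AKM-QuadraticEstimates, Kato-Square-Root-Proof-Systems}.
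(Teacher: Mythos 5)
The theorem you are asked to prove is not proved in this paper: it is \cite[Thm.~3.2]{Laplace-Extrapolation}, quoted verbatim (up to notation) and imported as a black box. The paper's only task with respect to this statement is to \emph{verify its hypotheses} in the setting at hand (done around the statement in Section~\ref{Subsec: QE for Dirac} and in Section~\ref{Subsec: H1 - H7}) and to observe that one of them, $(\bd\bO)$, can be weakened to mere porosity of $\bd\bO$ --- a point made precise in Lemma~\ref{Lem: bdO lemma from EHT}. So there is no paper proof to compare against.

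Your sketch is a plausible high-level account of the Axelsson--Keith--McIntosh machinery that underlies the cited theorem, but it is far from a proof --- you concede this yourself in the last paragraph --- and it contains two inaccuracies worth flagging. First, the adjoint of $\bPiB = \bGamma + \boldsymbol{B}_1\bGamma^\ast\boldsymbol{B}_2$ is $\bGamma^\ast + \boldsymbol{B}_2^\ast\bGamma\boldsymbol{B}_1^\ast$, so the adjoint triple is $(\bGamma^\ast,\boldsymbol{B}_2^\ast,\boldsymbol{B}_1^\ast)$, with $\bGamma^\ast$, not $\bGamma$, in the first slot. Second, your description of $(\bd\bO)$ as furnishing a dyadic cube structure on the boundary contradicts the paper's explicit remark that $(\bd\bO)$ enters the argument of \cite{Laplace-Extrapolation} at exactly one spot, namely a measure estimate for boundary strips intersected with $\bO$ (reproduced here as Lemma~\ref{Lem: bdO lemma from EHT}); that localisation is precisely what lets the authors swap $(\bd\bO)$ for porosity of $\bd\bO$.
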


Let us see how this relates to our geometric assumptions. Clearly ($\bO$) is just the same as \eqref{ITC}. Proposition~\ref{Prop: Interpolation} yields $(\alpha)$ for every $\alpha \in (0,\frac{1}{2})$. (Complex interpolation of (at most) countable products of spaces works componentwise~\cite[Sec.~1.18.1]{Triebel}). Componentwise application of Theorem~\ref{Thm: W12D extension} furnishes the extension operator in ($V$) and the stability property follows since multiplication by $\C_0^\infty(\R^d)$-functions is bounded on $\W^{1,2}(\bO)^m$ and maps the dense subset $\C_{\bD}^\infty(\bO)^m \subseteq V$ into itself. 

Our major point here is that $(\bd \bO)$ can be replaced by the significantly weaker assumption that $\bd \bO$ is porous. By Corollary~\ref{Cor: Boundary porous} our boundary $\bd \bO$ has the latter property. 

Fortunately, the reader does not have to go through all of \cite{Laplace-Extrapolation} in order to see why this relaxation of geometric assumptions works. Indeed, as is clearly stated in that paper before Lemma~7.6, $(\bd \bO)$ is used only once, namely to ensure validity of the following lemma (with a constant $\hat{\eta} > 0$ that happens to be $1$ under ($\bd \bO$)). Compare also with their Corollary~7.8.

\begin{lemma}
\label{Lem: bdO lemma from EHT}
If $\bO \subseteq \R^d$ is open and satisfies $(\bd \bO)$, then for each $r_0, t_0 > 0$ there exists $C>0$ and $\hat{\eta}>0$ such that
\begin{align}
\label{eq: bdO lemma from EHT}
\Big|\Big\{x \in \bO: |x-x_0| < r, \, \dist(x, \R^d \setminus \bO) \leq tr \Big\}\Big| \leq C t^{\hat{\eta}} r^d 
\end{align}
for all $x_0 \in \cl{\bO}$, $r \in (0, r_0]$ and $t \in (0,t_0]$.
\end{lemma}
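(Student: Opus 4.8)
Geometrically, \eqref{eq: bdO lemma from EHT} says that the tube of width $tr$ about $\bd\bO$ has Lebesgue measure $\lesssim t^{\hat\eta} r^d$ inside $\B(x_0,r)$; since $(\bd\bO)$ makes $\bd\bO$ behave $(d-1)$-dimensionally at all scales $\le 1$, such a tube should have measure $\lesssim (tr)\,r^{d-1}$, so the plan is to prove the estimate with $\hat\eta = 1$. I would argue in two reductions followed by one covering estimate.

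First, the case $t \ge 1$ is trivial at every scale, since then the set in \eqref{eq: bdO lemma from EHT} is contained in $\B(x_0,r)$ and has measure $\lesssim r^d \le t^{\hat\eta} r^d$. Second, I would fix a constant $r_* \coloneqq \min\{r_0, \tfrac15\}$ and reduce to radii $r \le r_*$: for $r_* < r \le r_0$, cover $\B(x_0,r)$ by $\lesssim (r/r_*)^d$ balls $\B(y_j, r_*)$ with $y_j \in \cl\bO$ (take a maximal $\tfrac{r_*}{2}$-separated subset of $\bO \cap \B(x_0,r)$), note that on each of them the set in \eqref{eq: bdO lemma from EHT} is contained in the corresponding set at centre $y_j$, radius $r_*$ and parameter $tr/r_*$ in place of $t$, and sum; this produces \eqref{eq: bdO lemma from EHT} for the given $r$, the constant picking up a factor $(r_0/r_*)^{\hat\eta}$. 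After these reductions only the range $r \le r_*$, $0 < t < 1$ remains.

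For such $r$ and $t$ let $S$ denote the set in \eqref{eq: bdO lemma from EHT}. Since for $x \in \bO$ the nearest point of $\R^d \setminus \bO$ lies on $\bd\bO$, one has $\dist(x, \R^d \setminus \bO) = \dist(x, \bd\bO)$, so the distance constraint defining $S$ refers to $\bd\bO$ (and if $\bd\bO = \emptyset$, then $S = \emptyset$). Assuming $S \neq \emptyset$ and picking $x^\ast \in S$ gives a point $p \in \bd\bO$ with $|x_0 - p| < 2r$; and every $x \in S$ has some $q \in \bd\bO$ with $|x - q| \le tr$, whence $q \in \bd\bO \cap \B(p, 4r)$, a ball of radius $4r \le 4r_* < 1$ on which $(\bd\bO)$ is available. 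Now take a maximal $tr$-separated subset $\{q_k\}_{k=1}^{N}$ of $\bd\bO \cap \B(p, 4r)$: by maximality every such $q$ lies within $tr$ of some $q_k$, so $S \subseteq \bigcup_{k=1}^N \B(q_k, 2tr)$ and $|S| \lesssim N (tr)^d$. To bound $N$, note the balls $\B(q_k, tr/2)$ are pairwise disjoint, centred in $\bd\bO$ with radius $\le r/2 \le 1$, and contained in $\B(p, 5r)$; the lower Ahlfors--David bound yields $\cH^{d-1}(\bd\bO \cap \B(q_k, tr/2)) \gtrsim (tr)^{d-1}$, and summing against the upper bound $\cH^{d-1}(\bd\bO \cap \B(p, 5r)) \lesssim r^{d-1}$ (valid since $5r \le 1$) gives $N \lesssim t^{-(d-1)}$. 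Hence $|S| \lesssim t^{-(d-1)} (tr)^d = t\, r^d$, i.e.\ \eqref{eq: bdO lemma from EHT} with $\hat\eta = 1$ and a constant depending only on $d$, on $r_0$, and on the implicit constants of $(\bd\bO)$.

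The one genuinely delicate point is that $(\bd\bO)$ is only assumed at scales $\le 1$, which is precisely why the reduction to $r \le r_*$ is needed — a direct argument at scale $r$ would require regularity of $\bd\bO$ up to scale $r$. It is also essential to invoke the \emph{lower} Ahlfors--David bound when estimating $N$: merely packing the disjoint balls $\B(q_k, tr/2)$ into $\B(p, 5r)$ by volume gives the useless bound $N \lesssim t^{-d}$, hence $\hat\eta = 0$. The remaining ingredients — the lattice covering and the bookkeeping needed to keep all centres in $\cl\bO$ and all radii below $1$ — are routine.
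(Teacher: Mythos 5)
Your proof is correct and establishes the lemma exactly as stated, with the sharp exponent $\hat\eta = 1$. It does, however, take a genuinely different route from the paper, and the difference is the whole point of the surrounding discussion. The paragraph immediately preceding the lemma explains that the goal is to \emph{relax} the hypothesis $(\bd\bO)$ to the strictly weaker assumption that $\bd\bO$ is merely porous; accordingly, the paper's proof only invokes the porosity covering bound of Lemma~\ref{Lem: porous covering property}, covering $\bd\bO \cap B$ by $\lesssim (1/t)^{s}$ balls of radius $tr$ and obtaining $\hat\eta = d - s$, where $s<d$ is the porosity covering exponent. Your argument, by contrast, uses \emph{both} sides of the Ahlfors--David comparability in $(\bd\bO)$: the lower bound forces each disjoint $\B(q_k, tr/2)$ to carry surface measure $\gtrsim (tr)^{d-1}$, the upper bound caps $\cH^{d-1}(\bd\bO \cap \B(p,5r)) \lesssim r^{d-1}$, and the comparison gives the packing count $N \lesssim t^{-(d-1)}$ and hence $\hat\eta = 1$. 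This is a clean, self-contained proof with the optimal exponent, but it genuinely needs the lower Ahlfors--David bound and therefore would not serve the paper's purpose of running the Dirac operator machinery of~\cite{Laplace-Extrapolation} under porosity alone. Both arguments share the same skeleton --- project each tube point to a nearby boundary point, cover those boundary points by small balls, double the balls and count --- so the operative difference is purely which covering input is fed in; the paper trades sharpness of $\hat\eta$ for a weaker geometric hypothesis, which is exactly what it needs. Your reduction to scales $r \le r_*$ with centres kept in $\cl{\bO}$, needed because $(\bd\bO)$ is only assumed at scales $\le 1$, mirrors the paper's split into the cases $4r\le 1$ and $4r>1$ and is handled correctly.
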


Hence, our only task in relaxing $(\bd \bO)$ is to reprove that lemma under the mere assumption that $\bd \bO$ is porous, which we will do now.

\begin{proof}[Proof of Lemma~\ref{Lem: bdO lemma from EHT} assuming only that $\bd \bO$ is porous]
Let $E$ be the set in \eqref{eq: bdO lemma from EHT}. For $t \geq 1$ the trivial bound $|E| \leq |B(x_0,r)| \lesssim r^d$ is enough. Thus, we can assume $t < 1$. 
	
To each $x \in E$ there corresponds some $x_\partial \in \bd \bO$ with $|x-x_\partial| \leq tr$. Since different $x \in E$ are at distance less than $2r$ from each other, we can pick a ball $B$ of radius $4r$ centered in $\bd \bO$ that contains all $x_\partial$. Temporarily assume $4r \leq 1$. Then we can use Lemma~\ref{Lem: porous covering property} and obtain $C \geq 1$ and $0<s<d$ such that $B \cap \bd \bO$ can be covered by at most $C(4/t)^s$ balls $B_i$ of radius $tr$ centered in $\bd \bO$. Hence, each $x \in E$ is contained in one of the balls $2B_i$ and we conclude
\begin{align*}
|E| \lesssim (2tr)^d \#_i \leq C 2^d 4^s t^{d-s} r^d.
\end{align*}
In the case $4r > 1$ we have the same type of covering property for $B \cap \bd \bO$: Indeed, first we use the Vitali lemma to cover $B \cap \bd \bO$ by $10^d (4r)^d \leq 10^d (4r_0)^d$ balls of radius $1$ centered in $\bd \bO$ and then we use Lemma~\ref{Lem: porous covering property} with balls of radius $\frac{t}{4} \leq tr$. This affects the value of $C$ but we can still take $\hat{\eta} \coloneqq d-s$.
\end{proof}

The upshot is that Theorem~\ref{Thm: EHT main result} yields the quadratic estimates~\eqref{eq: QE PiB} for $\bPiB$ and hence the proof is complete once we have verified (H4) - (H7). 

\subsection{The additional Dirac operator hypotheses}
\label{Subsec: H1 - H7}

Here are the additional hypotheses of \cite[Sect.~5]{Laplace-Extrapolation} that the operators in \eqref{Special AKM operators} have to verify. It is convenient to set $n \coloneqq m(d+2)$ for the number of components of a function in $H =  \L^2(\bO)^m \times \L^2(\bO)^{dm} \times \L^2(\bO)^m$.

\begin{enumerate}
	\item[\quad (H4)] $\boldsymbol{B}_1, \boldsymbol{B}_2$ are multiplication operators with functions in $\L^\infty(\bO; \cL(\IC^n))$.
	\item[\quad (H5)] For every $\varphi \in \C_0^\infty(\R^d)$ the associated multiplication operator $M_\varphi$ maps $\dom(\bGamma)$ into itself. The commutator $\bGamma M_\varphi - M_\varphi \bGamma$ with domain $\dom(\bGamma)$ acts via multiplication by some $c_\varphi \in \L^\infty(\bO; \cL(\IC^n))$ and its components satisfy $|c_\varphi^{i,j}(x)| \lesssim |\nabla \varphi(x)|$ for an implicit constant that does not depend on $\varphi$.
	\item[\quad (H6)] For every open ball $B$ centered in $\bO$, and for all $U \in \dom(\bGamma)$, $V \in \dom(\bGamma^*)$ both with compact support in $B \cap \bO$ it follows that
	\begin{align*}
	\bigg\lvert \int_{\bO} \bGamma U  \d x  \bigg\rvert &\lesssim \lvert B \rvert^{\frac{1}{2}} \|U\|_2, \\
	\bigg\lvert \int_{\bO} \bGamma^* V \d x \bigg\rvert &\lesssim \lvert B \rvert^{\frac{1}{2}} \|V\|_2.
	\end{align*}
	\item[\quad (H7)] There exist $\beta , \gamma \in (0, 1]$ such that the fractional powers of $\boldsymbol{\Pi} = \bGamma + \bGamma^*$ satisfy
	\begin{align*}
	\|U\|_{[H , \W^{1,2}_{\bD}(\bO)^{n}]_{\beta}} &\lesssim \|( \boldsymbol{\Pi}^2 )^{\beta/2}U\|_2 \qquad \mathrlap{(U \in \Rg(\bGamma^*) \cap \dom(\boldsymbol{\Pi}^2)),} \\
	\|V\|_{[H , \W^{1,2}_{\bD}(\bO)^{n}]_{\gamma}} &\lesssim \|(\boldsymbol{\Pi}^2 )^{\gamma/2}V\|_2 \qquad \mathrlap{(V \in \Rg(\bGamma) \cap \dom(\boldsymbol{\Pi}^2)),}
	\end{align*}
	where $[\cdot \,,\cdot]$ denotes again the complex interpolation bracket.
\end{enumerate}

In (H5) we have $\dom(\bGamma) = \W_{\bD}^{1,2}(\bO)^m \times \L^2(\bO)^{dm} \times \L^2(\bO)^m$. The mapping property follows from $(V)$ and the commutator assertion from the product rule. By duality, (H5) also holds for $\bGamma^*$ with commutators $\bGamma^* M_\varphi - M_\varphi \bGamma^* = -c_{\cl{\varphi}}^*$.

For (H6), take a unit vector $e \in \IC^{n}$ and let $\varphi \in \C_0^\infty(\bO)$ be valued in $[0,1]$ with $\varphi = 1$ on $\supp(U)$. We have $|U \cdot \bGamma^*(\varphi e)| \leq |U|$ since $\div_{\bD}(\varphi e) = \div(\varphi e) = 0$ on $\supp(U)$, see also Section~\ref{Sec: Elliptic operator}. Moreover, $\varphi \bGamma U = \bGamma U$ follows from (H5) using $\nabla \varphi = 0$ on $\supp(U)$. We obtain
\begin{align*}
\bigg|\int_{\bO} \bGamma U \cdot e  \d x \bigg| = \bigg|\int_{\bO} U\cdot \bGamma^*(\varphi e)  \d x \bigg|\leq \int_{\bO} |U|  \d x \leq |B|^{1/2} \|U\|_2,
\end{align*}
which suffices since $e$ was arbitrary. For $V$ we simply switch the roles of $\bGamma$ and $\bGamma^*$.

In (H7) we take $\beta = 1$. By choice of $U$ we conclude $U \in \W_{\bD}^{1,2}(\bO)^m \times \{0\} \times \{0\}$, so we obtain from the bounded $\H^\infty$-calculus for the self-adjoint operator $\boldsymbol{\Pi}$ and Example~\ref{Ex: Abstract Kato} that
\begin{align}
\label{eq1: H7 verification}
\|U\|_{\W^{1,2}_{\bD}(\bO)^n} = \|\boldsymbol{\Pi} U \|_2 \approx \|(\boldsymbol{\Pi}^2)^{1/2}U \|_2.
\end{align}
We take $\gamma \in (0,\eps)$ with $\eps$ as in Theorem~\ref{Thm: Laplace}. As $\boldsymbol{\Pi}^2$ corresponds to $\bA=\boldsymbol{1}$, $\bb=\bc^t=\boldsymbol{0}$, $\bbd=1$ in \eqref{Special AKM operators squared}, we discover $-\Delta_{\bD}+1$ in the upper left corner and obtain with
\begin{align}
\label{eq2: H7 verification}
 W \coloneqq \begin{bmatrix}
v \\ 0 \\ 0
\end{bmatrix}
\quad \text{that} \quad
 V 
= 
\begin{bmatrix}
0 \\ v \\ \nabla_{\bD} v
\end{bmatrix}
= \boldsymbol{\Pi} \, W,
\end{align}
where $v \in \dom(-\Delta_{\bD}+1)$. We conclude
\begin{align}
\|V\|_{[H , \W^{1,2}_{\bD}(\bO)^n]_{\gamma}}
\approx \|V\|_{\W^{\gamma,2}(\bO)^n}
&\lesssim \|v\|_{\W^{1+\gamma,2}_{\bD}(\bO)}\\
&\approx \|(-\Delta_{\bD}+1)^{1/2+\gamma/2}v\|_2
= \|(\boldsymbol{\Pi}^2)^{1/2+\gamma/2} W\|_2,
\end{align}
where the first step is due to Proposition~\ref{Prop: Interpolation}, the second step uses Lemma~\ref{Lem: mapping property gradient} and the third one follows from Theorem~\ref{Thm: Laplace}. Using~\eqref{eq1: H7 verification} and the version of Example~\ref{Ex: Fractional powers} for bisectorial operators, the last term compares to $\|\boldsymbol{\Pi} (\boldsymbol{\Pi}^2)^{\gamma/2} W\|_2 =\|(\boldsymbol{\Pi}^2)^{\gamma/2} V\|_2$. 

The proof of Theorem~\ref{Thm: main kato result} is now complete under the additional assumption that the underlying open set satisfies the interior thickness condition \eqref{ITC}. Note that here $\bO$ takes the role of $O$ in Theorem~\ref{Thm: main kato result}. At this point in the proof, our result already fully covers all earlier results from the literature.
%%%%%%%%%%%%%%%%%%%%%%%%%%%%%%%%%%%%%%%%%%%%%%%%%%%%%%%%%%%%%%%%%%%%%%%%%%%%%%%%%%%%%%%%%%%%%%%%%%%%%%%%%%%%%%%%%%%%%%%%%%%%%%%%%%%%%%%%%%%%%%%%%%%%%%%%%%%%%%%%%%%%
\section{Elimination of the interior thickness condition}
\label{Sec: Fattening}
In this section we complete the proof of Theorem~\ref{Thm: main kato result} with the strategy sketched in Step~3 from the introduction. In the whole section we work with two triples of domain, Dirichlet part and elliptic system: $(O,D,L)$ will be satisfying the assumptions from Theorem~\ref{Thm: main kato result} and for $(\bO, \bD, \bL)$ we start with $\bO \subseteq \R^d$ open and $\bD \subseteq \bd \bO$ closed, but further properties including interior thickness of $\bO$ will be added in the course of the proof. 

\subsection{Localization of the functional calculus to invariant open subsets}
\label{Subsec: Localization to components}

\begin{definition}
\label{Def: good projection}
	 A \emph{good projection} is an orthogonal projection $\Proo$ on $\L^2(\bO)$ that commutes with bounded multiplication operators and with $\nabla_{\bD}$ on $\W^{1,2}_{\bD}(\bO)$. In this case the inclusion map from the image $\Proo \L^2(\bO)$  of $\Proo$ back into $\L^2(\bO)$ is denoted by $\Proo^*$.
\end{definition}
Note that the inclusion map from Definition~\ref{Def: good projection} coincides with the adjoint of $\Proo: \L^2(\bO) \to \Proo \L^2(\bO)$, which justifies the usage of the symbol $\Proo^\ast$. We let good projections act componentwise on $\L^2(\bO)^m$. The concrete example the reader should have in mind is that $\Proo$ is the multiplication by the characteristic function of the union of connected components of $\bO$ if the  latter fulfills the geometric requirements from Theorem~\ref{Thm: main kato result}. We shall come back to that. In fact, we will only work with two good projections later on, but we believe that the more general localization procedure that we are going to construct in this section could prove useful elsewhere.

Let $(\Proo_i)_{i\in I}$, $I \subseteq \IN$, be a family of pairwise orthogonal good projections which decomposes $\L^2(\bO)^m$ in the sense that 
\begin{align}
\L^2(\bO)^m \cong \bigotimes_i \Proo_i \L^2(\bO)^m \quad \text{via the topological isomorphism} \quad S: U \mapsto (\Proo_i U)_i.
\end{align}
The $\ell^2$-tensor notation has been introduced in the introduction. We remind the reader that in such a context we also use $\ell^2$-tensors of operators who act componentwise on their natural domain, see again the introduction. Since a good projection commutes with $\nabla_{\bD}$ it is also bounded on $\W^{1,2}_{\bD}(\bO)$. Consequently, we have 
\begin{align}
\label{W12D good projection splitting}
\W^{1,2}_{\bD}(\bO)^m \cong \bigotimes_i \Proo_i \W^{1,2}_{\bD}(\bO)^m
\end{align}
via the same isomorphism $S$ as before.

\begin{lemma}
\label{Lem: Pi_B commutes with good projections}
	Let $\Proo$ be a good projection. Then $\Proo$ commutes with $\bL$ in the sense that $\Proo \bL \subseteq \bL \Proo$.
\end{lemma}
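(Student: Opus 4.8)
The plan is to verify the inclusion $\Proo\bL \subseteq \bL\Proo$ by unravelling the composition \eqref{eq: L as composition} and pushing $\Proo$ through each factor. First I would recall that, by \eqref{eq: L as composition}, $\bL = \begin{bmatrix} 1 & -\div_{\bD}\end{bmatrix}\begin{bmatrix} \bbd & \bc \\ \bb & \bA\end{bmatrix}\begin{bmatrix} 1 \\ \nabla_{\bD}\end{bmatrix}$ with maximal domain in $\L^2(\bO)^m$, and that a good projection $\Proo$ commutes with the bounded multiplication operators $\bbd,\bc,\bb,\bA$ and with $\nabla_{\bD}$ on $\W^{1,2}_{\bD}(\bO)$ by Definition~\ref{Def: good projection}. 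The one nontrivial point is that $\Proo$ should also commute with the \emph{unbounded} adjoint $\div_{\bD}$, and this must be phrased carefully at the level of graphs.

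The key steps, in order: (1) Since $\Proo$ is an orthogonal projection commuting with $\nabla_{\bD}$ on $\W^{1,2}_{\bD}(\bO)^m = \dom(\nabla_{\bD})$, and $\nabla_{\bD}$ is closed and densely defined, one gets $\Proo\,\nabla_{\bD} = \nabla_{\bD}\,\Proo$ as operators (with $\Proo$ preserving $\dom(\nabla_{\bD})$), hence by taking adjoints $\Proo\,\div_{\bD} \subseteq \div_{\bD}\,\Proo$: indeed if $W \in \dom(\div_{\bD})$, then for all $u \in \dom(\nabla_{\bD})$ one has $(\Proo W \mid \nabla_{\bD} u)_2 = (W \mid \Proo\nabla_{\bD}u)_2 = (W \mid \nabla_{\bD}\Proo u)_2 = -(\div_{\bD}W \mid \Proo u)_2 = -(\Proo\div_{\bD}W \mid u)_2$, so $\Proo W \in \dom(\div_{\bD})$ with $\div_{\bD}\Proo W = \Proo\div_{\bD}W$. (2) Now take $u \in \dom(\bL)$. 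By definition of the maximal restriction, this means $u \in \W^{1,2}_{\bD}(\bO)^m$ and $\begin{bmatrix}\bbd & \bc \\ \bb & \bA\end{bmatrix}\begin{bmatrix}u \\ \nabla_{\bD}u\end{bmatrix} \in \dom(\begin{bmatrix}1 & -\div_{\bD}\end{bmatrix})$, i.e.\ its second block lies in $\dom(\div_{\bD})$. Applying $\Proo$ componentwise: $\Proo u \in \W^{1,2}_{\bD}(\bO)^m$ (Step 1), the coefficient block commutes with $\Proo$ (boundedness + Definition~\ref{Def: good projection}), and the resulting second block $\Proo(\bb u + \bA\nabla_{\bD}u)$ lies in $\dom(\div_{\bD})$ by Step 1 again, so $\Proo u \in \dom(\bL)$. (3) Finally chase the identity: $\bL\Proo u = \Proo\bbd u + \Proo\bc\nabla_{\bD}u - \div_{\bD}\Proo(\bb u + \bA\nabla_{\bD}u) = \Proo(\bbd u + \bc\nabla_{\bD}u) - \Proo\div_{\bD}(\bb u + \bA\nabla_{\bD}u) = \Proo\bL u$, using that $\Proo\nabla_{\bD}u = \nabla_{\bD}\Proo u$ and the commutation of $\Proo$ with $\div_{\bD}$ from Step 1. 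This yields $\Proo\bL \subseteq \bL\Proo$.

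The only real obstacle is Step 1, namely making the heuristic ``$\Proo$ commutes with $\div_{\bD}$'' rigorous for the unbounded adjoint; everything else is a mechanical graph chase. I expect one brief lemma-internal computation (the adjoint pairing displayed above) to settle it, and then the rest is bookkeeping with the block structure of \eqref{eq: L as composition}.
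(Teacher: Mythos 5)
Your proposal is correct and takes essentially the same route as the paper, which compresses the argument into three sentences: use the block decomposition \eqref{eq: L as composition}, observe that $\Proo$ commutes with the multiplication block and the gradient block by Definition~\ref{Def: good projection}, and pass to the adjoint (using self-adjointness of $\Proo$) to get commutation with the $\div_{\bD}$ block. Your Step~1 is exactly the ``by duality and self-adjointness'' clause of the paper spelled out at the level of graphs, and your Steps~2--3 are the bookkeeping the paper leaves implicit; the content is identical.
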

\begin{proof}
Recall the block decomposition~\eqref{eq: L as composition} of $\bL$. By definition, $\Proo$ commutes with the second and third block of that decomposition. By duality and self-adjointness of the projection, $\Proo$ also commutes with the first block and therefore with $\bL$.
\end{proof}

The lemma shows that $\bL \Proo_i^\ast$ is an operator in $\Proo_i \L^2(\bO)^m$. More precisely, it is the part of $\bL$ in $\Proo_i \L^2(\bO)^m$ with maximal domain $\Proo_i \dom(\bL)$. An abstract property of functional calculi stated in Proposition~\ref{Prop: FC transformation} allows us to pull the projections in and out of the functional calculus, that is 
\begin{align}
\label{eq: calculation rules projections}
\Proo_i f(\bL) \subseteq f(\bL) \Proo_i \qquad \text{and} \qquad f(\bL \Proo_i^\ast)=f(\bL) \Proo_i^\ast.
\end{align}
As above we get in particular $\dom(f(\bL \Proo_i^\ast)) = \Proo_i \dom(f(\bL))$. We use these calculation rules freely in order to give the following decomposition of the functional calculus for $\bL$ via good projections.

\begin{proposition}
\label{Prop: f(L) decomposition}
	One has
	\begin{align} \label{eq: f(L) decomposition}
		f(\bL) = S^{-1} \Bigl[\bigotimes_i f(\bL \Proo_i^\ast) \Bigr] S \quad\text{with}\quad \dom(f(\bL)) = S^{-1} \Bigl(\bigotimes_i \dom(f(\bL \Proo_i^\ast)) \Bigr),
	\end{align}
	where the equality of spaces is with equivalent norms.
\end{proposition}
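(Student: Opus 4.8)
The plan is to conjugate by $S$: I will show that the operator identity and the domain identity in \eqref{eq: f(L) decomposition} amount to the single statement that $S$ intertwines $f(\bL)$ with $\bigotimes_i f(\bL\Proo_i^\ast)$, and that the norm equivalence then drops out because $S$ is a topological isomorphism of $\L^2(\bO)^m$. I will use the calculation rules \eqref{eq: calculation rules projections} throughout, recording the consequences I need: each $\Proo_i$ maps $\dom(f(\bL))$ into itself with $f(\bL)\Proo_i U=\Proo_i f(\bL)U$ for $U\in\dom(f(\bL))$; one has $\dom(f(\bL\Proo_i^\ast))=\Proo_i\dom(f(\bL))$; and on this domain $f(\bL\Proo_i^\ast)$ agrees with the restriction of $f(\bL)$, i.e.\ $f(\bL\Proo_i^\ast)\Proo_iU=f(\bL)\Proo_iU$. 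I will also use that $f(\bL)$ is a closed operator, which is part of the sectorial functional calculus recalled in Appendix~\ref{Sec: Functional calculus for (bi)sectorial operators}.

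For the inclusion ``$\subseteq$'' in \eqref{eq: f(L) decomposition} I would take $U\in\dom(f(\bL))$ and simply compute, using the consequences above, that $\Proo_iU\in\dom(f(\bL\Proo_i^\ast))$ for all $i$ and
\[
\sum_i\|f(\bL\Proo_i^\ast)\Proo_iU\|_2^2=\sum_i\|\Proo_if(\bL)U\|_2^2=\|Sf(\bL)U\|^2<\infty,
\]
so that $SU\in\dom\bigl(\bigotimes_i f(\bL\Proo_i^\ast)\bigr)$ with $\bigl[\bigotimes_i f(\bL\Proo_i^\ast)\bigr]SU=(\Proo_if(\bL)U)_i=Sf(\bL)U$. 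The converse inclusion, which also yields ``$\supseteq$'' for the domains, is where the one subtlety lies. Given $U$ with $SU\in\dom\bigl(\bigotimes_i f(\bL\Proo_i^\ast)\bigr)$, set $W:=S^{-1}\bigl[\bigotimes_i f(\bL\Proo_i^\ast)\bigr]SU$, so that $\Proo_iU\in\Proo_i\dom(f(\bL))\subseteq\dom(f(\bL))$ and $\Proo_iW=f(\bL\Proo_i^\ast)\Proo_iU=f(\bL)\Proo_iU$ for each $i$. The partial sums $U_N:=\sum_{i\le N}\Proo_iU$ lie in $\dom(f(\bL))$ as finite sums, and since $S$ is a topological isomorphism onto the $\ell^2$-tensor product the coordinate truncations converge, giving $U_N\to U$ and $f(\bL)U_N=\sum_{i\le N}\Proo_iW\to W$ in $\L^2(\bO)^m$. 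Closedness of $f(\bL)$ then forces $U\in\dom(f(\bL))$ with $f(\bL)U=W$, completing the proof of the operator identity and of the equality of domains.

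It remains to read off the equivalence of norms. For $U\in\dom(f(\bL))$ one has $\|U\|_2\approx\|SU\|$ because $S$ is bounded with bounded inverse, and $\|f(\bL)U\|_2\approx\bigl\|\bigl[\bigotimes_i f(\bL\Proo_i^\ast)\bigr]SU\bigr\|=\bigl(\sum_i\|f(\bL\Proo_i^\ast)\Proo_iU\|_2^2\bigr)^{1/2}$ by the identity just established; hence the graph norm of $f(\bL)$ at $U$ is comparable to $\bigl(\sum_i\|\Proo_iU\|_{\dom(f(\bL\Proo_i^\ast))}^2\bigr)^{1/2}$, which is exactly the norm carried by $S^{-1}\bigl(\bigotimes_i\dom(f(\bL\Proo_i^\ast))\bigr)$. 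I expect the only real obstacle to be the limiting argument in the converse inclusion: one must make sure both that $f(\bL)$ is closed and that truncating $U$ and $W$ to their first $N$ coordinates converges in $\L^2(\bO)^m$, the latter being a direct consequence of $S$ being an isomorphism onto $\bigotimes_i\Proo_i\L^2(\bO)^m$.
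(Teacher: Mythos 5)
Your proof is correct and follows essentially the same route as the paper's: you establish the forward inclusion by pushing the projections through via the calculation rules \eqref{eq: calculation rules projections}, and prove the converse by approximating with finitely supported partial sums and invoking closedness of $f(\bL)$, with norm equivalence read off from $S$ being a topological isomorphism. The only difference is cosmetic (you spell out the $\ell^2$-summability check in the forward direction that the paper leaves implicit).
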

\begin{proof}
If $u\in \dom(f(\bL))$, then $\Proo_i u \in \dom(f(\bL \Proo_i^\ast))$ and $\Proo_i f(\bL ) u = f(\bL \Proo_i^\ast) Q_i u$ hold for all $i$ according to \eqref{eq: calculation rules projections}. We conclude
\begin{align}
 S f(\bL) u
 = (\Proo_i f(\bL) u)_i 
 = (f(\bL \Proo_i^\ast) Q_i u)_i
 = \Big[\bigotimes_i f(\bL \Proo_i^\ast) \Big] Su
\end{align}
and the inclusion ``$\subseteq$" of operators in \eqref{eq: f(L) decomposition} follows as $S$ is an isomorphism. Conversely, let $(u_i)_i \in \dom(\bigotimes_i f(\bL \Proo_i^\ast))$. Then $u \coloneqq \sum_i u_i$ converges in the $\ell^2$ sense and we have $S u = (u_i)_i$. It remains to prove $u \in \dom(f(\bL))$. From the second identity in~\eqref{eq: calculation rules projections} we get $u_i \in \dom(f(\bL))$ for every $i$ as well as
\begin{align}
 f(\bL) \Big[\sum_{i \in I \cap \{0,\ldots,n\}} u_i \Big] = \sum_{i \in I \cap \{0,\ldots,n\}} f(\bL \Proo_i^\ast) u_i \mathrlap{\qquad (n \in \IN).}
\end{align}
In the limit as $n \to \infty$ the term on the right-hand side converges by definition of the domain of the tensorized operator and $\sum_{i \in I \cap \{0,\ldots,n\}} u_i$ tends to $u$. Since $f(\bL)$ is a closed operator, we conclude $u \in \dom(f(\bL))$.
\end{proof}

\subsection{Projections coming from indicator functions}
\label{Sec: Projections coming from indicator functions}

From now on we assume that $\bO$ is locally uniform near $\bN\coloneqq \bd \bO \setminus \bD$ and that there is a decomposition $\bO = \bigcup_i O_i$, where the $O_i$ are pairwise disjoint open sets. Since $O_i$ is open and closed in $\bO$, it follows $\bd O_i \subseteq \bd \bO$. We put $D_i \coloneqq \bD\cap \bd O_i$. We write $\Pro_i$ for the orthogonal projection on $\L^2(\bO)$ induced by multiplication with $\mathds{1}_{O_i}$. We also use the zero-extension operators 
\begin{align}
\Ext_i: \L^2(O_i) \to \Pro_i \L^2(\bO).
\end{align}
These are unitary with $\Ext_i^*$ the pointwise restriction of functions to $O_i$. This allows us to identify $\L^2(O_i)^m$ with $\Pro_i \L^2(\bO)^m$. 

We start by investigating how this identification extends to $\W^{1,2}_{\bD}(\bO)^m$.

\begin{lemma}
\label{Lem: test function zero extension}
	Let $\varphi \in \C_{D_i}^\infty(O_i)^m$. If $E\varphi\in \C_{D_i}^\infty(\R^d)^m$ is any extension, then
	\begin{align}
		\dist(\supp(E \varphi) \cap O_i, \bO\setminus O_i)>0,
	\end{align}
	and $\Ext_i \varphi \in \C_{\bD}^\infty(\bO)^m$ with $\nabla \Ext_i \varphi = \Ext_i \nabla \varphi$.
\end{lemma}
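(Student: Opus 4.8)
The plan is to split the proof into a topological core and two routine steps. I would write $F \coloneqq \supp(E\varphi)$, which is a compact subset of $\R^d$ with $\dist(F, D_i) > 0$ since $E\varphi \in \C_{D_i}^\infty(\R^d)^m$, and put $K \coloneqq F \cap \cl{O_i}$, a compact set (we may assume $\varphi \not\equiv 0$, so $K \neq \emptyset$). The key claim to prove first is that there is $\delta_0 > 0$ with $\dist(K, \bD) \ge \delta_0$ and $\dist(K, \bO \setminus O_i) \ge \delta_0$. Since $\supp(E\varphi) \cap O_i \subseteq K$, the second estimate already contains the first assertion of the lemma, so only this claim and two clean consequences remain.

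To prove the claim I would argue by contradiction: if one of the distances vanished, compactness of $K$ produces a point $x$ lying in $K$ and in $\bD$, respectively in $\cl{\bO \setminus O_i}$. In either case $x \in \cl{O_i}$, and $x \notin \bO$, because $O_i$ and $\bO \setminus O_i = \bigcup_{j \neq i} O_j$ are disjoint open subsets of $\R^d$ whose union is $\bO$, so no point of $\bO$ can lie in both $\cl{O_i}$ and $\cl{\bO \setminus O_i}$; hence $x \in \bd \bO$, and $x \in \bd O_i$ because $O_i$ is open. Now either $x \in \bD$, so that $x \in \bD \cap \bd O_i = D_i$, contradicting $x \in K \subseteq F$ together with $\dist(F, D_i) > 0$; or $x \in \bN = \bd \bO \setminus \bD$. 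This last case is the one genuine obstacle and the only place where the geometry is used: from $x \in \cl{O_i} \cap \cl{\bO \setminus O_i}$ one extracts sequences $a_k \in O_i$ and $b_k \in \bO \setminus O_i$ with $a_k, b_k \to x$; for large $k$ they lie in the $\delta$-neighbourhood of $\bN$ with $|a_k - b_k| < \delta$, where $\delta$ is the parameter of local uniformity near $\bN$, so Definition~\ref{Def: locally eps-delta} joins $a_k$ to $b_k$ by a rectifiable curve $\gamma_k \subseteq \bO$. Being connected, $\gamma_k$ must lie entirely in one of the two disjoint open sets $O_i$, $\bO \setminus O_i$, which is impossible since it meets both. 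This contradiction proves the claim.

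The remaining two steps are bookkeeping. I would fix a cut-off $\chi \in \C_0^\infty(\R^d)$ with $0 \le \chi \le 1$, equal to $1$ on the $\delta_0/3$-neighbourhood of $K$ and with $\supp \chi$ inside the $\delta_0/2$-neighbourhood of $K$, and set $u \coloneqq \chi\, E\varphi \in \C_0^\infty(\R^d)^m$. Then $\supp u \subseteq \supp \chi$ keeps distance $\ge \delta_0/2$ from $\bD$, so $u|_{\bO} \in \C_{\bD}^\infty(\bO)^m$; and $u|_{\bO} = \Ext_i \varphi$, because on $\bO \setminus O_i$ one has $\chi = 0$ (distance $\ge \delta_0$ from $K$) while on $O_i$ every point with $E\varphi \neq 0$ lies in $F \cap O_i \subseteq K$, so $\chi = 1$ and $u = E\varphi = \varphi$ there, and $u = 0 = \varphi$ where $E\varphi = 0$. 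Hence $\Ext_i \varphi = u|_{\bO} \in \C_{\bD}^\infty(\bO)^m$. Finally, since $u$ is smooth on $\R^d$, the function $\Ext_i \varphi = u|_{\bO}$ is smooth on the open set $\bO$ with $\nabla \Ext_i \varphi = (\nabla u)|_{\bO}$, which equals $\nabla(u|_{O_i}) = \nabla \varphi$ on $O_i$ and $0$ on $\bO \setminus O_i$; that is exactly $\Ext_i \nabla \varphi$. The only delicate point in the whole argument is the $x \in \bN$ case above, where the $(\eps,\delta)$-cigar is used to contradict the fact that $O_i$ is relatively clopen in $\bO$.
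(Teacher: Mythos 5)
Your proof is correct and uses the same key mechanism as the paper: the local uniformity near $\bN$ produces a curve in $\bO$ joining a point of $O_i$ to a point of $\bO\setminus O_i$, which is impossible because $O_i$ is relatively clopen in $\bO$. The organizational differences are mild but worth noting: the paper argues directly and quantitatively (for $x\in\supp(E\varphi)\cap O_i$ it picks a nearest boundary point and shows $\dist(x,\bO\setminus O_i)\geq\min(\dist(\supp E\varphi,D_i),\delta/2)$), whereas you run a compactness-and-contradiction argument; and you establish the extra bound $\dist(K,\bD)\geq\delta_0$ up front, which lets you build the cut-off around $K$ and dispose of the $\C^\infty_{\bD}$-membership in one line, while the paper builds the cut-off around $\bO\setminus O_i$ and then does a small two-case analysis near $\bD$ (near some $O_j$, $j\neq i$, versus near $D_i$) to get the same conclusion.
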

\begin{proof}
	For the first claim let $x \in \supp(E \varphi) \cap O_i$ and let $z' \in \cl{\bO \setminus O_i} = \cl{\bO} \setminus O_i$ realize the distance  of $x$ to $\bO\setminus O_i$. Hence, we can pick some $z \in \bd O_i$ on the line segment connecting $x$ and $z'$. 
	First, consider the case that $z \in D_i$. Then $$\dist(x, \bO\setminus O_i) = |x-z'| \geq |x-z| \geq \dist(\supp E\varphi, D_i) > 0.$$ Otherwise, we are in the case $z\in \bN$. If $\dist(x, \bO\setminus O_i) \geq \frac{\delta}{2}$, then we are done, so let us assume $\dist(x, \bO\setminus O_i) < \frac{\delta}{2}$, so that in particular $|x-z| < \frac{\delta}{2}$. Then there is $y \in \bO\setminus O_i$ such that $|x-y| < \frac{\delta}{2}$. This gives $x,y \in \bN_\delta$ and by Definition~\ref{Def: locally eps-delta} we can join $x$ and $y$ by a continuous path in $\bO$. But as $x \in O_i$ and $y\in \bO\setminus O_i$, this path has to cross $\bd O_i \subseteq \bd \bO$, which leads to a contradiction. Consequently, with $\rho\coloneqq \min(\dist(\supp E\varphi, D_i), \frac{\delta}{2})$ we get
	\begin{align}
	\dist(\supp(E \varphi) \cap O_i, \bO\setminus O_i) \geq \rho.
	\end{align}
	For the second claim we fix a smooth function $\chi$ equal to $1$ outside the $\rho$-neighborhood of $\bO\setminus O_i$ and equal to $0$ on the respective $\frac{\rho}{2}$-neighborhood. Then $\chi E \varphi \in \C_0^\infty(\R^d)^m$ vanishes on $\bO \setminus O_i$, whereas on $O_i$ we have $\chi E \varphi = E \varphi = \varphi$ by the choice of $\rho$. We conclude $\Ext_i \varphi = (\chi E\varphi)|_{\bO}$ and $\nabla \Ext_i \varphi = \Ext_i \nabla \varphi$ on $\bO$. Finally, let $x \in \bD$. Then either there is $j \neq i$ with $\dist(x, O_j) < \frac{\rho}{2}$, in which case we have $\dist(x, \bd \bO \setminus O_i) < \frac{\rho}{2}$ and hence $\chi = 0$ in a neighborhood of $x$. Else, we have $\dist(x, O_j) \geq \frac{\rho}{2}$ for all $j \neq i$, so that $x \in \bD \subseteq \bd \bO$ implies $x \in \bD \cap \bd O_i = D_i$ and hence $E \varphi = 0$ holds near $x$. This proves $\chi E \varphi \in \C_{\bD}^\infty(\R^d)^m$. Now, $\Ext_i \varphi \in \C_{\bD}^\infty(\bO)^m$ follows by restriction to $\bO$.
\end{proof}

\begin{proposition}
\label{Prop: Pi good projections}
	The $\Pro_i$ are good projections that satisfy $\Pro_i \W^{1,2}_{\bD}(\bO) = \Ext_i \W^{1,2}_{D_i}(O_i)$.
\end{proposition}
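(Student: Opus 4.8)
The plan is to extract everything from Lemma~\ref{Lem: test function zero extension} together with standard density and closedness arguments; no geometric input beyond that lemma will be needed. To begin, I would observe that $\Pro_i$, being multiplication by the real-valued indicator $\mathds{1}_{O_i}$, is self-adjoint and idempotent, hence an orthogonal projection on $\L^2(\bO)$, and that it commutes with every bounded multiplication operator simply because multiplication operators commute with one another. So the work lies entirely in (a) showing that $\Pro_i$ commutes with $\nabla_{\bD}$ on $\W^{1,2}_{\bD}(\bO)$, and (b) identifying $\Pro_i\W^{1,2}_{\bD}(\bO)$ with $\Ext_i\W^{1,2}_{D_i}(O_i)$.

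For (a) I would start from a test function $\psi = \eta|_{\bO}$ with $\eta \in \C_0^\infty(\R^d)$ and $\dist(\supp\eta,\bD)>0$. Since $D_i\subseteq\bD$, the restriction $\psi|_{O_i}=\eta|_{O_i}$ belongs to $\C_{D_i}^\infty(O_i)$ with $\eta$ as an admissible extension, so Lemma~\ref{Lem: test function zero extension} (applied componentwise) yields $\Pro_i\psi=\Ext_i(\psi|_{O_i})\in\C_{\bD}^\infty(\bO)$ and $\nabla\Pro_i\psi=\Ext_i(\nabla\psi|_{O_i})=\mathds{1}_{O_i}\nabla\psi=\Pro_i\nabla\psi$. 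Because $\Ext_i$ is $\L^2$-unitary onto $\Pro_i\L^2(\bO)$, these two identities give the norm bound $\|\Pro_i\psi\|_{\W^{1,2}(\bO)}=\|\psi|_{O_i}\|_{\W^{1,2}(O_i)}\le\|\psi\|_{\W^{1,2}(\bO)}$. For an arbitrary $u\in\W^{1,2}_{\bD}(\bO)$, I would take such $\psi_k\to u$ in $\W^{1,2}(\bO)$; the bound forces $(\Pro_i\psi_k)_k$ to be Cauchy, hence convergent, in $\W^{1,2}(\bO)$, and its limit must be $\Pro_i u$ since $\Pro_i$ is already $\L^2$-continuous. Passing to the limit gives $\nabla\Pro_i u=\Pro_i\nabla u$, while $\Pro_i u\in\W^{1,2}_{\bD}(\bO)$ because each $\Pro_i\psi_k\in\C_{\bD}^\infty(\bO)$. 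This proves that $\Pro_i$ is a good projection; in particular it restricts to a bounded idempotent on the Hilbert space $\W^{1,2}_{\bD}(\bO)$, so $\Pro_i\W^{1,2}_{\bD}(\bO)$ is a closed subspace and equals $\Pro_i\L^2(\bO)\cap\W^{1,2}_{\bD}(\bO)$.

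For (b) the same pair of identities shows that $\Ext_i\colon\C_{D_i}^\infty(O_i)\to\C_{\bD}^\infty(\bO)$ is an isometry for the $\W^{1,2}$-norms, hence extends to an isometry $\W^{1,2}_{D_i}(O_i)\to\W^{1,2}_{\bD}(\bO)$ that coincides with $\Ext_i$ (the latter being $\L^2$-continuous and agreeing on a dense set). Its range is complete, hence closed, and contained in $\Pro_i\L^2(\bO)\cap\W^{1,2}_{\bD}(\bO)=\Pro_i\W^{1,2}_{\bD}(\bO)$, which gives the inclusion ``$\supseteq$''. For ``$\subseteq$'', given $u\in\Pro_i\W^{1,2}_{\bD}(\bO)$ and $\psi_k\to u$ as above, I would note $\Pro_i\psi_k=\Ext_i(\psi_k|_{O_i})\to u$ in $\W^{1,2}(\bO)$, so by the isometry $(\psi_k|_{O_i})_k$ is Cauchy in $\W^{1,2}(O_i)$ with a limit $w\in\W^{1,2}_{D_i}(O_i)$; applying the restriction $\Ext_i^*$ identifies $w=u|_{O_i}$, i.e.\ $u=\Ext_i w$.

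The one genuinely delicate point in all of this — that slicing off the clopen piece $O_i$ neither destroys the Dirichlet condition on $\bD$ nor produces a spurious gradient contribution along $\bd O_i$ — has already been settled in Lemma~\ref{Lem: test function zero extension}, which uses local uniformity near $\bN$ to keep $\bd O_i$ ``invisible'' (no curve in $\bO$ can join $O_i$ to $\bO\setminus O_i$, so the support of a test function stays at positive distance from $\bO\setminus O_i$). What is left above is pure bookkeeping with density in $\W^{1,2}$ and with closedness of ranges of bounded idempotents, so I do not anticipate any further obstacle.
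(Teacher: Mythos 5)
Your argument is correct and follows the same strategy as the paper: use Lemma~\ref{Lem: test function zero extension} to establish the identity $\Pro_i\psi=\Ext_i(\psi|_{O_i})$ and the commutation $\nabla\Pro_i\psi=\Pro_i\nabla\psi$ on test functions, then extend by density. The only distinction is cosmetic — the paper extends the identity $\Pro_i\varphi=\Ext_i(\varphi|_{O_i})$ as an operator equation by continuity to obtain both inclusions directly, whereas you unwind the same continuity argument explicitly via Cauchy sequences.
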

\begin{proof}
	By Lemma~\ref{Lem: test function zero extension} we know that $\Ext_i$ is an isometry from the dense subset $\C^\infty_{D_i}(O_i)$ of $\W^{1,2}_{D_i}(O_i)$ into $\W^{1,2}_{\bD}(\bO)$. Therefore, $\Ext_i$ is $\W^{1,2}_{D_i}(O_i)\to \W^{1,2}_{\bD}(\bO)$ bounded. Since $\Ext_i$ maps into the range of $\Pro_i$, we arrive at $\Ext_i \W^{1,2}_{D_i}(O_i) \subseteq \Pro_i \W^{1,2}_{\bD}(\bO)$.
	
	On the other hand, take $\varphi \in \C^\infty_{\bD}(\bO)$. Then we have  $\varphi|_{O_i} \in \C^\infty_{D_i}(O_i)$ and
	\begin{align}
		\Pro_i \varphi = \Ext_i (\varphi|_{O_i}) \label{eq: Pi decomposition}
	\end{align} 
	is in $\C_{\bD}^\infty(\bO)$ due to Lemma~\ref{Lem: test function zero extension} with gradient 
	\begin{align}
		\nabla \Pro_i \varphi  = \Ext_i \nabla (\varphi|_{O_i}) = \Ext_i (\nabla  \varphi)|_{O_i} = \Pro_i \nabla \varphi.
	\end{align}
	As a consequence, we get that $\Pro_i$ is bounded on $\C^\infty_{\bD}(\bO)$ for the $\W^{1,2}_{\bD}(\bO)$-norm. By density, $\Pro_i$ is bounded on $\W^{1,2}_{\bD}(\bO)$ and the identity above extends to the same space, thereby showing that $\Pro_i$ is a good projection. We use this to have a second look on identity~\eqref{eq: Pi decomposition}. The left-hand side is bounded on $\W^{1,2}_{\bD}(\bO)$ by the foregoing argument and the right-hand side maps into $\Ext_i \W^{1,2}_{D_i}(O_i)$ by the very first step of this proof. We conclude $\Pro_i \W^{1,2}_{\bD}(\bO) \subseteq \Ext_i \W^{1,2}_{D_i}(O_i)$ by continuity.
\end{proof}

From the preceding proposition and \eqref{W12D good projection splitting} we get the decomposition 
\begin{align}
\label{W12D desired splitting}
\W^{1,2}_{\bD}(\bO)^m \cong \bigotimes_i \Pro_i \W^{1,2}_{\bD}(\bO)^m = \bigotimes_i \Ext_i \W^{1,2}_{D_i}(O_i)^m
\end{align}
via the usual isomorphism $S$. Analogously to Section~\ref{Sec: Elliptic operator}, we introduce in $\L^2(O_i)^m$ the divergence form operator $L_i$ with coefficients $\boldsymbol{A}|_{O_i}$, $\boldsymbol{b}|_{O_i}$, $\boldsymbol{c}|_{O_i}$, $\boldsymbol{d}|_{O_i}$ corresponding to the sesquilinear form
\begin{align}
a_i: \W^{1,2}_{D_i}(O_i)^m \times \W^{1,2}_{D_i}(O_i)^m \to \IC, \qquad a_i(u,v) = \int_{O_i} \; \begin{bmatrix} \boldsymbol{d} & \boldsymbol{c} \\ \boldsymbol{b} & \boldsymbol{A} \end{bmatrix} \begin{bmatrix} u \\ \nabla u \end{bmatrix}  \cdot \cl{\begin{bmatrix} v \\ \nabla v \end{bmatrix}} \d x.
\end{align}
We will see momentarily that this operator is unitarily equivalent to $\bL \Pro_i^\ast$. As in Section~\ref{Sec: Mapping properties for Laplacian}, the key lies in showing unitary equivalence for the gradients. 

\begin{lemma}
\label{Lem: similarity gradient}
	The operators $\nabla_{D_i}$ and $\nabla_{\bD} \Pro_i^\ast$ are unitarily equivalent via $\Ext_i \nabla_{D_i} = \nabla_{\bD} \Pro_i^\ast \Ext_i$.
\end{lemma}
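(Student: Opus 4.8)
The statement asserts unitary equivalence via the unitary $\Ext_i\colon \L^2(O_i)^m \to \Pro_i\L^2(\bO)^m$, so it amounts to the operator identity $\Ext_i\nabla_{D_i} = \nabla_{\bD}\Pro_i^\ast\Ext_i$. The plan is to first check that both sides have the same domain and then that they agree on it. Throughout I identify $\L^2(O_i)^m$ with $\Pro_i\L^2(\bO)^m$ through $\Ext_i$, and I recall that $\Pro_i^\ast\Ext_i$ is simply the extension by zero of an $\L^2(O_i)^m$-function to all of $\bO$.

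For the domains, note that $\Ext_i$ is bounded and everywhere defined, so $\dom(\Ext_i\nabla_{D_i}) = \dom(\nabla_{D_i}) = \W^{1,2}_{D_i}(O_i)^m$. On the other hand $\Pro_i^\ast$ is the isometric inclusion $\Pro_i\L^2(\bO)^m \hookrightarrow \L^2(\bO)^m$, whence $\dom(\nabla_{\bD}\Pro_i^\ast) = \Pro_i\L^2(\bO)^m \cap \W^{1,2}_{\bD}(\bO)^m$. Since $\Pro_i$ is a good projection by Proposition~\ref{Prop: Pi good projections}, it is bounded on $\W^{1,2}_{\bD}(\bO)^m$ and acts as the identity on its own range, so this intersection equals $\Pro_i\W^{1,2}_{\bD}(\bO)^m$, which by the same proposition is $\Ext_i\W^{1,2}_{D_i}(O_i)^m$. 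Pulling back through the unitary $\Ext_i$ gives $\dom(\nabla_{\bD}\Pro_i^\ast\Ext_i) = \W^{1,2}_{D_i}(O_i)^m$, matching the domain of $\Ext_i\nabla_{D_i}$.

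It remains to verify the identity on $\W^{1,2}_{D_i}(O_i)^m$. Given $u$ in this space, put $v \coloneqq \Pro_i^\ast\Ext_i u \in \W^{1,2}_{\bD}(\bO)^m$, where membership in the form domain is again provided by Proposition~\ref{Prop: Pi good projections}. Since $v$ vanishes almost everywhere outside $O_i$ we have $\Pro_i v = v$, and because a good projection commutes with $\nabla_{\bD}$ it follows that $\nabla_{\bD}v = \Pro_i\nabla_{\bD}v$, so $\nabla_{\bD}v$ also vanishes almost everywhere outside $O_i$. On the open set $O_i$ we have $v = u$, hence by locality of weak derivatives $(\nabla_{\bD}v)|_{O_i} = \nabla u = \nabla_{D_i}u$. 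Combining these two facts gives $\nabla_{\bD}v = \Ext_i\nabla_{D_i}u$, that is $\nabla_{\bD}\Pro_i^\ast\Ext_i u = \Ext_i\nabla_{D_i}u$, as desired. Alternatively, one may first establish the identity for $\varphi \in \C^\infty_{D_i}(O_i)^m$ --- where it is exactly the content of Lemma~\ref{Lem: test function zero extension} --- and then extend it to all of $\W^{1,2}_{D_i}(O_i)^m$ by density, using boundedness of $\Ext_i\colon\W^{1,2}_{D_i}(O_i)^m\to\W^{1,2}_{\bD}(\bO)^m$ from the proof of Proposition~\ref{Prop: Pi good projections} together with boundedness of $\nabla_{\bD}$ on $\W^{1,2}_{\bD}(\bO)^m$. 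I do not anticipate any real obstacle here; the only point demanding care is the bookkeeping of domains and the identification $\Pro_i\L^2(\bO)^m\cong\L^2(O_i)^m$, both of which are supplied by Lemma~\ref{Lem: test function zero extension} and Proposition~\ref{Prop: Pi good projections}.
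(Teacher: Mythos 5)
Your proof is correct and follows essentially the same route as the paper's. The paper dispatches the lemma in two sentences: the domains agree by definition and Proposition~\ref{Prop: Pi good projections}, and the actions agree because both operators restrict the distributional gradient, which commutes with zero extension; your proof unpacks precisely these two facts, being more explicit about why the zero extension commutes with the gradient here (the good-projection property, or equivalently density from Lemma~\ref{Lem: test function zero extension} plus locality of weak derivatives).
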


\begin{proof}
	The two operators have the same domain $\W^{1,2}_{D_i}(O_i)$ by definition and Proposition~\ref{Prop: Pi good projections}. Moreover, they have the same action as both appearing gradients are restrictions of the respective distributional gradient, which commutes with the zero extension operator. 
\end{proof}

\begin{proposition}
\label{Prop: similarity L_i and part of L}
	The operators $L_i$ and $\bL \Pro_i^\ast$ are unitarily equivalent via $\Ext_i L_i = \bL \Pro_i^\ast \Ext_i$.
\end{proposition}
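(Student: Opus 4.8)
The plan is to use the representation \eqref{eq: L as composition} of $\bL$ together with the already established unitary equivalence for the gradient operators from Lemma~\ref{Lem: similarity gradient}. Recall that
\begin{align*}
\bL
= \begin{bmatrix} 1 & -\div_{\bD} \end{bmatrix}
\begin{bmatrix} \bbd & \bc \\ \bb & \bA \end{bmatrix}
\begin{bmatrix}  1 \\ \nabla_{\bD} \end{bmatrix}
\end{align*}
with maximal domain, and that $L_i$ admits the analogous factorization with $\nabla_{D_i}$, $\div_{D_i}$ and the restricted coefficients. First I would observe that $\Ext_i$ (unitary onto $\Pro_i \L^2(\bO)^m$) intertwines the three factors one at a time. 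For the right-most factor this is exactly Lemma~\ref{Lem: similarity gradient}: $\Ext_i \nabla_{D_i} = \nabla_{\bD}\Pro_i^\ast \Ext_i$, with matching domain $\W^{1,2}_{D_i}(O_i)^m$ by Proposition~\ref{Prop: Pi good projections}. For the multiplication operator in the middle, the key point is that $\bb|_{O_i}, \bc|_{O_i}, \bA|_{O_i}, \bbd|_{O_i}$ act by restriction, so $\Ext_i$ commutes with it in the obvious sense (zero extension commutes with multiplication by an $\L^\infty$ function on $\bO$, since the function is only evaluated where the extended function is supported).

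For the left-most factor, the cleanest route is duality: $-\div_{\bD}$ is by definition the adjoint of $\nabla_{\bD}$, and likewise $-\div_{D_i} = (\nabla_{D_i})^\ast$. Since $\Ext_i$ is unitary, taking adjoints in $\Ext_i \nabla_{D_i} = \nabla_{\bD}\Pro_i^\ast \Ext_i$ (equivalently $\nabla_{D_i} = \Ext_i^\ast \nabla_{\bD}\Pro_i^\ast \Ext_i$, and noting $\Ext_i^\ast = \Pro_i^\ast{}^\ast$ in the appropriate sense, with $\Pro_i^\ast$ the inclusion $\Pro_i\L^2(\bO)^m \hookrightarrow \L^2(\bO)^m$) yields $\div_{D_i} = \Ext_i^\ast \div_{\bD}\Pro_i^\ast \Ext_i$, hence $\Ext_i \div_{D_i} = \Pro_i \div_{\bD}\Pro_i^\ast \Ext_i$, which is what the first block requires. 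Assembling the three intertwining relations and using that $\Pro_i^\ast\Ext_i = \Ext_i$ and $\Ext_i^\ast\Pro_i = \Ext_i^\ast$ (as $\Ext_i$ already maps into the range of $\Pro_i$), one obtains $\Ext_i L_i \subseteq \bL \Pro_i^\ast \Ext_i$ on the natural domain, and then equality on domains follows because each factor's domain is transported exactly: $\dom(\bL\Pro_i^\ast) = \Pro_i\dom(\bL)$ and, under the block description, $\Pro_i\dom(\bL)$ corresponds via $\Ext_i$ precisely to $\dom(L_i)$, since a function $u \in \W^{1,2}_{D_i}(O_i)^m$ lies in $\dom(L_i)$ iff the composite $\begin{bmatrix}1 & -\div_{D_i}\end{bmatrix}\begin{bmatrix}\bbd & \bc\\ \bb & \bA\end{bmatrix}\begin{bmatrix}1\\ \nabla_{D_i}\end{bmatrix}u \in \L^2(O_i)^m$, and this condition is carried over verbatim by the intertwining.

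The main obstacle I anticipate is the bookkeeping around the adjoint step: one must be careful that $\div_{D_i}$ and $\div_{\bD}$, which are defined only as abstract (unbounded) adjoints with no explicit domain description (as noted at the end of Section~\ref{Sec: Elliptic operator}), really do intertwine under the unitary $\Ext_i$ — this is where the unitarity of $\Ext_i$ is essential, since adjoints behave well only under unitary (not merely bounded invertible) conjugation. A clean way to sidestep a direct domain argument for $\div$ is to phrase the whole identity at the level of the form: one checks $a_i(u,v) = a(\Ext_i u, \Ext_i v)$ for all $u,v \in \W^{1,2}_{D_i}(O_i)^m$, which is immediate from Lemma~\ref{Lem: similarity gradient} and the fact that the coefficients on $O_i$ are the restrictions of those on $\bO$, together with $\Ext_i$ being an isometry onto $\Pro_i\L^2(\bO)^m$ and $\Pro_i$ being a good projection so that $a(\Ext_i u, w) = a(\Ext_i u, \Pro_i w)$ for all $w \in \W^{1,2}_{\bD}(\bO)^m$. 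From this form identity and the description of $L_i$, $\bL\Pro_i^\ast$ as the maximal operators in $\L^2(O_i)^m$, $\Pro_i\L^2(\bO)^m$ associated with $a_i$, $a|_{\Pro_i\W^{1,2}_{\bD}(\bO)^m}$ respectively, the unitary equivalence $\Ext_i L_i = \bL\Pro_i^\ast\Ext_i$ follows directly, and this is the route I would actually write up.
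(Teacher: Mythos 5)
Your final route — the one you say you would actually write up, working entirely at the level of the sesquilinear form and invoking Lemma~\ref{Lem: similarity gradient} — is exactly the proof in the paper. The paper computes $a_i(u,v) = \boldsymbol{a}(\Pro_i^\ast \Ext_i u, Ev)$ for an arbitrary extension $Ev \in \W^{1,2}_{\bD}(\bO)^m$ of $v$ (which is the same content as your pair of identities $a_i(u,v)=\boldsymbol{a}(\Ext_i u, \Ext_i v)$ and $\boldsymbol{a}(\Ext_i u, w)=\boldsymbol{a}(\Ext_i u, \Pro_i w)$), and then reads off both inclusions of domains from the maximal-operator description of $L_i$ and $\bL$. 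One small remark: you lean on the statement that $\bL\Pro_i^\ast$ is the Lax--Milgram operator in $\Pro_i\L^2(\bO)^m$ associated with $a|_{\Pro_i\W^{1,2}_{\bD}(\bO)^m}$, which the paper does not assert verbatim — it only records that $\bL\Pro_i^\ast$ is the part of $\bL$ in $\Pro_i\L^2(\bO)^m$ — so a careful write-up would either supply the (short) argument that these coincide, or simply do as the paper does and verify the two directions of the domain equality directly from the form identity without that intermediate abstraction.

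As for your first idea — intertwining the three factors of \eqref{eq: L as composition} one at a time and handling $\div$ by taking adjoints — you correctly flag the bookkeeping issue: $\div_{\bD}$ is defined as the adjoint of $\nabla_{\bD}\colon\L^2(\bO)^m\to\L^2(\bO)^{dm}$, so its unitary conjugate lives between the corresponding full spaces, whereas $\div_{D_i}$ is the adjoint of an operator between the smaller spaces over $O_i$; matching these up requires inserting and removing the projections $\Pro_i$ consistently in both the domain and codomain, and one has to argue that the maximal domain of the composition is transported correctly, which does not follow formally from the three separate intertwining relations. The form-level argument you end up with avoids all of this and is the cleaner way to go, which is presumably why the paper writes it that way.
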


\begin{proof}
First of all, we note that $\dom(L_i)$ and $\dom(\bL \Pro_i^\ast \Ext_i)$ are both subsets of $\W^{1,2}_{D_i}(O_i)^m$. For the first operator this holds by definition, whereas for the second one it follows from $\dom(\bL) \subseteq \W^{1,2}_{\bD}(\bO)^m$ and Proposition~\ref{Prop: Pi good projections}. Next, let $u,v \in \W^{1,2}_{D_i}(O_i)^m$ and let $Ev \in \W^{1,2}_{\bD}(\bO)^m$ be any extension of $v$. Lemma~\ref{Lem: similarity gradient} yields
\begin{align*}
a_i(u,v)
&= \int_{\bO} \; \begin{bmatrix} \boldsymbol{d} & \boldsymbol{c} \\ \boldsymbol{b} & \boldsymbol{A} \end{bmatrix} \begin{bmatrix} \Ext_i u \\ \Ext_i \nabla u \end{bmatrix}  \cdot \cl{\begin{bmatrix} Ev \\ \nabla Ev \end{bmatrix}} \d x \\
&= \int_{\bO} \; \begin{bmatrix} \boldsymbol{d} & \boldsymbol{c} \\ \boldsymbol{b} & \boldsymbol{A} \end{bmatrix} \begin{bmatrix} \Pro_i^\ast \Ext_i u \\ \nabla (\Pro_i^\ast \Ext_i u) \end{bmatrix}  \cdot \cl{\begin{bmatrix} Ev \\ \nabla Ev \end{bmatrix}} \d x
= \boldsymbol{a}(\Pro_i^\ast \Ext_i u, Ev).
\end{align*}
If $u \in \dom(L_i)$, then the left-hand side becomes $(L_i u\,|\,v)_{\L^2(O_i)^m} = (\Ext_i  L_iu\,|\,Ev)_{\L^2(\bO)^m}$. Since $Ev$ can be any function in $\W^{1,2}_{\bD}(\bO)^m$, we conclude that $\Pro_i^\ast \Ext_i u \in \dom(\bL)$ with $\bL \Pro_i^\ast \Ext_i u = \Ext_i L_i u$. Conversely, let $u \in \dom(\bL \Pro_i^\ast \Ext_i)$. We take $Ev = \Ext_i v$, which is admissible by Proposition~\ref{Prop: Pi good projections}, and the right-hand side becomes $(\bL \Pro_i^\ast \Ext_i u \,|\, \Ext_i v)_{\L^2(\bO)^m} = (\Ext_i^\ast \bL \Pro_i^\ast \Ext_i u \,|\, v )_{\L^2(O_i)^m}$. This proves $u \in \dom(L_i)$ with $L_i u = \Ext_i^\ast \bL \Pro_i^\ast \Ext_i u$.
\end{proof}

Let us summarize the situation. The set $\bO$ is locally uniform near $\bd \bO \setminus \bD$ and can be decomposed into pairwise disjoint open sets $O_i$. The divergence form operator $\bL$ is given by coefficients $\boldsymbol{A}$, $\boldsymbol{b}$, $\boldsymbol{c}$, $\boldsymbol{d}$ on $\bO$ and $L_i$ is the divergence form operator on $O_i$ whose coefficients are obtained by restricting the coefficients of $\bL$ to $O_i$, and which is subject to a vanishing trace condition on $D_i = \bD \cap \bd O_i$.
Combining all intermediate steps, we derive the following correspondence.

\begin{proposition}
\label{Prop: Regularity decomposition for the square root}
	The following are equivalent:
	\begin{enumerate}
		\item $\dom(\sqrt{\bL})=\W^{1,2}_{\bD}(\bO)^m$ with $\|\sqrt{\bL} u\|_2 \approx \|u\|_{\W^{1,2}(\bO)^m}$,
		\item $\dom(\sqrt{L_i})=\W^{1,2}_{D_i}(O_i)^m$ with $\|\sqrt{L_i} u\|_2 \approx \|u\|_{\W^{1,2}(O_i)^m}$ for all $i$, where the implicit constants are independent of $i$.
	\end{enumerate}
\end{proposition}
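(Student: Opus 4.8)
The plan is to read the statement off the decomposition of the functional calculus established earlier in this section, specialized to $f(z)=\sqrt z$ (a legitimate holomorphic symbol since $\bL$ and the $L_i$ are sectorial of angle $<\pi/2$ and invertible) and to the family of good projections $(\Pro_i)_i$. Before doing so I would record one uniformity remark that is used repeatedly: all the operators $\bL$ and $L_i$ are \emph{uniformly} invertible, because each $a_i$ satisfies the G\r{a}rding inequality \eqref{Garding} with the \emph{same} constant $\lambda$ (restricting the coefficients and shrinking the form domain to $\W^{1,2}_{D_i}(O_i)^m$ can only improve coercivity). Hence $\|v\|_2 \lesssim \|\sqrt{L_i}\,v\|_2$ with an $i$-independent constant, and likewise for $\bL$; consequently the graph norm of $\sqrt{L_i}$ on $\dom(\sqrt{L_i})$ is uniformly comparable to $\|\sqrt{L_i}\cdot\|_2$, and the same for $\bL$. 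This is what makes it harmless to move between graph norms and the homogeneous quantities appearing in (i) and (ii).

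Next I would apply Proposition~\ref{Prop: f(L) decomposition} with $\Proo_i=\Pro_i$ and $f=\sqrt{\cdot}$ to obtain
\[
\sqrt{\bL} = S^{-1}\Bigl[\bigotimes_i \sqrt{\bL\,\Pro_i^\ast}\Bigr]S,\qquad \dom\bigl(\sqrt{\bL}\bigr)=S^{-1}\Bigl(\bigotimes_i \dom\bigl(\sqrt{\bL\,\Pro_i^\ast}\bigr)\Bigr),
\]
with equivalent norms. By Proposition~\ref{Prop: similarity L_i and part of L} the unitary $\Ext_i$ intertwines $L_i$ with $\bL\Pro_i^\ast$, and since the sectorial functional calculus is invariant under unitary equivalence \cite{Haase} this yields $\dom(\sqrt{\bL\Pro_i^\ast})=\Ext_i\dom(\sqrt{L_i})$ and $\|\sqrt{\bL\Pro_i^\ast}\,\Ext_i v\|_2=\|\sqrt{L_i}\,v\|_2$ for all $v\in\dom(\sqrt{L_i})$. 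On the other hand, because the $\Pro_i$ are pairwise orthogonal projections decomposing $\L^2(\bO)^m$ and commuting with $\nabla_{\bD}$, and because $\Ext_i$ intertwines the distributional gradients (Proposition~\ref{Prop: Pi good projections} and Lemma~\ref{Lem: test function zero extension}), the isomorphism $S$ restricts to an \emph{isometric} identification
\[
\W^{1,2}_{\bD}(\bO)^m \;\cong\; \bigotimes_i \W^{1,2}_{D_i}(O_i)^m, \qquad \|U\|_{\W^{1,2}(\bO)^m}^2=\sum_i \|U|_{O_i}\|_{\W^{1,2}(O_i)^m}^2 .
\]

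Combining the two displays (and the preliminary remark that lets the graph norms be dropped), one gets, \emph{for every} $U\in\dom(\sqrt{\bL})$ and with absolute implicit constants, $\|\sqrt{\bL}U\|_2^2 \approx \sum_i\|\sqrt{L_i}(U|_{O_i})\|_2^2$, while $\dom(\sqrt{\bL})$ corresponds under $S$ precisely to the $\ell^2$-direct sum of the spaces $\dom(\sqrt{L_i})$. From here the equivalence is immediate. For (i)$\Rightarrow$(ii) I would test with $U=\Ext_i v$ for $v\in\W^{1,2}_{D_i}(O_i)^m$, respectively $v\in\dom(\sqrt{L_i})$: such $U$ is supported in the single component $O_i$ and lies in $\W^{1,2}_{\bD}(\bO)^m$, respectively in $\dom(\sqrt{\bL})$, so (i) forces $\dom(\sqrt{L_i})=\W^{1,2}_{D_i}(O_i)^m$ with $\|\sqrt{L_i}v\|_2\approx\|v\|_{\W^{1,2}(O_i)^m}$, and the implicit constants are literally those of (i) — hence independent of $i$. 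Conversely (ii)$\Rightarrow$(i) is read off the two displayed identities: the uniform comparison $\|v\|_{\dom(\sqrt{L_i})}\approx\|v\|_{\W^{1,2}(O_i)^m}$ makes the $\ell^2$-direct sum of the $\dom(\sqrt{L_i})$ coincide, as a set and with equivalent norms, with that of the $\W^{1,2}_{D_i}(O_i)^m$, i.e.\ with $\W^{1,2}_{\bD}(\bO)^m$.

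I do not anticipate a genuine obstacle: all the analysis has already been absorbed into the preceding propositions. The only things that require care are (a) the uniform-in-$i$ invertibility of the $L_i$, which is what legitimizes passing between graph and homogeneous norms, and (b) the bookkeeping with $\ell^2$-tensor norms — specifically, that testing with single-component functions transfers not just the identities but the \emph{uniformity} of the constants, and that equality of two $\ell^2$-direct sums as sets already forces the defining one-sided comparison constants to be bounded in $i$. These points are routine but should be spelled out, since uniformity in $i$ is the entire purpose of the fattening construction in Section~\ref{Sec: Fattening}.
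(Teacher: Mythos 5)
Your proof is correct and takes essentially the same route as the paper's: apply Proposition~\ref{Prop: f(L) decomposition} with $f(z)=\sqrt z$, use the unitary equivalence from Proposition~\ref{Prop: similarity L_i and part of L} via Proposition~\ref{Prop: FC transformation}.(ii), and compare the two $\ell^2$-tensor decompositions of $\dom(\sqrt{\bL})$ and $\W^{1,2}_{\bD}(\bO)^m$ coming from \eqref{W12D desired splitting}. The extra bookkeeping you spell out — the $i$-uniform G\r{a}rding constant that lets one pass between graph and homogeneous norms, and the one-sentence argument that equality of $\ell^2$-direct sums with equivalent norms is equivalent to uniform componentwise equivalence — is implicit in the paper's shorter write-up but adds nothing structurally different.
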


\begin{proof}
Proposition~\ref{Prop: similarity L_i and part of L} gives $\bL \Pro_i^\ast = \Ext_i L_i \Ext_i^\ast$. Then we use Proposition~\ref{Prop: f(L) decomposition} for $f(z)\coloneqq \sqrt{z}$ and the compatibility of the functional calculus with unitary equivalences in Proposition~\ref{Prop: FC transformation}.(ii) to figure out that
\begin{align}
		\dom(\sqrt{\bL})
		=S^{-1}\Bigl(\bigotimes_i \dom(\sqrt{\bL \Pro_i^\ast}) \Bigr)  
		= S^{-1}\Bigl( \bigotimes_i \Ext_i \dom(\sqrt{L_i}) \Bigr).
\end{align}
On the other hand, we conclude from \eqref{W12D desired splitting} that
\begin{align}
\W^{1,2}_{\bD}(\bO)^m = S^{-1} \Bigl( \bigotimes_i \Pro_i \W^{1,2}_{\bD}(\bO)^m \Bigr) = S^{-1}\Bigl( \bigotimes_i \Ext_i \W^{1,2}_{D_i}(O_i)^m \Bigl).
\end{align}
Both chains of equalities are topological. Now, (ii) is the same as saying that the tensor spaces on the right-hand sides coincide up to equivalent norms. Since $S$ is an isomorphism, this is equivalent to (i).
\end{proof}

\subsection{Embedding of $O$ into an interior thick set}
\label{Subsec: fat set embedding}

Now, we reverse the order of reasoning by embedding the geometric configuration $(O,D)$ in a \enquote{fattened version} $(\bO, \bD)$ with the same geometric quality but additionally satisfying the interior thickness condition. This is the content of the following proposition, the proof of which will occupy the rest of the section.

\begin{proposition} \label{Prop: fat set embedding} 
	Let $O$ and $D$ be as in Theorem~\ref{Thm: main kato result} and put $N\coloneqq \bd O \setminus D$. Then there exists an open interior thick set $\bO \supseteq O$ such that $\bO \setminus O$ is open and $\bd O \subseteq \bd \bO$. With $\bD\coloneqq \bd \bO \setminus N$ one has that $\bD$ is closed and Ahlfors--David regular, $D\subseteq \bD$ and $\bO$ is locally uniform near $\bN\coloneqq N$. In particular, $\bD \cap \bd O = D$.
\end{proposition}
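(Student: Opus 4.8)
The plan is to build $\bO$ by attaching to $O$ a union of Whitney cubes of the exterior, keeping away from a fixed neighbourhood of $N$ so as not to disturb the geometry there. Concretely, fix a large dimensional constant $M$ (say $M\ge 10\sqrt d$) and a radius $\rho\in(0,\delta/8)$, where $\delta$ is the parameter from Definition~\ref{Def: locally eps-delta} for $O$ near $N$, and write $\cl N$ for the closure of $N$ in $\R^d$. Take a Whitney decomposition $\{Q_k\}_k$ of $U\coloneqq\R^d\setminus\cl O$ into closed dyadic cubes with $\diam Q_k\le\dist(Q_k,\bd O)\le 4\diam Q_k$ and comparable side lengths on neighbouring cubes. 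Call $Q_k$ \emph{admissible} if $\dist(Q_k,\cl N)>\max(M\len(Q_k),\rho)$, put $O_1\coloneqq\Interior\bigl(\bigcup\{Q_k:Q_k\text{ admissible}\}\bigr)$ and $\bO\coloneqq O\cup O_1$, and set $\bD\coloneqq\bd\bO\setminus N$. The topological part is then bookkeeping: since no Whitney cube of $U$ meets $\cl O$, the set $O$ is clopen in $\bO$, so $\bO\setminus O=O_1$ is open, $\bd O\subseteq\bd\bO$, and $\bd\bO=\bd O\cup\bd O_1$; since each admissible cube lies in the closed set $\{\dist(\cdot,\cl N)\ge\rho\}$, so does $\cl{O_1}$, whence $\dist(\cl{O_1},N)\ge\rho$ and $\bd O_1\cap N=\emptyset$. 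Hence $\bD=D\cup\bd O_1$, which gives $D\subseteq\bD$, $\bD\cap\bd O=D$, $\bN=N$, and (as $D$ is closed, $D\cap N=\emptyset$, and $\bd O_1$ is closed and disjoint from $N$) that $\bD$ is closed. What remains are the three quantitative assertions.

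For local uniformity near $N$ I would set $\delta'\coloneqq\min(\rho,\delta)$ and use that on $N_{\delta'}$ one has $\dist(\cdot,\cl N)<\rho$, so $\bO$ agrees with $O$ there and, since $\bd O_1$ avoids $N_\rho\supseteq N_{\delta'}$, also $\bd\bO\cap N_{\delta'}=\bd O\cap N_{\delta'}\subseteq\bd O\cap N_\delta$; thus every $\eps$-cigar for $O$ with respect to $\bd O\cap N_\delta$ is, verbatim, an $\eps$-cigar for $\bO$ with respect to $\bd\bO\cap N_{\delta'}$, and the positive-radius condition carries over since components of $\bO$ touching $N$ coincide near $N$ with components of $O$. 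For interior thickness, given $x\in\bO$ and $r\le 1$: if $\dist(x,\bd O)\ge r/2$ then $\B(x,r/2)\subseteq\bO$, and if $x$ lies in an admissible cube $Q$ with $r\le\len(Q)$ then $|\B(x,r)\cap Q|\ge 2^{-d}|\B(x,r)|$; otherwise $\B(x,r)$ contains a ball $\B(q,c\min(r,\rho))$ about the nearest point $q\in\bd O$, and either $\dist(q,\cl N)\lesssim\rho$ — then $\dist(q,N)\lesssim\rho<\delta/2$, because a nearest point of $\cl N$ to $q$ that is not in $N$ lies in the interface $\cl N\cap D$ and hence has distance $0$ from $N$, so Proposition~\ref{Prop: locally eps-delta yields corkscrew} places a corkscrew ball of $O$ inside — or $\dist(q,\cl N)\gtrsim\rho$ — then all sufficiently small Whitney cubes of $U$ inside $\B(q,\rho/8)$ are admissible, so $\bO\supseteq\B(q,\rho/8)\setminus\bd O$ has full measure, $\bd O$ being null by Corollary~\ref{Cor: Boundary porous}. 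In both cases $|\B(x,r)\cap\bO|\gtrsim\min(r,\rho)^d\gtrsim|\B(x,r)|$ with uniform constants.

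The hard part will be the Ahlfors--David regularity of $\bD=D\cup\bd O_1$. One first describes $\bd O_1$: up to lower-dimensional debris it is a union of $(d-1)$-faces of admissible cubes shared with non-admissible Whitney cubes, a face carried by a cube of side $t$ lying at distance $\approx t$ from $\bd O$ and at distance $\approx\max(Mt,\rho)$ from $\cl N$; in particular its nearest point $q\in\bd O$ satisfies $\dist(q,N)\ge\dist(q,\cl N)\gtrsim\rho>0$, hence $q\in D$. So these faces occur either at bounded scale $t\gtrsim\rho/M$ (finitely many per unit ball, harmless) or at small scale $t\lesssim\rho/M$, in which case they shadow the fixed hypersurface $\Sigma_\rho\coloneqq\{\dist(\cdot,\cl N)=\rho\}$ exactly where it comes within distance $\approx t$ of $\bd O$, i.e.\ within $\approx t$ of the fixed subset $\Sigma_\rho\cap\bd O$ of $D$. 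The upper bound $\cH^{d-1}(\B\cap\bd O_1)\lesssim\rad(\B)^{d-1}$ is then to be obtained by summing face contributions dyadically in $t$, using the porosity of $\bd O$ (Lemma~\ref{Lem: porous covering property}, Corollary~\ref{Cor: Boundary porous}) together with the $(d-1)$-regularity of $D$ to bound the number of scale-$t$ faces meeting a ball of radius $R$ by $\lesssim (R/t)^{d-1}(t/R)^{\theta}$ for some $\theta>0$, so that $\sum_t (R/t)^{d-1}(t/R)^{\theta}t^{d-1}\lesssim R^{d-1}$; securing this gain $\theta>0$ — that is, ruling out a logarithmically divergent accumulation of faces along $\Sigma_\rho$, which is where one genuinely needs that $D$ is a hypersurface rather than merely porous — is the delicate point and the main obstacle of the whole argument. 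The lower bound is comparatively routine: $p\in\bd O_1$ lies on a face of scale $t$, which alone gives $\cH^{d-1}(\B\cap\bd O_1)\gtrsim\rad(\B)^{d-1}$ when $\rad(\B)\lesssim t$, while for $\rad(\B)\gtrsim t$ the point $q\in D$ nearest to $p$ has $|p-q|\lesssim t\le\rad(\B)$, so $\cH^{d-1}(\B\cap\bD)\ge\cH^{d-1}(\B\cap D)\gtrsim\rad(\B)^{d-1}$ by Ahlfors--David regularity of $D$ (Definition~\ref{Def: AD}); points of $D$ are treated by the same estimate. This yields that $\bD$ is Ahlfors--David regular and completes Proposition~\ref{Prop: fat set embedding}, and with it Theorem~\ref{Thm: main kato result}.
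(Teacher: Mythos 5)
Your construction differs from the paper's in a way that is not merely cosmetic. The paper attaches cubes from a \emph{fixed grid} $\Sigma$ of diameter $\delta/8$: it selects exactly those $Q\in\Sigma$ for which $\cl{Q}$ meets $D$ while $Q\cap N_{\delta/4}=\emptyset$, and sets $\bO=O\cup\bigcup_{Q\in\Sigma'}(Q\setminus\bd O)$. You instead attach an admissible sub-collection of a \emph{Whitney decomposition} of $\R^d\setminus\cl O$, filtered by distance to $\cl N$. The topological bookkeeping, the transfer of local uniformity near $N$, and the interior thickness argument all survive in your version (modulo arranging your case-splitting constants consistently with the threshold $\rho$ so that ``$\dist(q,\cl N)\gtrsim\rho$'' actually forces admissibility of all small Whitney cubes near $q$; and note $\dist(q,\cl N)=\dist(q,N)$ holds automatically, so the digression about interface points of $\cl N\cap D$ is unnecessary). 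But the multi-scale nature of Whitney cubes makes the Ahlfors--David regularity of $\bD$ genuinely difficult, and this is exactly where the proposal stops short.

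You flag the gap yourself: to sum the face contributions you need, for each dyadic scale $t$, a bound of the form $\lesssim(R/t)^{d-1-\theta}$ with $\theta>0$ on the number of scale-$t$ interface faces meeting a ball of radius $R$. You do not establish this gain, and the ingredients you invoke (porosity of $\bd O$, $(d-1)$-regularity of $D$) only give the borderline count $(R/t)^{d-1}$, whose sum in $t$ is logarithmically divergent. This is precisely the pile-up of faces along $\Sigma_\rho$ that you describe as ``the main obstacle,'' and it is left unresolved; so as written the proposal has a real gap at the heart of the proposition. The paper's fixed-scale grid sidesteps the issue entirely: every attached cube has side $\approx\delta$, so $\bd O_1$ is a union of $(d-1)$-faces at a single scale with bounded overlap, and the only counting needed is how many grid cubes meet a ball of radius $r$ --- which is $O(1)$ for $r\lesssim\delta$ and $\lesssim(r/\delta)^{d-1}$ for $r\gtrsim\delta$ by the elementary strip estimate of Lemma~\ref{Lem: strip lemma fattening}. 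There is no multi-scale summation and no need for a gain, which is what makes the single-scale construction the right one here.
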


By assumption $O$ is locally an $(\eps,\delta)$-domain near $N$. Let $\Sigma$ denote a grid of open axis-parallel cubes of diameter $\frac{\delta}{8}$ in $\R^d$. Let $\Sigma'$ contain those cubes $Q$ from $\Sigma$ for which $\overline{Q}$ intersects $D$ but which stay away from $N$, say $Q\cap N_{\delta/4} = \emptyset$ for good measure. Now, put
\begin{align}
	\bO \coloneqq O \cup \bigcup_{Q\in \Sigma'} (Q\setminus \bd O) \quad \text{and} \quad \bD \coloneqq \bd \bO \setminus N.
\end{align} 
Clearly $\bO$ is an open superset of $O$. Moreover, $\bD$ is a closed superset of $D$ because $\bO \setminus O$ stays away from $N$ and hence the relative openness of $N$ in $\bd O$ is inherited to $\bd \bO$. We have to verify the following conditions:
\begin{enumerate}[(a)]
	\item $\bO \setminus O$ is open and $\bd O \subseteq \bd \bO$, \label{fat prop 1}
	\item $\bO$ is locally uniform near $\bN$, \label{fat prop 2}
	\item $\bO$ is interior thick, that is, it satisfies~\eqref{ITC}, and \label{fat prop 3}
	\item $\bD$ satisfies the Ahlfors--David condition. \label{fat prop 4}
\end{enumerate}
Then $D \subseteq \bD \subseteq {}^c N$ implies $\bD \cap \bd O = D$.

\textbf{Proof of \ref{fat prop 1}.}
We have $\bO \setminus O = \bigcup_{Q \in \Sigma'} (Q \setminus \cl{O})$, which is an open set. Since $O$ is open as well, $\bd O \subseteq \bd \bO$ follows. \hfill $\square$ \\

\textbf{Proof of \ref{fat prop 2}.}
From the construction of $\Sigma'$ we get $\bO \cap N_{\delta/4} = O \cap N_{\delta/4}$. This already gives the $\eps$-cigar condition with $\delta$ replaced by $\delta/4$. Since $O$ is open and closed in $\bO$, connected components of $O$ are also connected components of $\bO$ and hence satisfy the positive radius condition by assumption. All remaining connected components keep distance to $N$ and are therefore not considered in the positive radius condition. Consequently, $\bO$ is an $(\eps, \frac{\delta}{4})$-domain near $\bN = N$. \hfill $\square$ \\

The boundary $\bd O$ is porous by Corollary~\ref{Cor: Boundary porous}. This implies $|\bd O | = 0$, see \cite[Lem.~A.1]{BE}. Since the cubes in $\Sigma$ are all of the same size, we can also record the following observation. We will freely use these facts from now on.

\begin{lemma}
\label{Lem: A cubes is a good thing}
Each cube $Q \in \Sigma$ is interior thick and has Ahlfors--David regular boundary, where implicit constants depend only on $\delta$ and $d$.
\end{lemma}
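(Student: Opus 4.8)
The plan is to verify both properties by elementary estimates on a single cube, with all constants tracked through its side length $\ell \coloneqq \ell(Q)$; since $\ell\sqrt d = \diam(Q) = \tfrac{\delta}{8}$, this $\ell$, and hence all constants that appear, depend only on $\delta$ and $d$.

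For interior thickness, I would fix $x \in Q$ and $r \le 1$ and use a corner estimate. For each coordinate $k$ the value $x_k$ splits the $k$-th edge of $Q$ into two subintervals, at least one of which has length $\ge \ell/2$; choosing the corresponding signs, one coordinate octant of the ball $\B(x,\rho)$ lies inside $Q$ whenever $\rho \le \ell/2$, which gives $|\B(x,\rho) \cap Q| \ge 2^{-d}|\B(x,\rho)|$. For $r \le \ell/2$ this is exactly \eqref{ITC} with $c = 2^{-d}$. For $\ell/2 < r \le 1$ I would instead bound below by $2^{-d}|\B(x,\ell/2)|$ and above by $|\B(0,1)|$, which yields \eqref{ITC} with constant $(\ell/4)^d$. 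Taking the minimum of the two constants finishes this part.

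For Ahlfors--David regularity of $\bd Q$, I would use that $\bd Q$ is the union of its $2d$ faces, each an isometric copy of a $(d-1)$-cube of side $\ell$, with $\diam(\bd Q) = \diam(Q) = \ell\sqrt d$. The upper bound in \eqref{eq: l-set} follows by covering $\B(x,r) \cap \bd Q$ with $2d$ planar pieces, each contained in a $(d-1)$-dimensional ball of radius $r$, on which $\cH^{d-1}$ is comparable, with constants depending only on $d$, to $(d-1)$-dimensional Lebesgue measure. For the lower bound I would pick a face $F$ of $Q$ through $x$ and rerun the corner estimate inside the $(d-1)$-cube $F$ for radii $\rho \le \ell/2$, extending to all admissible radii $r < \ell\sqrt d$ by comparing with $\rho = \ell/2$ exactly as in the interior-thickness argument. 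Together with $\cH^{d-1}(\B(x,r) \cap \bd Q) \ge \cH^{d-1}(\B(x,r) \cap F)$ this gives the claimed two-sided bound.

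There is no genuine obstacle here; the content is purely the geometry of a cube. The only points requiring care are splitting the range of radii at $r \approx \ell(Q)$ in both parts, and noting that the comparability of $\cH^{d-1}$ with Lebesgue measure on a hyperplane (with a constant depending on the definition of Hausdorff measure) is harmless, since Definition~\ref{Def: AD} and \eqref{ITC} only demand two-sided, respectively one-sided, comparability. Tracking the constants through the two cases confirms that they depend on $Q$ only through $\ell(Q)$, hence only through $\delta$ and $d$.
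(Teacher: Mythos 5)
The paper gives no proof of this lemma --- it is simply recorded as an observation about cubes of a fixed size determined by $\delta$ and $d$ --- so there is no paper argument to compare against. Your elementary octant/quadrant argument is correct and complete: the corner estimate handles small radii, the case split at $r\approx\ell(Q)$ handles the remaining range (using $r\leq 1$ for interior thickness and $r<\diam(\bd Q)=\ell(Q)\sqrt d$ for Ahlfors--David regularity), and all constants reduce to $\delta$ and $d$ through $\ell(Q)\sqrt d=\delta/8$. You also rightly flag that the paper's Hausdorff measure is unnormalized, so comparability (not equality) with $(d-1)$-Lebesgue measure on hyperplanes is all that is needed and all that holds, and that Definition~\ref{Def: AD} and \eqref{ITC} indeed only ask for comparability.
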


\textbf{Proof of \ref{fat prop 3}.}
Let $B$ be a ball of radius $r=\rad(B)\leq 1$ with center $x\in \bO$. If $x\in \cl{Q}$ for some $Q\in \Sigma'$, then $|B \cap \bO|\geq|B \cap Q| \gtrsim r^d$ with implicit constant depending on $\delta$ and $d$. Otherwise, we must have $x\in O$. If additionally $x\in N_{\delta/2}$, then Proposition~\ref{Prop: locally eps-delta yields corkscrew} yields the desired lower bound $|B \cap \bO| \gtrsim r^d$.

It remains to treat the case $x \in O \setminus N_{\delta/2}$. Let $Q'$ be a cube in the grid $\Sigma$ whose closure contains $x$. Again, if $Q' \subseteq O$, then $|B \cap \bO| \geq |B\cap Q'|$ and we are done. If not, then $Q'$ intersects ${}^cO$ and from $x \in O$ we conclude that $Q'$ contains some $z\in \bd O$. By the size of $Q'$ we infer $Q'\cap N_{\delta/4} = \emptyset$. Therefore, we have $z\in D$, which implies $Q'\in \Sigma'$ and we are back in the very first case. \hfill $\square$ \\

We continue with the following 

\begin{lemma} \label{Lem: bD}
	One has $\bD = D \cup \bigcup_{Q\in \Sigma'} (\bd Q\setminus O)$.
\end{lemma}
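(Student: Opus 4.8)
The plan is to prove the set equality $\bD = D \cup \bigcup_{Q\in\Sigma'} (\bd Q\setminus O)$ by a double inclusion, working directly from the definitions $\bO = O \cup \bigcup_{Q\in\Sigma'}(Q\setminus \bd O)$ and $\bD = \bd\bO\setminus N$, together with the facts already recorded: $\bd O\subseteq\bd\bO$, $|\bd O|=0$, and $\bN=N$. First I would establish that the right-hand side is contained in $\bD$. For $D$ this is clear since $D\subseteq\bd O\subseteq\bd\bO$ and $D\cap N=\emptyset$ as $D$ and $N$ partition $\bd O$. For a point $x\in\bd Q\setminus O$ with $Q\in\Sigma'$: such $x$ lies in $\cl O^c$ or on $\bd O$; in either case $x\notin\bO$ (interior points of $O$ are excluded, and $\bd O$ is explicitly removed from each added cube), while every neighbourhood of $x$ meets $Q\setminus\cl O\subseteq\bO$, so $x\in\bd\bO$; and $x\notin N$ because $Q\in\Sigma'$ forces $Q\cap N_{\delta/4}=\emptyset$, hence $\bd Q$ stays at distance $\ge\delta/4$ from $N$ (the grid cubes have diameter $\delta/8$). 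Thus $x\in\bd\bO\setminus N=\bD$.

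For the reverse inclusion, take $x\in\bD=\bd\bO\setminus N$. I would split on whether $x\in\cl O$. If $x\in\cl O$: since $x\notin\bO\supseteq O$, in fact $x\in\cl O\setminus O=\bd O$, and since $x\notin N$ while $\bd O = D\cup N$, we get $x\in D$. If $x\notin\cl O$: then a neighbourhood $U$ of $x$ is disjoint from $\cl O$, so $U\cap\bO = U\cap\bigcup_{Q\in\Sigma'}(Q\setminus\bd O) = U\cap\bigcup_{Q\in\Sigma'}Q$; as $x\in\bd\bO$ every such $U$ meets $\bigcup_{Q\in\Sigma'}Q$ but (shrinking $U$) $x$ is not an interior point of that union. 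Because $\Sigma'$ is locally finite (it is a subfamily of a grid), $x$ then lies on $\bd Q$ for at least one $Q\in\Sigma'$, and $x\notin O$ (indeed $x\notin\cl O$), so $x\in\bd Q\setminus O$.

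The one genuinely delicate point, and the step I expect to be the main obstacle, is the "$x\notin\cl O$" case: one must argue carefully that $x\in\bd\bO$ and $x\notin\Interior(\bigcup_{Q\in\Sigma'}Q)$ together force $x\in\bd Q$ for some single $Q\in\Sigma'$. Local finiteness of the grid handles this — in a small ball around $x$ only finitely many cubes of $\Sigma'$ are relevant, so their union is closed there and its boundary near $x$ equals the union of the finitely many cube boundaries $\bd Q$. Hence $x$ belongs to one of them. A secondary subtlety is ensuring that removing $\bd O$ from each added cube does not destroy the above: since $x\notin\cl O$, near $x$ the sets $Q\setminus\bd O$ and $Q$ agree, so the removal is invisible at $x$ and the argument goes through. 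With these two inclusions combined, the claimed identity follows.
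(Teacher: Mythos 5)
Your overall strategy matches the paper's: the paper first establishes $\bd\bO = \bd O \cup \bigcup_{Q\in\Sigma'}(\bd Q\setminus O)$ by a double inclusion and then intersects both sides with ${}^cN$, whereas you fold that intersection into each direction and prove the $\bD$ identity directly, but the two inclusions you prove are in essence the same ones the paper proves, using the same ingredients (the grid structure, $\bd O\subseteq\bd\bO$, local finiteness).

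There is, however, a genuine error in your forward inclusion. To show $x\in\cl{\bO}$ for $x\in\bd Q\setminus O$, you claim that every neighbourhood of $x$ meets $Q\setminus\cl{O}$. This can fail: nothing in the hypotheses prevents $O$ from being dense. For instance, with $D$ Ahlfors--David regular (hence of Lebesgue measure zero and with empty interior) and $N=\emptyset$, the set $O = \R^d\setminus D$ is a legitimate instance of the theorem, and then $\cl{O}=\R^d$, so $Q\setminus\cl{O}=\emptyset$ for every cube while $\bd Q\setminus O = \bd Q\cap D$ can very well be non-empty. Your argument then proves nothing for such $x$. The correct statement, and the one the paper uses, is that every neighbourhood $U$ of $x\in\bd Q$ meets $Q\setminus\bd O$: indeed $U\cap Q$ is a non-empty open set and the boundary $\bd O$ of an open set has empty interior, so $U\cap Q\not\subseteq\bd O$; since $Q\setminus\bd O\subseteq\bO$ this gives $x\in\cl{\bO}$. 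Replacing $\cl O$ by $\bd O$ throughout this step repairs the proof.

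A smaller point in the same step: your parenthetical justification that $x\notin\bO$ (``interior points of $O$ are excluded, and $\bd O$ is explicitly removed from each added cube'') only covers the sub-case $x\in\bd O$. For $x\notin\cl O$ you also need to rule out $x\in Q'$ for some other $Q'\in\Sigma'$; this follows because $x\in\bd Q$ lies on the grid lattice, which is disjoint from every open grid cube, and this is exactly the observation the paper makes (``Since $\Sigma$ is a grid of open cubes, this implies $x\notin Q$ for every $Q\in\Sigma$''). You do invoke the grid structure in the reverse inclusion, so this is likely just an omission, but as written the step is incomplete. The reverse inclusion itself is fine and is essentially the paper's argument, with local finiteness of the grid replacing the paper's shrinking-ball pigeonhole.
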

\begin{proof}
	We show that
	\begin{align}
	\bd \bO = \bd O \cup \bigcup_{Q\in \Sigma'} (\bd Q \setminus O)
	\end{align}
	since then the lemma follows by intersection with ${}^cN$, taking into account $\dist(Q,N) \geq \frac{\delta}{4}$ for $Q \in \Sigma'$. 

	Let $x\in \bd \bO$. Then $x\not\in O$ since $\bO$ is open and contains $O$. If every neighborhood of $x$ intersects $O$, then $x\in \bd O$.
	
	Otherwise, there is some open ball $B$ with center $x$ that is disjoint to $O$. However, as every neighborhood of $x$ intersects $\bO$ by assumption, there must be some $Q '\in \Sigma'$ such that $B\cap Q' \neq \emptyset$. Since $\Sigma$ is a grid, the ball $B$ only intersects finitely many cubes in $\Sigma'$. We conclude that the sequence of balls $(\frac{1}{n} B)_{n \in \IN}$ hits some cube $Q \in \Sigma'$ infinitely often. Thus, $x$ is in the closure of $Q$. But $x \notin \bO $ and $x \notin \cl{O}$ imply $x \notin Q$, hence we must have $x\in \bd Q \setminus O$. This completes the proof of the inclusion ``$\subseteq$''.
	
	Conversely, let $x \in\bd O \cup \bigcup_{Q\in \Sigma'} (\bd Q\setminus O)$. If $x\in \bd O$, then $x \in \bd \bO$ follows from \ref{fat prop 1}. Otherwise, there is some $Q' \in \Sigma'$ such that $x\in \bd Q' \setminus O$. Since $\Sigma$ is a grid of open cubes, this implies $x \notin Q$ for every $Q \in \Sigma$. Hence, we have $x\not\in \bO$. But each neighborhood of $x$ intersects $Q'$ and since the boundary of an open set has no interior points, it also intersects $Q' \setminus \bd O\subseteq \bO$. Hence, we have $x\in \bd \bO$.
\end{proof}

\textbf{Proof of \ref{fat prop 4}.}
 For the rest of the section let $B$ be some ball with center $x$ in $\bD$ and radius $r=\rad(B)\in (0,\diam(\bD))$. Our task is to show comparability
\begin{align}
\label{eq: ADR for bD}
\cH^{d-1}(\bD \cap B) \approx r^{d-1}
\end{align}
and we organize the argument in the cases coming from Lemma~\ref{Lem: bD}. By the same lemma we have $\diam(\bD) =\infty$ if and only if we have $\diam(D) = \infty$. Hence, we can use the Ahlfors–-David condition for $D$ with balls up to radius say $2\diam(\bD)$, compare with Remark~\ref{Rem: AD}.

\emph{Case 1: $x\in D$}. The lower bound in \eqref{eq: ADR for bD} follows directly from the Ahlfors--David condition for $D$: 
\begin{align}
 \cH^{d-1}(\bD \cap B) \geq \cH^{d-1}(D\cap B) \gtrsim r^{d-1}.
\end{align}
For the upper bound we need to make sure that $B$ does not intersect too many cubes in $\Sigma'$. Consider the subcollection
\begin{align}
\label{eq1: ADR for bD}
	\Sigma^\prime_B \coloneqq \{ Q\in \Sigma': \cl{Q} \cap B \neq \emptyset \}.
\end{align}
If $r\leq \delta$, then $\# \Sigma^\prime_B \lesssim 1$ by the grid size of $\Sigma$ and we obtain
\begin{align}
\label{eq1: fat property 4}
\cH^{d-1}\bigg(\bigcup_{Q\in \Sigma'} \bd Q \cap B \bigg) \leq \sum_{Q\in \Sigma^\prime_B} \cH^{d-1}(\bd Q \cap B) \lesssim (\# \Sigma^\prime_B) r^{d-1} \lesssim r^{d-1},
\end{align}
where the third step uses the upper bound in the Ahlfors--David condition for the boundaries of the cubes $Q \in \Sigma$ as in Lemma~\ref{Lem: A cubes is a good thing}, even though $B$ is not necessarily centered in $\bd Q$. This is not an issue because if $\bd Q \cap B$ is not empty, then $B$ is contained in a ball with doubled radius centered in $\bd Q$.

If $r>\delta$, then we need the following lemma to bound the size of $\Sigma^\prime_B$. Its proof is similar to that of Lemma~\ref{Lem: bdO lemma from EHT}.
\begin{lemma}
\label{Lem: strip lemma fattening}
	Let $x \in D$, $r\in (0,2\diam(\bD))$, $h\in (0,r)$ and consider the set $$E_{r,h} \coloneqq \{y\in \B(x,r): \dist(y, D) \leq h \}.$$ Then $|E_{r,h}| \lesssim h r^{d-1}$, where the implicit constant only depends on $D$ and $d$.
\end{lemma}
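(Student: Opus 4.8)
The plan is to mimic the proof of Lemma~\ref{Lem: bdO lemma from EHT}: cover the part of $D$ that can come close to $\B(x,r)$ by balls of radius $h$, inflate them to radius $2h$ so as to capture all of $E_{r,h}$, and sum up the volumes. The only difference is that the efficiency of the covering will now come from the $(d-1)$-dimensionality encoded in the Ahlfors--David condition for $D$ rather than from mere porosity, and this is exactly what yields the sharp linear factor $h$ instead of a factor $h^{\hat\eta}$ with a possibly small $\hat\eta$.

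Concretely, first I would note that every $y\in E_{r,h}$ satisfies $\dist(y,D)\leq h$, so, as $D$ is closed, there is $y_\partial\in D$ with $|y-y_\partial|\leq h$, and since $h<r$ this point lies in $\B(x,2r)$. Hence it suffices to cover $D\cap\B(x,2r)$ efficiently. To do so I would pick a maximal $h$-separated subset $\{z_1,\dots,z_N\}\subseteq D\cap\B(x,2r)$; this set is nonempty because $x\in D$, and finite because the balls $\B(z_k,h/2)$ are pairwise disjoint and contained in $\B(x,3r)$. Maximality yields $D\cap\B(x,2r)\subseteq\bigcup_k\B(z_k,h)$, and therefore $E_{r,h}\subseteq\bigcup_k\B(z_k,2h)$. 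The crucial point is the count $N\lesssim(r/h)^{d-1}$: applying the Ahlfors--David condition for $D$ (available for all radii up to a fixed multiple of $\diam(\bD)$ by Remark~\ref{Rem: AD} and Lemma~\ref{Lem: bD}, as already observed at the beginning of the proof of \ref{fat prop 4}, and all radii occurring here are $\lesssim r<2\diam(\bD)$) gives $\cH^{d-1}(D\cap\B(z_k,h/2))\gtrsim h^{d-1}$ for each $k$, while the disjointness of these balls together with $\bigcup_k\B(z_k,h/2)\subseteq\B(x,3r)$ gives $N h^{d-1}\lesssim\cH^{d-1}(D\cap\B(x,3r))\lesssim r^{d-1}$.

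Putting this together, $|E_{r,h}|\leq\sum_{k=1}^N|\B(z_k,2h)|\lesssim N h^d\lesssim(r/h)^{d-1}h^d=h\,r^{d-1}$, with an implicit constant depending only on the Ahlfors--David constants of $D$ and on $d$, which is the claim. I do not anticipate a genuine obstacle; the only things to keep track of are the elementary inclusions $y_\partial\in\B(x,2r)$ and $E_{r,h}\subseteq\bigcup_k\B(z_k,2h)$, and the check that every invoked instance of the Ahlfors--David estimate for $D$ is at an admissible scale, both of which are immediate from $h<r<2\diam(\bD)$.
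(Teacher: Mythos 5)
Your proof is correct and follows the same overall strategy as the paper's: cover the part of $D$ near $\B(x,r)$ by roughly $(r/h)^{d-1}$ balls of radius comparable to $h$, inflate them slightly so that they capture all of $E_{r,h}$, and sum volumes. The one substantive difference is how you obtain the covering count. The paper invokes the covering lemma for Ahlfors--David regular sets from \cite[Lemma A.5]{BE} as a black box, whereas you reprove it from scratch: you take a maximal $h$-separated subset $\{z_1,\dots,z_N\}$ of $D\cap\B(x,2r)$, use the \emph{lower} Ahlfors--David bound at scale $h/2$ on each disjoint ball $\B(z_k,h/2)$ together with the \emph{upper} bound on $\B(x,3r)$ to get $N\lesssim(r/h)^{d-1}$, and note that maximality forces $D\cap\B(x,2r)\subseteq\bigcup_k\B(z_k,h)$, hence $E_{r,h}\subseteq\bigcup_k\B(z_k,2h)$. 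This is a self-contained Vitali-type argument in place of a citation, and it makes transparent that the efficient count comes precisely from the $(d-1)$-dimensionality of $D$. A second minor difference: you center your big ball at $x$ itself (radius $2r$, enlarged to $3r$ for the disjointness inclusion), exploiting the hypothesis $x\in D$, whereas the paper fixes an auxiliary point $y'\in E$, passes to its projection $y'_D\in D$ and centers at $y'_D$ with radius $4r$; your choice is slightly cleaner. The scale checks are fine: all radii you use ($h/2$, $3r$) are $\lesssim\diam(\bD)$ by a fixed multiplicative constant, so Remark~\ref{Rem: AD} applies (and the paper itself uses radius $4r$, so this is no more demanding than their own proof).
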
 
\begin{proof}
	For convenience, put $E\coloneqq E_{r,h}$ and fix $y'\in E$. Associate to each $y\in E$ some $y_D\in D$ with $|y-y_D| \leq h$. We claim that $B\coloneqq \B(y'_D, 4r) \supseteq \{y_D: y\in E\}$. Indeed, $$|y_D-y'_D| \leq |y_D-y|+|y-y'|+|y'-y'_D| < h + 2r + h < 4r.$$ Moreover, there is some $C>0$ such that $B\cap D$ can be covered by $C(r/h)^{d-1}$ many balls $B_i$ of radius $h$ centered in $D$, see~\cite[Lemma A.5]{BE}. Next, pick $y\in E$. Then $y_D \in B\cap D$ and therefore $y_D\in B_i$ for some $i\in I$. Thus, $y\in 2 B_i$ and consequently $E\subseteq \bigcup_i 2 B_i$. Finally,
	\begin{align}
		|E|\leq \sum_i |2B_i| \lesssim \#_i h^d \lesssim (r/h)^{d-1} h^d = h r^{d-1}. &\qedhere
	\end{align}
\end{proof}

Coming back to finding a substitute for \eqref{eq1: fat property 4} in the case $r> \delta$, we claim that $\bigcup \Sigma^\prime_B \subseteq E_{2r, \delta/8}$.
Indeed, let $Q\in \Sigma^\prime_B$ and $y\in Q$. Since $\overline{Q}$ intersects $D$, we have $\d(y,D) \leq \diam(Q) = \frac{\delta}{8}$, and by definition of $\Sigma^\prime_B$ in \eqref{eq1: ADR for bD} there is some $z\in \cl{Q}\cap B$ so that $$|x-y| \leq |x-z|+|z-y| \leq r + \diam(Q) \leq r + \frac{\delta}{8} < 2 r.$$ Owing to Lemma~\ref{Lem: strip lemma fattening}, we can now do the following counting argument:
\begin{align}
	(\#\Sigma^\prime_B) \delta^d \approx \bigg|\bigcup \Sigma^\prime_B \bigg| \leq |E_{2r, \delta/8}| \lesssim \delta r^{d-1}.
\end{align}
It follows that $\#\Sigma^\prime_B \lesssim (r/\delta)^{d-1}$, which in turn gives
\begin{align}
	\cH^{d-1}\bigg(\bigcup_{Q\in \Sigma'} \bd Q \cap B\bigg) 
	\leq \sum_{Q \in \Sigma'_B} \cH^{d-1}(\bd Q)
	\approx (\#\Sigma^\prime_B) \delta^{d-1} \lesssim r^{d-1}.
\end{align}
This is the same upper bound as in the case $r< \delta$, see \eqref{eq1: fat property 4}. Hence, in both cases the Ahlfors--David condition for $D$ allows us to estimate
\begin{align}
	\cH^{d-1}(\bD \cap B) \leq \cH^{d-1}(D\cap B) + \cH^{d-1} \bigg(\bigcup_{Q\in \Sigma'} \bd Q \cap B \bigg) \lesssim r^{d-1},
\end{align}
which gives the required upper bound in \eqref{eq: ADR for bD}.

\emph{Case 2: $x\in \bd Q \setminus O$ for some $Q\in \Sigma'$}. We distinguish whether $\frac{1}{2} B$ is disjoint to $D$ or not. So, let us first suppose that $\frac{1}{2} B \cap D \neq \emptyset$ and let $z\in \frac{1}{2} B \cap D$. Then $\B(z, \frac{r}{2})$ is centered in $D$ and contained in $B$, so that we obtain the lower bound from the Ahlfors--David condition for $D$ applied to $\B(z, \frac{r}{2})$. For the upper bound, we use Case~1 applied to $\B(z,2r) \supseteq B$.

Now, suppose that $\frac{1}{2} B \cap D = \emptyset$. By construction of $\Sigma'$ we have $\dist(x,D) \leq \frac{\delta}{8}$ and $\dist(x,N) \geq \frac{\delta}{4}$. Hence $\frac{r}{2} \leq \frac{\delta}{8}$ and $\frac{1}{2}B$ does not intersect $\bd O = D \cup N$. Since this ball is centered outside of $O$ and does not intersect $\bd O$, we must have $\frac{1}{2} B \subseteq {}^c O$. This shows $\bd Q\cap \frac{1}{2} B = (\bd Q \setminus O) \cap \frac{1}{2} B$ and the lower bound in \eqref{eq: ADR for bD} follows from the Ahlfors--David condition for $\bd Q$. For the upper bound we argue as in Case~1 with radii $r \leq \delta$. 

This concludes the proof of \ref{fat prop 4} and hence the proof of Proposition~\ref{Prop: fat set embedding}. \hfill $\square$ \\

\subsection{Proof of Theorem~\ref{Thm: main kato result}}
\label{Subsec: Proof of second main result}

We combine Theorem~\ref{Prop: Regularity decomposition for the square root} with Proposition~\ref{Prop: fat set embedding}.

	Given $O$ and $D$, construct sets $\bO$ and $\bD$ according to Proposition~\ref{Prop: fat set embedding}. To ease the connection with Section~\ref{Sec: Projections coming from indicator functions} we put $O_0\coloneqq O$ and $O_1\coloneqq \bO \setminus O$. Then $D_0 \coloneqq \bD \cap \bd O = D$ and $D_1 \coloneqq \bD \cap \bd O_1 = \bd O_1$. We extend the coefficients $A$, $b$, $c$, $d$ to coefficients $\boldsymbol{A}$, $\boldsymbol{b}$, $\boldsymbol{c}$, $\boldsymbol{d}$ on $\bO$. For $\boldsymbol{A}$ and $\boldsymbol{d}$ we put the corresponding identity matrix on $\bO \setminus O$ to ensure ellipticity and $\boldsymbol{b}$, $\boldsymbol{c}$ are simply extended by zero. With those extended coefficients we define the operator $\bL$ with form domain $\W^{1,2}_{\bD}(\bO)$ as in~\eqref{eq: L as composition}. Since by \eqref{W12D desired splitting} the form domain for $\bL$ splits as
	\begin{align*}
	\W^{1,2}_{\bD}(\bO) \cong 	\W^{1,2}_{D_0}(O_0) \otimes \W^{1,2}_{D_1}(O_1),
	\end{align*}
	we get that the coefficients for $\bL$ are again elliptic in the sense of \eqref{Garding} with lower bound $\min(\lambda,1)$ and upper bound $\max(\Lambda,1)$. Here, $\Lambda$ is an upper bound for the coefficients of $L$.  Proposition~\ref{Prop: similarity L_i and part of L} reveals that $L$ is unitarily equivalent to $\bL \Pro_0^\ast$.
	It was shown in Section~\ref{Sec: AKM} that Theorem~\ref{Thm: main kato result} is valid on interior thick sets. Consequently, we can apply that theorem to the operator $\bL$ on $\bO$. But this brings us into the business of Proposition~\ref{Prop: Regularity decomposition for the square root} and we can conclude the square root property for~$L$. \hfill $\square$ \\
%%%%%%%%%%%%%%%%%%%%%%%%%%%%%%%%%%%%%%%%%%%%%%%%%%%%%%%%%%%%%%%%%%%%%%%%%%%%%%%%%%%%%%%%%%%%%%%%%%%%%%%%%%%%%%%%%%%%%%%%%%%%%%%%%%%%%%%%%%%%%%%%%%%%%%%%%%%%%%%%%%%%
\appendix
\section{Functional calculus for (bi)sectorial operators}
\label{Sec: Functional calculus for (bi)sectorial operators}

We provide the essentials on functional calculi for (bi)sectorial operators that are needed for understanding our paper. The reader who is not familiar with this theory can consult~\cite{McIntosh-Hinfty, Haase} and also \cite{Diss, Hytonen-Vol-II} for the bisectorial case.

For $\omega \in (0, \pi)$ let $\Sec^+_\omega \coloneqq \{z \in \IC \setminus \{0\} : |\arg z| < \omega \}$ be the sector of opening angle $2 \omega$ symmetric about the positive real axis. It will be convenient to set $\Sec_0^+ \coloneqq (0,\infty)$. The bisector of angle $\omega \in [0,\pi/2)$ is defined by $\Sec_{\omega} \coloneqq \Sec^+_{\omega} \cup (-\Sec^+_{\omega})$. 

\begin{definition}
\label{Def: (Bi)sectorial operators}
A linear operator $T$ in a Hilbert space $H$ is \emph{sectorial} of angle $\omega \in [0, \pi)$ if its spectrum $\sigma(T)$ is contained in $\cl{\Sec^+_\omega}$ and if 
\begin{align}
\label{eq: Def bisectorial}
\IC \setminus \cl{\Sec^+_\varphi} \to \cL(H), \qquad z  \mapsto z(z-T)^{-1}
\end{align}
is uniformly bounded for every $\varphi \in (\omega, \pi)$.  \emph{Bisectorial} operators of angle $\omega \in [0, \pi/2)$ are defined similarly upon replacing sectors by bisectors.
\end{definition}

Such operators are automatically closed and densely defined~\cite[Prop.~2.1.1]{Haase}.

\begin{example}
\label{Ex: Bisectorial operators}
Self-adjoint operators are bisectorial of angle $0$, see \cite[Prop.~C.4.2]{Haase}. An operator $T$ is called \emph{maximal accretive} if it satisfies $\|(z+T)^{-1}\| \leq (\Re z)^{-1}$ for all $z \in \IC$ with $\Re z > 0$. This implies sectoriality of angle~$\pi/2$.
\end{example}

Throughout, we denote by $\cM(U)$ and $\H^\infty(U)$ the meromorphic and bounded holomorphic functions on an open set $U\subseteq \IC$, respectively. 

Let $T$ be sectorial of angle $\omega$ and let $\varphi \in (\omega,\pi)$. The construction of its functional calculus starts from the subalgebra $\H^\infty_0(\Sec^+_\varphi)$ of functions $f \in \H^\infty(\Sec^+_\varphi)$ satisfying $|f(z)| \leq C\min(|z|^\alpha, |z|^{-\alpha})$ for some $C,\alpha > 0$ and all $z \in \Sec_\varphi^+$. In this case fix $\nu \in (\omega,\varphi)$ and define
\begin{align}
	f(T)\coloneqq \frac{1}{2\pi i} \int_\gamma f(z)(z-T)^{-1} \d z,
\end{align}
where $\gamma$ is a positively oriented parametrization of $\bd \Sec^+_\nu$. This yields an algebra homomorphism $\H^\infty_0(\Sec^+_\varphi) \to \cL(H)$. The canonical extension to the subalgebra 
\begin{align*}
\cE(\Sec^+_\varphi) \coloneqq \H^\infty_0(\Sec^+_\varphi) \oplus \langle (1+z)^{-1} \rangle \oplus \langle 1 \rangle
\end{align*}
of $\cM(\Sec^+_\varphi)$ is well-defined and again an algebra homomorphism. By regularization this algebra homomorphism extends to an unbounded functional calculus within $\cM(\Sec^+_\varphi)$ as follows. Introduce the algebra
\begin{align}
	\cM(\Sec^+_\varphi)_T \coloneqq \Bigl\{ f\in \cM(\Sec^+_\varphi): \exists \, e\in \cE(\Sec^+_\varphi) \text{ such that } e(T) \text{ is injective and } ef\in \cE(\Sec^+_\varphi) \Bigr\}
\end{align}
and define the closed operator $f(T)$ for $f\in \cM(\Sec^+_\varphi)_T$ by $f(T)\coloneqq e(T)^{-1} (ef)(T)$. This definition does not depend on the choice of the regularizer $e$. The reasonable generalization of the notion of algebra homomorphisms in this context is to have $f(T)+g(T)\subseteq (f+g)(T)$ and $f(T)g(T)\subseteq (fg)(T)$ with equality if $f(T)$ is bounded. Indeed, this is the case for our construction~\cite[Thm.~1.3.2]{Haase}.

\begin{example}
\label{Ex: Fractional powers}
The \emph{fractional powers} $T^\alpha$ for $\Re \alpha > 0$ are defined via the regularizer $(1+z)^{-n}$, where $n$ is an integer larger than $\Re \alpha$. They satisfy the law of exponents $T^\alpha T^\beta = T^{\alpha + \beta}$ and if $T$ is invertible, then so is $T^\alpha$. See~\cite[Prop.~3.1.1]{Haase} for these properties.
\end{example}

\begin{example}
\label{Ex: Hinfty}
Let $f \in \H^\infty(\Sec_\varphi^+)$. If $T$ is injective, then $z(1+z)^{-2}$ regularizes $f$. In general, there is a topological splitting $H = \Ke(T) \oplus \cl{\Rg(T)}$ and the part of $T$ in $\cl{\Rg(T)}$ is an injective sectorial operator of the same angle~\cite[p.~24]{Haase}. Hence, $f(T)$ is always defined as a closed operator in $\cl{\Rg(T)}$ via $f(T) \coloneqq f(T|_{\cl{\Rg(T)}})$.
\end{example}

The calculus for bisectorial operators is constructed in the same way upon systematically replacing sectors by bisectors and $(1+z)^{-1}$ by $(\i+z)^{-1}$. It shares the same properties except that instead of fractional powers one rather considers $(T^2)^\alpha$ for $\Re \alpha > 0$. No ambiguity can occur when writing down such expressions. Indeed, if $T$ is bisectorial, then $T^2$ is sectorial -- and if $f(T^2)$ is defined by the sectorial calculus, then $f(T^2) = (f\circ z^2)(T)$, see \cite[Thm.~3.2.20]{Diss}.

We need the following transformation properties. We include a proof since we could not find the precise statement of (i) in the literature.

\begin{proposition}
\label{Prop: FC transformation}
	Let $T$ be a (bi)sectorial operator in a Hilbert space $H$.
	
	\begin{enumerate}
	\item Let $\Pro$ be an orthogonal projection in $H$ and let $\Pro^\ast: \Pro H \to H$ the inclusion map. Suppose that $\Pro T\subseteq T\Pro$. Then $T\Pro^\ast$ is a (bi)sectorial operator in $\Pro H$ with the same angle and one has
	\begin{align}
		f(T\Pro^\ast)=f(T)\Pro^\ast \quad\text{and}\quad \Pro f(T) \subseteq f(T)\Pro
	\end{align}
	for every $f$ in the functional calculus for $T$.
	
	\item Let $S: H \to K$ be an isomorphism onto another Hilbert space. Then $S^{-1} T S$ is again (bi)sectorial in $K$ with the same angle. It has the same algebra of admissible functions $f$ as $T$ and $f(S^{-1}TS) = S^{-1} f(T) S$ with $\dom(f(S^{-1}TS)) = S^{-1} \dom(f(T))$ holds.
	\end{enumerate}
\end{proposition}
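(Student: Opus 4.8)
The plan is to transfer the resolvent along the given map and then propagate the resulting identity through the three stages of the functional-calculus construction: first on $\H^\infty_0$ via the Cauchy integral, then on the extended algebra $\cE$ by linearity, and finally onto the whole meromorphic calculus by the regularization recipe. I treat (ii) first because it is the cleaner template; (i) is then the same argument together with some bookkeeping for a non-injective map.

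For (ii): since $S$ is an isomorphism, $S^{-1}TS$ has domain $S^{-1}\dom(T)$ and for $z$ outside the relevant (bi)sector one has $(z-S^{-1}TS)^{-1}=S^{-1}(z-T)^{-1}S$, so $\|z(z-S^{-1}TS)^{-1}\|\le\|S\|\,\|S^{-1}\|\,\|z(z-T)^{-1}\|$; hence the same subsectors $\Sec^+_\varphi$, with $\varphi$ larger than the angle $\omega$ of $T$, are admissible and $S^{-1}TS$ is (bi)sectorial of the same angle, with the same algebra of admissible functions (the last point by symmetry, applying the argument to $S^{-1}$). Inserting the resolvent identity into the defining Cauchy integral gives $f(S^{-1}TS)=S^{-1}f(T)S$ for $f\in\H^\infty_0$, and this extends by linearity to $\cE$ using $(\i+S^{-1}TS)^{-1}=S^{-1}(\i+T)^{-1}S$ and $1\mapsto\mathrm{id}$. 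Finally, if $e\in\cE$ regularizes $f$ for $T$, then $e(S^{-1}TS)=S^{-1}e(T)S$ is injective and $ef\in\cE$, so $e$ regularizes $f$ for $S^{-1}TS$ and $f(S^{-1}TS)=e(S^{-1}TS)^{-1}(ef)(S^{-1}TS)=S^{-1}e(T)^{-1}(ef)(T)S=S^{-1}f(T)S$, the domain identity being immediate from $\dom(f(S^{-1}TS))=\{v: (ef)(T)Sv\in\Rg(e(T))\}$.

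For (i): the hypothesis $\Pro T\subseteq T\Pro$ forces $\Pro\dom(T)\subseteq\dom(T)$ and, at the level of bounded operators, $\Pro(z-T)^{-1}=(z-T)^{-1}\Pro$ for every $z\in\rho(T)$. Hence $\Pro$ commutes with $g(T)$ for $g\in\H^\infty_0$ and then for $g\in\cE$, and a regularizer argument upgrades this to $\Pro f(T)\subseteq f(T)\Pro$ for every admissible $f$ (writing $f(T)=e(T)^{-1}(ef)(T)$ and $(ef)(T)u=e(T)w$, one gets $(ef)(T)\Pro u=e(T)\Pro w$, so $\Pro u\in\dom(f(T))$ with $f(T)\Pro u=\Pro w=\Pro f(T)u$); this is already the second identity. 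For the first: the resolvent commutation shows that for $v\in\Pro H$ one has $(z-T)^{-1}v\in\Pro H\cap\dom(T)=\dom(T\Pro^*)$ and $(z-T\Pro^*)^{-1}=\Pro(z-T)^{-1}\Pro^*$, whence $\|z(z-T\Pro^*)^{-1}\|_{\cL(\Pro H)}\le\|z(z-T)^{-1}\|_{\cL(H)}$ on every admissible subsector, so $T\Pro^*$ is (bi)sectorial (of angle not exceeding that of $T$) with dense domain $\Pro H\cap\dom(T)$. Inserting this into the Cauchy integral gives $f(T\Pro^*)=\Pro f(T)\Pro^*=f(T)\Pro^*$ for $f\in\H^\infty_0$ (the last step by the commutation above and $\Pro\Pro^*=\mathrm{id}_{\Pro H}$), which extends to $\cE$; for general $f$ one checks that $e(T\Pro^*)=e(T)\Pro^*$ is injective (as the restriction of the injective $e(T)$ to the $T$-invariant subspace $\Pro H$), so $e$ still regularizes $f$, and then $f(T\Pro^*)=e(T\Pro^*)^{-1}(ef)(T\Pro^*)=f(T)\Pro^*$ with $\dom(f(T\Pro^*))=\Pro\dom(f(T))=\Pro H\cap\dom(f(T))$.

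The routine parts are the Cauchy-integral manipulations and the bisectorial variant (replace sectors by bisectors throughout and $(1+z)^{-1}$ by $(\i+z)^{-1}$). The one genuinely delicate point is the regularization step in (i): one must verify that the formula $f(T\Pro^*)=e(T\Pro^*)^{-1}(ef)(T\Pro^*)$ recovers exactly the restriction of $f(T)$ to $\Pro H$ with no domain lost. This rests on the invariance of $\Pro H$ under all the bounded operators $g(T)$, $g\in\cE$, which makes the conditions ``$(ef)(T)v\in\Rg(e(T))$'' and ``$(ef)(T)v\in\Rg(e(T)|_{\Pro H})$'' equivalent for $v\in\Pro H$.
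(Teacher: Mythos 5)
Your proof is correct and follows essentially the same route as the paper's: transport the resolvent identity along $\Pro^*$ (resp.\ $S$), propagate it through $\H^\infty_0$ and $\cE$, and finally pass to general $f$ by regularization, using that $\Pro H$ (resp.\ the similarity $S$) is compatible with the regularizers. You are a little more explicit than the paper on the domain bookkeeping in the regularization step, but the mechanism is the same.
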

\begin{proof}
	We begin with the proof of (i).
	
	\emph{Step 1: $f = (\lambda-z)^{-1}$ with $\lambda \in \rho(T)$}. The assumption implies $\Pro (\lambda - T) \subseteq (\lambda - T) \Pro$ and hence $\Pro (\lambda - T)^{-1} = (\lambda - T)^{-1} \Pro$. On $\Pro \dom(T)$ we have $\lambda - T = \lambda - T \Pro^\ast$ and on $\Pro H$ we have $(\lambda - T)^{-1} = (\lambda - T)^{-1} \Pro^\ast$. With this at hand, a direct calculation shows that $(\lambda - T)^{-1} \Pro^\ast$ is an operator on $\Pro H$ that acts as a two-sided inverse for $\lambda - T \Pro^\ast$, that is to say, $(\lambda-T\Pro^\ast)^{-1}=(\lambda-T)^{-1}\Pro^\ast$. In particular, $T \Pro^\ast $ is a (bi)sectorial operator in $\Pro H$.
	
	\emph{Step 2}: $f\in \cE(\Sec^+_\varphi)$. Both assertions follow directly by construction and Step 1.
	
	\emph{Step 3}: $f\in \cM(\Sec^+_\varphi)_T$. Let $e$ be a regularizer. By Step 2 we have $e(T\Pro^\ast)=e(T)\Pro^\ast$ and this operator is clearly injective. Then
	\begin{align}
		f(T\Pro^\ast)=e(T\Pro^\ast)^{-1} (ef)(T\Pro^\ast) = (e(T)\Pro^\ast)^{-1}(ef)(T)\Pro^\ast.
	\end{align}
	But this reduces directly to $e(T)^{-1}(ef)(T)\Pro^\ast=f(T)\Pro^\ast$ since $e(T)$ and $(ef)(T)$ preserve $\Pro H$. Moreover, we obtain $\Pro f(T)\subseteq f(T) \Pro$ by the respective inclusions for $e(T)^{-1}$ and $(ef)(T)$ from Step 2.
	
	In the proof of (ii) we directly have $(\lambda - S^{-1}TS)^{-1} = S^{-1}(\lambda - T) S$ for $\lambda \in \rho(T)$ and the rest of the proof follows the same pattern as above.
\end{proof}

\begin{definition}
\label{Def: Hinfty calculus}
Let $T$ be a bisectorial operator of angle $\omega$ in a Hilbert space $H$ and let $\varphi \in (\omega, \pi/2)$. If $f(T)$ is a bounded operator on $\overline{\Rg(T)}$ for all $f \in \H^\infty(\Sec_\varphi)$ and there is a constant $C$ such that the operator norm estimate
\begin{align*}
\|f(T)\|_{\cl{\Rg(T)} \to \cl{\Rg(T)}} \leq C \|f\|_\infty \qquad \mathrlap{(f \in \H^\infty(\Sec_\varphi))}
\end{align*}
holds, then $T$ is said to have a \emph{bounded $\H^\infty$-calculus} of angle $\varphi$ with bound $C$ on $\cl{\Rg(T)}$.
\end{definition}

The following fundamental theorem of McIntosh~\cite{McIntosh-Hinfty} characterizes this property through \emph{quadratic estimates}, see also \cite[Thm.~3.4.11 \& Cor.~3.4.14]{Diss}. 
\begin{theorem}
\label{Thm: McIntosh}
Let $T$ be a bisectorial operator of angle $\omega$ in a Hilbert space $H$ and let $\varphi \in (\omega,\pi/2)$. Then $T$ has a bounded $\H^\infty$-calculus of angle $\varphi$ if and only if
\begin{align}
\label{eq: McIntosh}
\int_0^\infty \|tT(1+t^2T^2)^{-1} u\|_H^2 \; \frac{\d t}{t} \approx \|u\|_H^2 \qquad \mathrlap{(u \in \cl{\Rg(T)}).}
\end{align}
The bound for the $\H^\infty$-calculus depends on $\varphi$ and the implicit constants in \eqref{eq: McIntosh} and \eqref{eq: Def bisectorial}.
\end{theorem}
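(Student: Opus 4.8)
The plan is to follow the classical route via reproducing formulas, off-diagonal estimates, and the coincidence of quadratic with randomised norms on a Hilbert space. First I would reduce to the case that $T$ is injective with dense range on $H$: by the splitting $H=\Ke(T)\oplus\cl{\Rg(T)}$ from Example~\ref{Ex: Hinfty} and since $\psi(tT)$ annihilates $\Ke(T)$ for every $\psi\in\H^\infty_0$, both sides of the equivalence only involve the part of $T$ in $\cl{\Rg(T)}$, which is injective with dense range there. Fix a non-degenerate $\psi\in\H^\infty_0(\Sec_\varphi)$ with $\overline{\psi(\bar z)}=\psi(z)$, for instance a suitable multiple of $z\mapsto z^2(1+z^2)^{-2}$, normalised so that $\int_0^\infty\psi(tz)^2\,\frac{\d t}{t}=1$ for all $z\in\Sec_\varphi$ (the integral is independent of $z$ by contour rotation). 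The convergence lemma for the functional calculus upgrades this to the Calder\'on identity $\int_0^\infty\psi(tT)^2u\,\frac{\d t}{t}=u$ on $\cl{\Rg(T)}$, and likewise for $T^\ast$ (note $\psi(tT)^\ast=\psi(tT^\ast)$). I would also record two facts valid for \emph{any} bisectorial operator: the off-diagonal bound $\|\phi(tT)\eta(sT)\|\lesssim\min(t/s,s/t)^\delta$ for $\phi,\eta\in\H^\infty_0$ and some $\delta>0$, proved by estimating the defining contour integral (constant controlled by $\omega$, $\varphi$ and \eqref{eq: Def bisectorial}), and Schur's lemma for the kernel $\min(t/s,s/t)^\delta$ on $(0,\infty)$ with the measure $\frac{\d t}{t}$. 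Together these show that all square functions built from non-degenerate symbols in $\H^\infty_0$ are mutually comparable, so it is harmless to run the argument with this $\psi$ instead of the symbol $z(1+z^2)^{-1}$ occurring in \eqref{eq: McIntosh}.

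For ``$\Leftarrow$'', assume \eqref{eq: McIntosh}. On $L^2((0,\infty),\frac{\d t}{t};H)$ introduce the analysis map $A\colon u\mapsto(\psi(tT)u)_t$ and the synthesis map $Bg:=\int_0^\infty\psi(tT)g(t)\,\frac{\d t}{t}$, the latter being the adjoint of $v\mapsto(\psi(tT^\ast)v)_t$. Off-diagonal plus Schur gives that $AB$ is bounded, while Calder\'on gives $BA=\Id$ on $\cl{\Rg(T)}$. The upper half of \eqref{eq: McIntosh} says $\|A\|<\infty$ and the lower half says $A$ is bounded below; applied to $u=Bg$ together with boundedness of $AB$ this yields $\|Bg\|\lesssim\|g\|$, i.e.\ the dual quadratic estimate $\int_0^\infty\|\psi(tT^\ast)v\|^2\,\frac{\d t}{t}\lesssim\|v\|^2$. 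Now fix $f\in\H^\infty_0(\Sec_\varphi)$ and $u\in\cl{\Rg(T)}$; Calder\'on gives $f(T)u=\int_0^\infty(f\psi_t)(T)\,\psi(tT)u\,\frac{\d t}{t}$ with $\psi_t:=\psi(t\,\cdot\,)$ and $f\psi_t\in\H^\infty_0$. Pairing with $h\in H$ and applying Cauchy--Schwarz in $t$, the factor $\bigl(\int_0^\infty\|\psi(tT)u\|^2\frac{\d t}{t}\bigr)^{1/2}$ is $\lesssim\|u\|$ by hypothesis, while for $\int_0^\infty\|(f\psi_t)(T)^\ast h\|^2\frac{\d t}{t}$ one takes adjoints (so $(f\psi_t)(T)^\ast$ becomes a function of $T^\ast$ of modulus $\le\|f\|_\infty|\psi(t\,\cdot\,)|$), expands once more by Calder\'on for $T^\ast$, and bounds the result by $\|f\|_\infty^2\|h\|^2$ using off-diagonal, Schur and the dual estimate. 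Taking the supremum over $\|h\|\le1$ gives $\|f(T)u\|\lesssim\|f\|_\infty\|u\|$, with constant depending only on $\varphi$ and the constants in \eqref{eq: McIntosh} and \eqref{eq: Def bisectorial}; the passage from $\H^\infty_0(\Sec_\varphi)$ to $\H^\infty(\Sec_\varphi)$ is the standard regularisation argument.

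For ``$\Rightarrow$'', assume $T$ has a bounded $\H^\infty$-calculus of angle $\varphi$ with bound $C$; then so does $T^\ast$, with the same bound. I would obtain the upper bound in \eqref{eq: McIntosh} by a dyadic square-function estimate: the substitution $t=2^ks$ gives $\int_0^\infty\|\psi(tT)u\|^2\frac{\d t}{t}=\int_1^2\sum_{k\in\IZ}\|\psi(2^ksT)u\|^2\,\frac{\d s}{s}$, so it suffices to bound $\sum_{k}\|\psi_k(T)u\|^2$ with $\psi_k:=\psi(2^k\,\cdot\,)$, uniformly over dyadic dilates. On a Hilbert space $\sum_k\|\psi_k(T)u\|^2=\mathbb E_\varepsilon\bigl\|\sum_k\varepsilon_k\psi_k(T)u\bigr\|^2$ for independent signs $\varepsilon_k$; moreover $g_\varepsilon:=\sum_k\varepsilon_k\psi_k$ is holomorphic on $\Sec_\varphi$ with $\|g_\varepsilon\|_{\H^\infty(\Sec_\varphi)}\le\sup_{z\in\Sec_\varphi}\sum_k|\psi(2^kz)|<\infty$ uniformly in $\varepsilon$ (geometric decay of $\psi$), and $g_\varepsilon(T)u=\sum_k\varepsilon_k\psi_k(T)u$ by the convergence lemma; hence $\|g_\varepsilon(T)u\|\le C\|g_\varepsilon\|_\infty\|u\|$ and the upper bound follows. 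The lower bound then comes from Calder\'on and Cauchy--Schwarz, $\|u\|^2=\int_0^\infty(\psi(tT)u\mid\psi(tT^\ast)u)\,\frac{\d t}{t}\le\bigl(\int_0^\infty\|\psi(tT)u\|^2\frac{\d t}{t}\bigr)^{1/2}\bigl(\int_0^\infty\|\psi(tT^\ast)u\|^2\frac{\d t}{t}\bigr)^{1/2}$, whose last factor is $\lesssim\|u\|$ by the upper bound applied to $T^\ast$.

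The main obstacle, as usual, is the non-self-adjointness of $T$, which prevents any direct appeal to the spectral theorem and forces one to carry $T$ and $T^\ast$ in tandem throughout. Concretely the two substantial points are the derivation of the dual quadratic estimate for $T^\ast$ from the hypothesis on $T$ in ``$\Leftarrow$'' (handled above via $AB$ bounded and $BA=\Id$) and the uniform $\H^\infty$-bound on the randomised symbol $g_\varepsilon$ in ``$\Rightarrow$'', which is the only place where boundedness of the calculus is genuinely invoked; everything else — the reproducing formulas, the off-diagonal and Schur estimates, and the regularisation from $\H^\infty_0$ to $\H^\infty$ — is routine once this structure is set up.
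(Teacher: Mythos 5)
The paper does not prove this theorem; it is imported by citation to McIntosh's original article \cite{McIntosh-Hinfty} and to \cite[Thm.~3.4.11 \& Cor.~3.4.14]{Diss}, so there is no in-paper proof to compare against. Your reconstruction is the classical McIntosh argument (Calder\'on reproducing formula, off-diagonal bounds for $\phi(tT)\eta(sT)$ plus Schur's test, the analysis/synthesis duality $BA=\Id$ with $AB$ bounded, and randomization to exploit the Hilbertian identity $\sum_k\|\cdot\|^2=\mathbb E_\varepsilon\|\sum_k\varepsilon_k\cdot\|^2$), and it is essentially correct. The two delicate points you flag are indeed the substantive ones: the derivation of the dual square-function estimate for $T^\ast$ in ``$\Leftarrow$'' (your route via $\|Bg\|\lesssim\|ABg\|\lesssim\|g\|$ is the right one, and then $B^\ast$ is the analysis map for $T^\ast$), and the uniform $\H^\infty$-bound on $g_\varepsilon=\sum_k\varepsilon_k\psi(2^ks\,\cdot\,)$ in ``$\Rightarrow$'', where the partial sums lie in $\H^\infty_0$ and the convergence lemma upgrades to the infinite sum on $\cl{\Rg(T)}$. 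Two small points worth making explicit: (1) the passage from the specific symbol $z(1+z^2)^{-1}$ in \eqref{eq: McIntosh} to your $\psi$ rests on $z(1+z^2)^{-1}$ admitting a Calder\'on partner in $\H^\infty_0$ (it does), so that the two-sided comparison of square functions really does transfer; and (2) the hypothesis $\overline{\psi(\bar z)}=\psi(z)$ is used in the lower bound of ``$\Rightarrow$'' to rewrite $(\psi(tT)^2u\mid u)=(\psi(tT)u\mid\psi(tT^\ast)u)$, and in passing to $T^\ast$ one should note that $f(T)^\ast=f^\sharp(T^\ast)$ with $f^\sharp(z)=\overline{f(\bar z)}$ and $\|f^\sharp\|_\infty=\|f\|_\infty$, so the calculus bound for $T^\ast$ is literally the same constant. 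With those spelled out, the argument is complete and gives the stated dependence of the constants.
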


\begin{example}
\label{Ex: sa implies Hinfty}
Self-adjoint operators have a bounded $\H^\infty$-calculus by the spectral theorem. One can also check quadratic estimates in an elementary manner~\cite[Ex.~3.4.15]{Diss}.
\end{example}

Bisectorial operators with a bounded $\H^\infty$-calculus satisfy the following abstract square root estimate~\cite[Prop.~3.3.15]{Diss}. This essentially follows from considering the bounded operator $(z/\sqrt{z^2})(T)$ and its inverse on $\cl{\Rg(T)}$. Hence, the norm bounds are
explicit.
\begin{example}
\label{Ex: Abstract Kato}
Let $T$ be a bisectorial operator in a Hilbert space $H$ with a bounded $\H^\infty$-calculus of angle $\varphi$ and bound $C$. It follows that $\dom(\sqrt{T^2}) = \dom(T)$ with comparability
\begin{align*}
C^{-1} \|Tu\|_H \leq \|\sqrt{T^2}u\|_H \leq C \|Tu\|_H \qquad \mathrlap{(u \in \dom(T)).}
\end{align*}
\end{example}
%%%%%%%%%%%%%%%%%%%%%%%%%%%%%%%%%%%%%%%%%%%%%%%%%%%%%%%%%%%%%%%%%%%%%%%%%%%%%%%%%%%%%%%%%%%%%%%%%%%%%%%%%%%%%%%%%%%%%%%%%%%%%%%%%%%%%%%%%%%%%%%%%%%%%%%%%%%%%%%%%%%%

\end{document}